\newtheorem{theorem}{Theorem}[section]
\newtheorem{lemma}[theorem]{Lemma}
\newtheorem{corollary}[theorem]{Corollary}
\theoremstyle{definition}
\newtheorem{definition}[theorem]{Definition}
\newtheorem{example}[theorem]{Example}
\theoremstyle{remark}
\newtheorem{remark}[theorem]{Remark}
\newcommand{\dz}{\wedge}
\newcommand{\ba}{\begin{array}}
\newcommand{\ea}{\end{array}}
\newcommand{\beq}{\begin{eqnarray}}
\newcommand{\eeq}{\end{eqnarray}}
\newcommand{\der}{{\rm d}}
\DeclareMathSymbol{\minu}{\mathbin}{AMSa}{"39}
\numberwithin{equation}{section}
\def\frak{\mathfrak}
\newcommand{\bbC}{\mathbb{C}}
\newcommand{\hook}{\raisebox{-0.35ex}{\makebox[0.6em][r]
{\scriptsize $-$}}\hspace{-0.15em}\raisebox{0.25ex}{\makebox[0.4em][l]{\tiny
 $|$}}}
\newcommand{\bma}{\begin{pmatrix}}
\newcommand{\ema}{\end{pmatrix}}
\def\bbZ{{\mathbb{Z}}}
\newcommand{\Span}{\mathrm{Span}}
\def\bean{\begin{eqnarray}}
\def\eean{\end{eqnarray}}
\def\bea{\begin{eqnarray*}}
\def\eea{\end{eqnarray*}}
\def\be{\begin{equation}}
\def\ee{\end{equation}}
\def\beq*{\begin{equation*}}
\def\eeq*{\end{equation*}}
\def\bal{\begin{align*}}
\def\eal{\end{align*}}
\def\baln{\begin{align}}
\def\ealn{\end{align}}
\def\beg{\begin{gather*}}
\def\eng{\end{gather*}}
\def\bqu{\begin{question}}
\def\equ{\end{question}}
\newcommand{\roa}{{\stackrel{\scriptscriptstyle{a}}{\rho}}{}}
\newcommand{\bbR}{\mathbb{R}}
\newcommand{\sog}{\mathbf{SO}}
\newcommand{\glg}{\mathbf{GL}}
\newcommand{\spg}{\mathbf{Sp}}
\newcommand{\soa}{\frak{so}}
\newcommand{\spa}{\frak{sp}}
\newcommand{\gla}{\frak{gl}}
\newcommand{\sla}{\frak{sl}}
\begin{document}
\title[Exceptional geometries]{Exceptional simple real Lie algebras $\mathfrak{f}_4$ and $\mathfrak{e}_6$ via contactifications}
\vskip 1.truecm
%\author{C. Denson Hill} \address{Department of Mathematics, Stony Brook University, Stony Brook NY 11794, USA}
%\email{Dhill@math.stonybrook.edu}
%\author{Zhaohu Nie} \address{Department of Mathematics and Statistics, Utah State University, Logan, UT 84322-3900, USA}
%\email{zhaohu.nie@usu.edu}
\author{Pawe\l~ Nurowski} \address{Center for Theoretical Physics,
Polish Academy of Sciences, Al. Lotnik\'ow 32/46, 02-668 Warszawa, Poland}
\email{nurowski@cft.edu.pl}
\thanks{The research was funded from the Norwegian Financial Mechanism 2014-2021 with project registration number 2019/34/H/ST1/00636.}

\date{\today}
\begin{abstract}
  In Cartan's PhD thesis, there is a formula defining a certain rank 8 vector distribution in dimension 15, whose algebra of authomorphism is the split real form of the simple exceptional complex Lie algebra $\mathfrak{f}_4$. Cartan's formula is written in the standard Cartesian coordinates in $\bbR^{15}$. In the present paper we explain how to find analogous formula for the flat models of any bracket generating distribution $\mathcal D$ whose symbol algebra $\mathfrak{n}({\mathcal D})$ is constant and 2-step graded, $\mathfrak{n}({\mathcal D})=\mathfrak{n}_{-2}\oplus\mathfrak{n}_{-1}$.

  The formula is given in terms of a solution to a certain system of linear algebraic equations determined by two representations $(\rho,\mathfrak{n}_{-1})$ and $(\tau,\mathfrak{n}_{-2})$ of a Lie algebra $\mathfrak{n}_{00}$ contained in the $0$th order Tanaka prolongation $\mathfrak{n}_0$ of $\mathfrak{n}({\mathcal D})$.

  Numerous examples are provided, with particular emphasis put on the distributions with symmetries being real forms of simple exceptional Lie algebras $\mathfrak{f}_4$ and $\mathfrak{e}_6$.
  
\end{abstract}
\maketitle
%*************
%\vspace{-1truecm}
\tableofcontents
\newcommand{\gat}{\tilde{\gamma}}
\newcommand{\Gat}{\tilde{\Gamma}}
\newcommand{\thet}{\tilde{\theta}}
\newcommand{\Thet}{\tilde{T}}
\newcommand{\rt}{\tilde{r}}
\newcommand{\st}{\sqrt{3}}
\newcommand{\kat}{\tilde{\kappa}}
\newcommand{\kz}{{K^{{~}^{\hskip-3.1mm\circ}}}}
\newcommand{\di}{{\rm div}}
\newcommand{\curl}{{\rm curl}}
\newcommand{\tn}{{\mathcal N}}
\newcommand{\ten}{{\Upsilon}}
\newcommand{\invol}[2]{\draw[latex-latex] (root #1) to
  [out=-30,in=-150] (root #2);}
\newcommand{\invok}[2]{\draw[latex-latex] (root #1) to
[out=-90,in=-90] (root #2);}
%*************
\section{Introduction: the notion of a contactification}\label{intr}
A \emph{contact structure} $(M,{\mathcal D})$ on a $(2n+1)$ dimensional real manifold $M$ is usually defined in terms of a 1-form $\lambda$ on $M$ such that
$$\underbrace{\der\lambda\dz\der\lambda\dz\dots\dz\der\lambda}_{n\,\,\mathrm{times}}\dz\lambda\neq 0$$
at each point $x\in M$. Given such a 1-form, the contact structure $(M,{\mathcal D})$ on $M$ is the rank $s=2n$ \emph{vector distribution}
$${\mathcal D}=\{X\in \mathrm{T}M\,\,\mathrm{s.t.}\,\, X\hook\lambda=0\}.$$
Note that any $\lambda'=a\lambda$, with $a$ being a nonvanishing function on $M$, defines the same contact structure $(M,{\mathcal D})$. We also note that given a contact structure $(M,{\mathcal D})$, we additionally have a family of 2-forms on $M$
$$\omega'=a\omega +\mu\dz\lambda,\quad\mathrm{with}\quad \omega=\der\lambda,$$
where $a\neq 0$ is a function, and $\mu$ is a 1-form on $M$. This, in particular, means that given a contact structure $(M,{\mathcal D})$, we have a rank $s=2n$ (bracket generating) distribution ${\mathcal D}$, and a \emph{line} of a  closed 2-form $\omega$ \emph{in the distribution} ${\mathcal D}$, with
$$\der\omega=0\quad\&\quad\underbrace{\omega\dz\omega\dz\dots\dz\omega}_{n\,\,\mathrm{times}}\neq 0.$$

This can be compared with the notion of a \emph{symplectic} structure $(N,[\omega])$ on a $s=2n$ dimensional real manifold $N$. Such a structure is defined in terms of a line $\omega'=h\omega$ of a nowhere vanishing 2-form $\omega$ on $N$, such that
$$\der\omega=0\quad\&\quad\underbrace{\omega\dz\omega\dz\dots\dz\omega}_{n\,\,\mathrm{times}}\neq 0.$$
Here, contrary to the contact case, we have a \emph{ line} of a closed 2-form $\omega$ \emph{in the tangent space} $\mathrm{T}N$ rather than in the proper vector subbundle ${\mathcal D}\subsetneq\mathrm{T}N$.

By the \emph{Poincar\'e lemma}, locally, in an open set ${\mathcal O}\subset N$, the form $\omega$ defines a 1-form $\Lambda$ on $N$ such $\der\Lambda=\omega$. Therefore given a symplectic structure $(N,[\omega])$, we can locally \emph{contactify} it, by considering a $(2n+1)$ dimensional manifold $${\mathcal U}=\bbR\times{\mathcal O}\stackrel{\pi}{\to}\mathcal O,$$ with a 1-form $$\lambda=\der u+\pi^*(\Lambda) $$
on $\mathcal U$; here the real variable $u$ is a coordinate along the $\bbR$ factor in $\mathcal U=\bbR\times\mathcal O$.  As a result the structure $(M,{\mathcal D})=\big({\mathcal U},\ker(\lambda)\big)$ is a \emph{contact structure}, called a \emph{contact structure associated with the symplectic structure} $(N,[\omega])$.
\vspace{0.3cm}

We introduce the notion of a \emph{contactification} as a generalization of the above considerations.  

\begin{definition}\label{def1}
  Let $N$ be an $s$-dimensional manifold and let
  $\der{\mathcal D}^\perp:=\Span(\omega^1,\omega^2,$ $\dots,\omega^r)$ be a rank $r$ subbundle of $\bigwedge^2N$. Consider an $(s+r)$-dimensional fiber bundle $F\to M\stackrel{\pi}{\to}N$ over $N$. Let $(X_1,X_2,\dots, X_r)$ be a coframe of \emph{vertical vectors} in $M$. In particular we have $\pi_*(X_i)=0$ for all $i=1,2,\dots,r$.

  Let us assume that on $M$ there exist $r$ one-forms $\lambda^i$, $i=1,2,\dots,r$, such that
 $\det (X_i\hook\lambda^j)\neq 0$ on  $M$, 
   and that 
  $\der\lambda^i=\sum_{j=1}^r a^i{}_j\pi^*(\omega^j)+\sum_{j=1}^r\mu^i{}_j\dz\lambda^j$ for all $i=1,2,\dots r$,  
   with some 1-forms $\mu^i{}_j$ and some functions $a^i{}_j$ on $M$ satisfying $\mathrm{det}(a^i{}_j)\neq 0$. Consider the corresponding rank $s$ distribution
  ${\mathcal D}=\{TM\ni X~|~ X\hook\lambda^i=0, i=1,2,\dots r\}$ 
on $M$.

Then the pair $(M,{\mathcal D})$ is called a \emph{contactification} of the pair $(N,\der {\mathcal D}^\perp)$. 
\end{definition}

\begin{definition}
  A real Lie algebra $\mathfrak{g}$ spanned over $\bbR$ by the vector fields $Y$ on $M$ of the contactification $(M,{\mathcal D})$ satisfying
  \be{\mathcal L}_Y\lambda^i\dz\lambda^1\dz\dots\dz\lambda^r=0, \quad\forall i=1,2,\dots,r\label{ssymm}\ee
  is called the Lie algebra of infinitesimal symmetries of the contactification $(M,{\mathcal D})$. By definition, it is the same as the Lie algebra of infinitesimal symmetries of the distribution ${\mathcal D}$ on $M$. The vector fields $Y$ on $(M,{\mathcal D})$ satisfying \eqref{ssymm} are called infinitesimal symmetries of $(M,{\mathcal D})$, or of $\mathcal D$, for short.\label{def2}
  \end{definition}
Below, we give a nontrivial example of the notions included in Definitions \ref{def1} and \ref{def2}.
\begin{example}\label{exa4}
  Consider $N=\bbR^8$ with Cartesian coordinates $(x^1,x^2,$ $x^{3},x^{4},x^{5},x^{6},$ $x^{7},x^{8})$, and a space $\der{\mathcal D}^\perp=\Span(\omega^1,\omega^2,\omega^3,\omega^4,\omega^5,\omega^6,\omega^7)\subset\bigwedge^2N$, which is spanned by the following seven 2-forms on $N$:
  $$
  \begin{aligned}
    \omega^1=\,\,&\der x^1\dz\der x^{8}+\der x^2\dz\der x^{5}+\der x^{3}\dz\der x^{7}+\der x^{4}\dz\der x^{6}\\
    \omega^2=\,\,&-\der x^1\dz\der x^{5}+\der x^2\dz\der x^{8}+\der x^{3}\dz\der x^{6}-\der x^{4}\dz\der x^{7}\\
    \omega^3=\,\,&-\der x^1\dz\der x^{7}-\der x^2\dz\der x^{6}+\der x^{3}\dz\der x^{8}+\der x^{4}\dz\der x^{5}\\
    \omega^4=\,\,&\der x^1\dz\der x^{2}+\der x^{3}\dz\der x^{4}+\der x^{5}\dz\der x^{8}+\der x^{6}\dz\der x^{7}\\
    \omega^5=\,\,&-\der x^1\dz\der x^{6}+\der x^2\dz\der x^{7}-\der x^{3}\dz\der x^{5}+\der x^{4}\dz\der x^{8}\\
    \omega^6=\,\,&\der x^1\dz\der x^{4}+\der x^2\dz\der x^{3}-\der x^{5}\dz\der x^{7}+\der x^{6}\dz\der x^{8}\\
    \omega^7=\,\,&\der x^1\dz\der x^{3}-\der x^2\dz\der x^{4}+\der x^{5}\dz\der x^{6}+\der x^{7}\dz\der x^{8}.
  \end{aligned}
$$
%\marginpar{\emph{perhaps it is good to mutually interchange indices $4$ and $5$, $4\leftrightarrow 5$.}} 
  As the bundle $N$ take $M=\bbR^{7}\times\bbR^8\to N$ with coordinates $(x^1,\dots,x^8,x^9\dots,x^{15})$,  and take seven 1-forms
  $$\begin{aligned}
    \lambda^1=\,\,&\der x^9+ x^1\der x^{8}+ x^2\der x^{5}+ x^{3}\der x^{7}+ x^{4}\der x^{6}\\
    \lambda^2=\,\,&\der x^{10} - x^1\der x^{5}+ x^2\der x^{8}+ x^{3}\der x^{6}- x^{4}\der x^{7}\\
    \lambda^3=\,\,&\der x^{11} - x^1\der x^{7}- x^2\der x^{6}+ x^{3}\der x^{8}+ x^{4}\der x^{5}\\
    \lambda^4=\,\,&\der x^{12} + x^1\der x^{2}+ x^{3}\der x^{4}+ x^{5}\der x^{8}+ x^{6}\der x^{7})\\
    \lambda^5=\,\,&\der x^{13}- x^1\der x^{6}+ x^2\der x^{7}- x^{3}\der x^{5}+ x^{4}\der x^{8}\\
    \lambda^6=\,\,&\der x^{14}+ x^1\der x^{4}+ x^2\der x^{3}- x^{5}\der x^{7}+ x^{6}\der x^{8}\\
    \lambda^7=\,\,&\der x^{15}+ x^1\der x^{3}- x^2\der x^{4}+ x^{5}\der x^{6}+ x^{7}\der x^{8}.
  \end{aligned}
  $$
  This defines a rank 8 distribution ${\mathcal D}=\{TM\ni X~|~ X\hook\lambda^i=0, i=1,2,\dots 7\},$ on $M$.  
The pair $\big(M,{\mathcal D}\big)$ \emph{is a contactification of} $(N,d{\mathcal D}^\perp)$, since
  $X_i=\partial_{i+8}$, $\det(X_i\hook\lambda^j)=1$, and $\der\lambda^i=\omega^i$ for all $i=1,\dots,7$.
  In particular, in this example the rank 8 distribution
    ${\mathcal D}$ gives a \emph{2-step filtration} ${\mathcal D}_{-1}\subset\mathcal{D}_{-2}=\mathrm{T}M$, where ${\mathcal D}_{-1}={\mathcal D}$ and $\mathcal{D}_{-2}=[{\mathcal D}_{-1},{\mathcal D}_{-1}]=\mathrm{T}M$.  
\end{example}

This example is essentially taken from \`Elie Cartan's PhD thesis \cite{CartanPhd}, actually its German version. We took it as our example inspired by the following quote from Sigurdur Helgason \cite{He}: 
\begin{quote}
  Cartan represented [the simple exceptional Lie group] ${\bf F}_4$ (...) by the Pfaffian system in $\bbR^{15}$ (...).  Similar results for ${\bf E}_6$ in $\bbR^{16}$, ${\bf E}_7$ in $\bbR^{27}$ and ${\bf E}_8$ in $\bbR^{29}$ are indicated in \cite{CartanPhd}. Unfortunately, detailed proofs of these remarkable representations of the exceptional groups do not seem to be available.
\end{quote}

The 15-dimensional contactification $(M,{\mathcal D})$ from our Example \ref{exa4} is obtained in terms of the seven 1-forms $\lambda^i$, which are equivalent to the seven forms from the Cartan Pfaffian system in dimension 15 mentioned by Helgason. In particular, it follows that the \emph{distribution structure} $(M,{\mathcal D})$ has the simple exceptional Lie group ${\bf F}_4$, actually its \emph{real form} $F_I$ in the terminology of \cite{CS}, as a \emph{group of authomorphism}.

In this paper we will explain how one gets this realization of the exceptional Lie group ${\bf F}_4$, a realization of its real form $F_{II}$, and realizations of the two (out of 5) real forms $E_I$ and $E_{IV}$ of the complex simple exceptional Lie group ${\bf E}_6$. For this explanation we need some preparations consisting of recalling few notions associated with vector distributions on manifolds and spinorial representations of the orthogonal groups in space of \emph{real} spinors.

Finally we note that our approach in this paper is \emph{purely utilitarian}. We answer the question: \emph{How to get the explicit formulas in Cartesian coordinates for Pfaffian forms $(\lambda^1,\dots,\lambda^r)$, which have simple Lie algebras as symmetries?} One can study more general problems related to this on purely Lie theoretical ground. For example, one can ask when a 2-step graded nilpotent Lie algebra $\mathfrak{n}_{\minu}=\mathfrak{n}_{-2}\oplus\mathfrak{n}_{-1}$ has a given Lie algebra $\mathfrak{n}_{00}$ as a part of its Lie algebra $\mathfrak{n}_0$ of derivations preserving the strata, or a question as to  when the Tanaka prolongation of such $\mathfrak{n}_{\minu}$ with $\mathfrak{n}_{00}\subset\mathfrak{n}_0$ is finite, or simple. This is beyond the scope of our paper. A reader interested in such problems may consult e.g. \cite{AC,Alt,Krug}.
\section{Magical equation for a contactification}
The purpose of this section is to prove the following crucial lemma, about a certain algebraic equation, which we call a \emph{magical equation}. It is the boxed equation \eqref{maga} below. 

\begin{lemma}\label{l21}
  Let $(\mathfrak{n}_{00},[\cdot,\cdot]_0)$ be a finite dimensional Lie algebra, and let $\rho:\mathfrak{n}_{00}\stackrel{\mathrm{hom}}{\to} \mathrm{End}(S)$ be its finite dimensional representation in a real vector space $S$ of dimension $s$.
In addition, let $R$ be an $r$-dimensional real vector space, and $\tau:\mathfrak{n}_{00}\to \mathrm{End}(R)$, be a linear map.
Finally let $\omega$ be a linear map $\omega:\bigwedge^2S\to R$, or what is the same, let $\omega\in\mathrm{Hom}(\bigwedge^2S,R)$.

  Suppose now that the triple $(\rho,\omega,\tau)$ satisfy the following equation:
  \be\boxed{
  \omega\big(\rho(A)X,Y\big)+\omega\big(X,\rho(A)Y\big)=\tau(A)\,\omega(X,Y),}
\label{maga}
  \ee
  for all $A\in\mathfrak{n}_{00}$ and all $X,Y\in S$.
  Then we have:
  \begin{enumerate}
  \item The map $\tau$ satisfies $$\big(\,\,\tau([A,B]_0)-[\tau(A),\tau(B)]_{\mathrm{End}(R)}\,\,\big)\omega\,\,=\,\,0\quad\quad \forall\,\,A,B\in\mathfrak{n}_{00}.$$ 
  \item If the map $\tau:\mathfrak{n}_{00}\to \mathrm{End}(R)$ is a representation of $\mathfrak{n}_{00}$, i.e. if $$\tau([A,B]_0)=[\tau(A),\tau(B)]_{\mathrm{End}(R)},$$ then the real vector space $\mathfrak{g}_0:=R\oplus S\oplus\mathfrak{n}_{00}$ is a \emph{graded} Lie algebra
$$\mathfrak{g}_0=\mathfrak{n}_{-2}\oplus\mathfrak{n}_{-1}\oplus\mathfrak{n}_{00},$$
    with the graded components $$\mathfrak{n}_{-2}=R,\quad \mathfrak{n}_{-1}=S,\quad\mathrm{with}\,\, \mathfrak{n}_{00}\,\,\mathrm{as\,\,the}\,\,0\,\,\mathrm{grade},$$ and with the Lie bracket $[\cdot,\cdot]$ given by:\label{ca2}
          \begin{enumerate}
          \item if $X,Y\in \mathfrak{n}_{00}$ then $[X,Y]=[X,Y]_0$,
          \item if $A\in \mathfrak{n}_{00}$, $X\in \mathfrak{n}_{-1}$ then $[A,X]=\rho(A)X$,
          \item if $A\in \mathfrak{n}_{00}$, $X\in \mathfrak{n}_{-2}$ then $[A,X]=\tau(A)X$,
          \item $[\mathfrak{n}_{-1},\mathfrak{n}_{-2}]=[\mathfrak{n}_{-2},\mathfrak{n}_{-2}]=\{0\}$,
          \item and, if $X,Y\in \mathfrak{n}_{-1}$ then $[X,Y]=\omega(X,Y)$. %In particular, on the basis $\{f\}$ in $S$, we have $[f_\mu,f_\nu]_{\mathfrak{g}_0}=\omega^i{}_{\mu\nu}e_i$.
          \end{enumerate}
        \item Moreover, in the case {\rm \eqref{ca2}} the Lie subalgebra 
          $$\mathfrak{n}_\minu=\mathfrak{n}_{-2}\oplus\mathfrak{n}_{-1}$$ of $\mathfrak{g}_0$ 
          is a 2-step graded Lie algebra, and the algebra $\mathfrak{n}_{00}$ is a Lie subalgebra of the Lie algebra $$\mathfrak{n}_0=\big\{\,\mathrm{Der}(\mathfrak{n}_\minu)\ni D\,\,\mathrm{s.t.}\,\,D\mathfrak{n}_j\subset\mathfrak{n}_j\,\,\mathrm{for}\,\, j=-1,-2\,\big\}$$ of all derivations of $\mathfrak{n}_\minu$ preserving its strata $\mathfrak{n}_{-1}$ and $\mathfrak{n}_{-2}$.
            \end{enumerate}
\end{lemma}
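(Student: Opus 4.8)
The plan is to derive all three assertions from the magical equation \eqref{maga} by formal manipulations; the one structural point to keep in mind is that \eqref{maga} simultaneously \emph{is} the mixed Jacobi identity of the prospective bracket on $\mathfrak g_0$ and the Leibniz rule of the prospective derivations over $\omega$. For part (1) I would fix arbitrary $X,Y\in S$ and compute $\tau([A,B]_0)\,\omega(X,Y)$ and $[\tau(A),\tau(B)]_{\mathrm{End}(R)}\,\omega(X,Y)$ separately. Applying \eqref{maga} with $[A,B]_0$ in place of $A$ and substituting $\rho([A,B]_0)=\rho(A)\rho(B)-\rho(B)\rho(A)$ writes the first as the four-term expression $\omega(\rho(A)\rho(B)X,Y)-\omega(\rho(B)\rho(A)X,Y)+\omega(X,\rho(A)\rho(B)Y)-\omega(X,\rho(B)\rho(A)Y)$. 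For the second I would peel off the inner $\tau$ using \eqref{maga}, then apply \eqref{maga} again to each of the arguments $(\rho(B)X,Y)$ and $(X,\rho(B)Y)$, do the same with $A$ and $B$ swapped, and subtract; the two cross terms $\omega(\rho(A)X,\rho(B)Y)$ and $\omega(\rho(B)X,\rho(A)Y)$ cancel, leaving exactly the same four-term expression. Hence $\tau([A,B]_0)-[\tau(A),\tau(B)]$ annihilates the image of $\omega$, which is the claim.

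For part (2), assuming $\tau$ a representation, I would first record that the prescribed bracket is bilinear, antisymmetric --- antisymmetry on $\mathfrak n_{-1}\times\mathfrak n_{-1}$ being precisely that $\omega\in\mathrm{Hom}(\bigwedge^2S,R)$ --- and compatible with the grading, all visible directly from the bracket rules (a)--(e). The real content is the Jacobi identity, checked on homogeneous triples sorted by total degree. A triple inside $\mathfrak n_{00}$: Jacobi for $[\cdot,\cdot]_0$. A triple with two entries in $\mathfrak n_{00}$ and one in $\mathfrak n_{-1}$ (resp. $\mathfrak n_{-2}$): expanding the three cyclic terms, the identity collapses to $\rho$ (resp. $\tau$) being a homomorphism. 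The decisive case is two entries $X,Y\in\mathfrak n_{-1}$ and one entry $A\in\mathfrak n_{00}$: using rule (e) and the antisymmetry of $\omega$, the cyclic sum works out to $\omega(\rho(A)X,Y)+\omega(X,\rho(A)Y)-\tau(A)\omega(X,Y)$, which vanishes by \eqref{maga}. In any remaining homogeneous triple at least one of the three cyclic double-brackets is forced to be of type $[\mathfrak n_{-1},\mathfrak n_{-2}]$, $[\mathfrak n_{-2},\mathfrak n_{-2}]$, or to land in a nonexistent negative degree, so by rule (d) all three cyclic terms vanish and Jacobi is trivial. This shows $\mathfrak g_0$ is a graded Lie algebra.

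For part (3) I would first note that $\mathfrak n_\minu=\mathfrak n_{-2}\oplus\mathfrak n_{-1}$ is closed under the bracket since $[\mathfrak n_{-1},\mathfrak n_{-1}]\subseteq\mathfrak n_{-2}$ and $[\mathfrak n_{-1},\mathfrak n_{-2}]=[\mathfrak n_{-2},\mathfrak n_{-2}]=0$, hence is a $2$-step graded Lie subalgebra of $\mathfrak g_0$. Then, for each $A\in\mathfrak n_{00}$, I would define $D_A\in\mathrm{End}(\mathfrak n_\minu)$ to act as $\rho(A)$ on $\mathfrak n_{-1}$ and as $\tau(A)$ on $\mathfrak n_{-2}$; by construction $D_A$ preserves each stratum. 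That $D_A$ is a derivation I would check on homogeneous pairs: on $\mathfrak n_{-1}\times\mathfrak n_{-1}$ the Leibniz rule reads $D_A\,\omega(X,Y)=\omega(\rho(A)X,Y)+\omega(X,\rho(A)Y)$, which is \eqref{maga} once more, and on the pairs involving $\mathfrak n_{-2}$ both sides are $0$. Hence $D_A\in\mathfrak n_0$. Finally $A\mapsto D_A$ is a Lie algebra homomorphism: restricted to $\mathfrak n_{-1}$ this is $\rho([A,B]_0)=[\rho(A),\rho(B)]$ and restricted to $\mathfrak n_{-2}$ it is $\tau([A,B]_0)=[\tau(A),\tau(B)]$, both assumed. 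Thus $\mathfrak n_{00}$ sits inside $\mathfrak n_0$ as a Lie subalgebra.

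I do not expect a genuine obstacle: once one observes that \eqref{maga} is simultaneously the $(0,-1,-1)$ piece of the Jacobi identity and the Leibniz rule for $D_A$ over $\omega$, the remainder is bookkeeping over finitely many homogeneous cases. The only point deserving a word of care is the identification of $\mathfrak n_{00}$ with its image in $\mathfrak n_0$ in part (3): strictly this requires injectivity of $A\mapsto D_A$, i.e. faithfulness of $\rho\oplus\tau$, which does not follow from the stated hypotheses and should be flagged (it does hold in all the concrete situations considered later in the paper).
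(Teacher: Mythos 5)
Your proof is correct and follows essentially the same route as the paper's: the part (1) computation is the paper's matrix-shorthand calculation rewritten for the bilinear form, and your operator $D_A$ is exactly the paper's $\sigma(A)=\tau(A)\oplus\rho(A)$, with the same observation that \eqref{maga} is simultaneously the $(0,-1,-1)$ Jacobi identity and the Leibniz rule over $\omega$. Your closing caveat that identifying $\mathfrak{n}_{00}$ with its image in $\mathfrak{n}_0$ requires faithfulness of $\rho\oplus\tau$ is a legitimate point the paper passes over silently.
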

\begin{remark}
Note that, in the respective bases $\{ f_\mu\}_{\mu=1}^s$ in $S$ and $\{e_i\}_{i=1}^r$ in $R$, the equation \eqref{maga} is:
  \be\boxed{
  \rho(A)^\alpha{}_\mu\,\,\omega^i{}_{\alpha\nu}+\rho(A)^\alpha{}_\nu\,\,\omega^i{}_{\mu\alpha}\,\,=\,\,\tau(A)^i{}_j\,\,\omega^i{}_{\mu\nu}}\label{magb}\ee
  for all $A\in\mathfrak{n}_{00}$, all $i=1,2,\dots,r$ and all $\mu,\nu=1,2,\dots,s$.
  In this basis the condition (1) is $$\big(\,\,\tau([A,B]_0)-[\tau(A),\tau(B)]_{\mathrm{End}(R)}\,\,\big)^i{}_j\,\,\omega^j{}_{\mu\nu}\,\,=\,\,0$$ for all $i=1,2,\dots,r,\,\mu,\nu=1,2,\dots s$,  and $A,B\in\mathfrak{n}_{00}$.
  \end{remark}

\noindent
\emph{Proof of the lemma.}
The proof of part (1) is a pure calculation using the  equation \eqref{maga}. We first rewrite it in the shorthand notation as:
$$
\rho(A)\omega+\omega\rho(A)^T=\tau(A)\omega, \quad \forall  A\in\mathfrak{n}_{00}.$$
Then we have:
  $$\begin{aligned}
    \tau([A,B]_0)\omega=&\rho([A,B]_0)\omega+\omega\rho([A,B]_0)^T=\\
    &\rho(A)\rho(B)\omega-\rho(B)\rho(A)\omega+\omega\rho(B)^T\rho(A)^T-\omega\rho(A)^T\rho(B)^T=\\
    &\rho(A)\Big(\tau(B)\omega-\omega\rho(B)^T\Big)-\rho(B)\Big(\tau(A)\omega-\omega\rho(A)^T\Big)+\\&\Big(\tau(B)\omega-\rho(B)\omega\Big)\rho(A)^T-\Big(\tau(A)\omega-\rho(A)\omega\Big)\rho(B)^T=\\
    &\rho(A)\Big(\tau(B)\omega\Big)-\rho(B)\Big(\tau(A)\omega\Big)+\Big(\tau(B)\omega\Big)\rho(A)^T-\Big(\tau(A)\omega\Big)\rho(B)^T=\\
    &\tau(A)\tau(B)\omega-\tau(B)\omega\rho(A)^T-\Big(\tau(B)\tau(A)\omega-\tau(A)\omega\rho(B)^T\Big)+\\
    &\tau(B)\omega\rho(A)^T-\tau(A)\omega\rho(B)^T=\tau(A)\tau(B)\omega-\tau(B)\tau(A)\omega=\\
    &([\tau(A_,\tau(B)]_{\mathrm{End}(R)})\omega,
       \end{aligned}
$$
which proves part (1).

  The proof of parts (2) and (3) is as follows:\\
  We need to check the Jacobi identity for the bracket $[\cdot,\cdot]$.

  We first consider the representation
  $$\sigma=\tau\oplus\rho\quad\mathrm{ of} \quad\mathfrak{n}_{00}\quad \mathrm{in}\quad \mathfrak{n}_\minu=\mathfrak{n}_{-2}\oplus\mathfrak{n}_{-1},$$
  defined by
  $$\sigma(A)(Y\oplus X)=\tau(A)Y\oplus\rho(A)X, \quad \forall A\in\mathfrak{n}_{00},\,\, X\in\mathfrak{n}_{-1},\,\, Y\in\mathfrak{n}_{-2}.$$
We then prove that the representation $\sigma$ is a \emph{strata preserving derivation} in $\mathfrak{n}_\minu$. This is implied by the definitions (a)-(e) of the bracket, and the fundamental equation \eqref{maga} as follows:

The strata preserving property of $\sigma$, $\sigma(\mathfrak{n}_{-i})\subset\mathfrak{n}_{-i}$, $i=1,2$, is obvious by the definitions of $\rho$ and $\tau$. However, we need to check that $\sigma$ is a derivation, i.e. that
  \be\sigma (A)[X,Y] =[\sigma(A)X,Y]+[X,\sigma(A)Y]\label{der0}\ee
  for all $A\in\mathfrak{n}_{00}$ and for all $X,Y\in \mathfrak{n}_\minu$.
Because of the strata preserving property of $\sigma$, which we have just established, and because of the point (d) of the definition of the bracket, the equation \eqref{der0} is satisfied when both $X$ and $Y$ are in $\mathfrak{n}_{-2}$, or when $X$ is in $\mathfrak{n}_{-1}$ and $Y$ is $\mathfrak{n}_{-2}$. The only thing to be checked is if \eqref{der0} is also valid when both $X$ and $Y$ belong to $\mathfrak{n}_{-1}$.  
  But this just follows directly from \eqref{maga}, since if $ X,Y\in\mathfrak{n}_{-1}$ then
  $$\begin{aligned}\sigma(A)[X,Y]&=\sigma(A)\omega(X,Y)=\tau(A)\omega(X,Y)=\\
    &\omega(\rho(A)X,Y)+\omega(X,\rho(A)Y)=[\rho(A)X,Y]+[X,\rho(A)Y]=\\
    &[\sigma(A)X,Y]+[X,\sigma(A)Y],\quad \forall A\in\mathfrak{n}_{00}.\end{aligned}$$

  Now we return to checking the Jacobi identity for the bracket $[\cdot,\cdot]$ in $\mathfrak{g}_0$:

  On elements of the form $A,B\in \mathfrak{n}_{00}$, $Z\in \mathfrak{n}_\minu$, by (b)-(c), we have
    $$[[A,B],Z]+[[Z,A],B]+[[B,Z],A]=\Big(\sigma([A,B])-[\sigma(A),\sigma(B)]\Big)Z,$$
    which vanishes due to the representation property of $\sigma$. On the other hand, on elements $A\in \mathfrak{n}_{00}$ and $Z_1,Z_2\in \mathfrak{n}_\minu$ we have
    $$[[A,Z_1],Z_2]+[[Z_2,A],Z_1]+[[Z_1,Z_2],A]=[\sigma(A)Z_1,Z_2]+[Z_1,\sigma(A)Z_2]-\sigma(A)[Z_1,Z_2],$$
    which is again zero, on the ground of the derivation property \ref{der0} of $\sigma$. Obviously the bracket satisfies the Jacobi identity when it is restricted to $\mathfrak{n}_{00}$; it is the Lie bracket $[\cdot,\cdot]_o$ of the Lie algebra $\mathfrak{n}_{00}$. Finally, property (2) implies that $[[Z_1,Z_2],Z_3]=0$ for all $Z_1,Z_2,Z_3$ in $\mathfrak{n}_\minu$, hence the Jacobi identity is trivially satisfied for $[\cdot,\cdot]$, when it is restricted to $\mathfrak{n}_\minu$.  \hspace{9.cm}$\Box$

    \vspace{0.5cm}
    In the following we will use the map $\omega\in\mathrm{Hom}(\bigwedge^2S,R)$ satisfying the \emph{magical equation} \eqref{maga},  to construct contactifications with nontrivial symmetry algebras $\mathfrak{g}$. The setting will include  Cartan's contactification with symmetry ${\bf F}_4$ mentioned in the Helgason's quote. For this, however we need few preparations.

\section{Two-step filtered manifolds}
A \emph{2-step filtered structure} on an $(s+r)$-dimensional manifold $M$ is a pair $(M,{\mathcal D})$, in which $\mathcal D$ is a vector distribution  of rank $s$ on $M$, such that it is \emph{bracket generating} in the quickest possible way. This means that its \emph{derived distribution}  ${\mathcal D}_{-2}:=[{\mathcal D}_{-1},{\mathcal D}_{-1}]$, with ${\mathcal D}_{-1}={\mathcal D}$, is such that $${\mathcal D}_{-2}=\mathrm{T}M.$$
It provides the simplest nontrivial \emph{filtration}
$$\mathrm{T}M={\mathcal D}_{-2}\supset{\mathcal D}_{-1}$$
  of the tangent bundle $\mathrm{T}M$.

  A (local) \emph{authomorphism} of a 2-step filtered manifold $(M,{\mathcal D})$ is a (local) diffeomorphism $\phi:M\to M$ such that $\phi_*{\mathcal D}\subset{\mathcal D}$. Since authomorphism can be composed and have inverses, they form a \emph{group} $G$ of (local) authomorphism of $(M,{\mathcal D})$, also called a \emph{group of (local) symmetries of} $\mathcal D$. Infinitesimally the Lie group of authomorphism defines the \emph{Lie algebra} $\mathfrak{aut}({\mathcal D})$ \emph{of symmetries}, which is the real span of all vector fields $X$ on $M$ such that $[X,Y]\subset {\mathcal D}$ for all $Y\in{\mathcal D}$.

  Among all the 2-step filtered manifolds $(M,{\mathcal D})$ particularly simple are those which can be realized on a group manifold of a \emph{2-step nilpotent} Lie group. These are related to the notion of the \emph{nilpotent approximation} of a pair $(M,{\mathcal D})$. This is defined as follows:

  At every point $x\in M$ equipped with a 2-step filtration ${\mathcal D}_{-2}\supset{\mathcal D}_{-1}$ we have well defined vector spaces 
  $n_{-1}(x)={\mathcal D}_{-1}(x)$ and $n_{-2}(x)={\mathcal D}_{-2}(x)/{\mathcal D}_{-1}(x)$, which define a vector space
  $$\mathfrak{n}(x)=\mathfrak{n}_{-2}(x)\oplus\mathfrak{n}_{-1}(x).$$
  This vector space is naturally a \emph{Lie algebra}, with a \emph{Lie bracket} induced form the Lie bracket of vector fields in $\mathrm{T}M$. Due to the 2-step property of the filtration defined by $\mathcal D$ this Lie algebra is \emph{2-step nilpotent},
  $$[\mathfrak{n}_{-1}(x),\mathfrak{n}_{-1}(x)]=\mathfrak{n}_{-2}(x)\quad\&\quad[\mathfrak{n}_{-1}(x),\mathfrak{n}_{-2}(x)]=\{0\}.$$
  This 2-step nilpotent Lie algebra is a \emph{local invariant} of the structure $(M,{\mathcal D})$, and it is called a \emph{nilpotent approximation of} the structure $(M,{\mathcal D})$ at $x\in M$.

  This enables for defining a class of particularly simple examples of 2-step filtered structures:

  Consider a \emph{2-step nilpotent Lie algebra} $\mathfrak{n}=\mathfrak{n}_{-2}\oplus\mathfrak{n}_{-1}$, and let $M$ be a Lie \emph{group}, whose Lie algebra is $\mathfrak{n}$. The Lie algebra $\mathfrak{n}_M$ of left invariant vector fields on $M$ is isomorphic to $\mathfrak{n}$ and mirrors its gradation, $\mathfrak{n}_M={\mathfrak{n}_M}_{-2}\oplus{\mathfrak{n}_M}_{-1}$. Now, taking all linear combinations with \emph{smooth functions} coefficients of all vector fields from the graded component ${\mathfrak{n}_M}_{-1}$ of $\mathfrak{n}_M$, one defines a \emph{vector distribution} ${\mathcal D}=\Span_{{\mathcal{F}}(M)}(\mathfrak{n}_M)$ on $M$. The so constructed filtered structure $(M,{\mathcal D})$ is obviously 2-step graded and is the \emph{simplest} filtered structure with nilpotent approximation being equal to $\mathfrak{n}$ everywhere. We call this  $(M,{\mathcal D})$ structure the \emph{flat model} for all the 2-step filtered structures having the same constant nilpotent approximation $\mathfrak{n}$. 

  It is remarkable that the largest possible symmetry of all 2-step filtered structures $(M,{\mathcal D})$ is precisely the symmetry of the flat model. As such it is \emph{algebraically} determined by the nilpotent approximation $\mathfrak{n}$. This is the result of Noboru Tanaka \cite{tanaka}. To describe it we recall the notion of \emph{Tanaka prolongation}.

\begin{definition}  
  The \emph{Tanaka prolongation} of a 2-step nilpotent Lie algebra $\mathfrak{n}$ is a graded Lie algebra $\mathfrak{g}(\mathfrak{n})$ given by a direct sum
    \begin{equation}
      \mathfrak{g}(\mathfrak{n})=\mathfrak{n}\oplus\mathfrak{n}_0\oplus\mathfrak{n}_1\oplus\dots\oplus\mathfrak{n}_j\oplus\cdots,\label{gt1}\end{equation} with
      \begin{equation}\mathfrak{n}_k=\Big\{\bigoplus_{j<0}\mathfrak{n}_{k+j}\otimes\mathfrak{n}_j^*\ni A\,\,\mathrm{s.t.}\,\,A[X,Y]=[AX,Y]+[X,AY]\Big\}\label{gt}\end{equation}
      for each $k\geq 0$.
      
      Furthermore, for each $j\geq 0$, the Lie algebra $$\mathfrak{g}_j(\mathfrak{n})=\mathfrak{n}\oplus\mathfrak{n}_0\oplus\mathfrak{n}_1\oplus\dots\oplus\mathfrak{n}_j$$
      is called the Tanaka prolongations of $\mathfrak{n}$ up to $j^{th}$ order. 
      \end{definition}
Setting $[A,X]=AX$ for all $A\in \mathfrak{n}_k$ with $k\geq 0$ and for all $X\in\mathfrak{n}$ makes the condition in \eqref{gt} into the Jacobi identity. Moreover, if $A\in \mathfrak{n}_k$ and $B\in\mathfrak{n}_l$, $k,l\geq 0$, then their commutator $[A,B]\in\mathfrak{n}_{k+l}$ is defined on elements $X\in\mathfrak{n}$ inductively, according to the Jacobi identity. By this we mean that it should satisfy  
    $$[A,B]X=[A,BX]-[B,AX],$$
    which is sufficient enough to define $[A,B]$. 
    \begin{remark}
      Note, in particular, that $\mathfrak{n}_0$ is the Lie algebra of \emph{all derivations of} $\mathfrak{n}$ preserving the two strata $\mathfrak{n}_{-1}$ and $\mathfrak{n}_{-2}$ of the direct sum $\mathfrak{n}=\mathfrak{n}_{-2}\oplus\mathfrak{n}_{-1}$:
$$\mathfrak{n}_0=\big\{\,\mathrm{Der}(\mathfrak{n})\ni D\,\,\mathrm{s.t.}\,\,D\mathfrak{n}_j\subset\mathfrak{n}_j\,\,\mathrm{for}\,\, j=-1,-2\,\big\}.$$
      \end{remark}

   Although the Tanaka prolongation of a nilpotent Lie algebra $\mathfrak{n}$ is in general infinite, in this paper we will be interested in \emph{situations when the Tanaka prolongation}
$$\mathfrak{g}=\mathfrak{g}(\mathfrak{n})$$
of the $2$-step nilpotent part $$\mathfrak{n}=\mathfrak{n}_{-2}\oplus\mathfrak{n}_{-1}$$ is \emph{finite} and \emph{symmetric}, in the sense   $$\mathfrak{g}(\mathfrak{n})=\mathfrak{n}_{-2}\oplus\mathfrak{n}_{-1}\oplus\mathfrak{n}_0\oplus\mathfrak{n}_1\oplus\mathfrak{n}_2,$$
with
$$\dim(\mathfrak{n}_{-k})=\dim(\mathfrak{n}_k), \quad k=1,2.$$
Such situations \emph{are possible}, and in them the so defined Lie algebra $\mathfrak{g}(\mathfrak{n})$ is \emph{simple}. In such case the Tanaka prolongation $\mathfrak{g}(\mathfrak{n})$ is \emph{graded}, and the subalgebra $$\mathfrak{p}=\mathfrak{n}_0\oplus\mathfrak{n}_1\oplus\mathfrak{n}_2,$$ in such $\mathfrak{g}(\mathfrak{n})$ is \emph{parabolic}. Moreover, the Lie algebra 
$$\mathfrak{p}_{opp}=\mathfrak{n}_{-2}\oplus\mathfrak{n}_{-1}\oplus\mathfrak{n}_0,$$
is also a parabolic subalgebra of this simple $\mathfrak{g}(\mathfrak{n})$. It is isomorphic to $\mathfrak{p}$, $\mathfrak{p}\simeq\mathfrak{p}_{opp}$. 

Regardless of the fact if $\mathfrak{g}(\mathfrak{n})$ is finite or not, we have the following general theorem, which is a specialization of a remarkable theorem by Noboru Tanaka \cite{tanaka}:
\begin{theorem}\label{tansym}
  Consider 2-step filtered structures $(M,{\mathcal D})$, with distributions ${\mathcal D}$ having the same constant milpotent approximation  $\mathfrak{n}$. Then
  \begin{itemize}
  \item The most symmetric of all of these distribution structures is the flat model $(M,{\mathcal D})$, with $M$ being a nilpotent Lie group associated of the nilpotent approximation algebra $\mathfrak{n}$, and with $\mathcal D$ being the first component ${\mathcal D}^{-1}$ of the natural filtration on $M$ associated to the $2$-step grading in $\mathfrak{n}$.
  \item The Lie algebra of authomorphism $\mathfrak{aut}({\mathcal D})$ of the flat model structure is isomorphic to the Tanaka prolongation $\mathfrak{g}(\mathfrak{n})$ of the nilpotent approximation $\mathfrak{n}$,
$\mathfrak{aut}({\mathcal D})\simeq \mathfrak{g}(\mathfrak{n}).$   
    \end{itemize}
\end{theorem}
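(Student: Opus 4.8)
The statement is the $2$-step specialization of Tanaka's structure theorem, so the plan is to reassemble the two halves of his argument --- an a priori bound on the symmetry algebra, realized on the flat model --- in the present simplified setting. Fix a connected $2$-step filtered manifold $(M,\mathcal D)$ with constant symbol $\mathfrak n=\mathfrak n_{-2}\oplus\mathfrak n_{-1}$, a base point $x_0\in M$, and write $\mathfrak a=\mathfrak{aut}(\mathcal D)$. For the \textbf{upper bound} I would filter $\mathfrak a$ using the standard weighted-jet filtration of vector field germs at $x_0$ adapted to $\mathcal D$: $X$ has weighted order $\ge k$ when bracketing $X$ with any $i$ sections of $\mathcal D$ yields, near $x_0$, a section of $\mathcal D_{-i+k}$ (with $\mathcal D_j=TM$ for $j\le-2$), normalized so that sections of $\mathcal D$ have weighted order $\ge-1$ and all vector fields have weighted order $\ge-2$. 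Intersecting with $\mathfrak a$ gives a descending filtration $\mathfrak a=\mathfrak a^{(-2)}\supset\mathfrak a^{(-1)}\supset\mathfrak a^{(0)}\supset\mathfrak a^{(1)}\supset\cdots$ with $[\mathfrak a^{(k)},\mathfrak a^{(l)}]\subset\mathfrak a^{(k+l)}$. The crucial step is to build injective ``symbol'' maps
\[
\mathfrak a^{(k)}/\mathfrak a^{(k+1)}\hookrightarrow\mathfrak n_k,\qquad k\ge-2,
\]
where for $k=-2,-1$ the map is evaluation at $x_0$, with image in $\mathcal D_{-2}(x_0)/\mathcal D_{-1}(x_0)\cong\mathfrak n_{-2}$ resp. $\mathcal D_{-1}(x_0)\cong\mathfrak n_{-1}$, and for $k\ge0$ it sends $X\in\mathfrak a^{(k)}$ to the tuple of maps $\mathfrak n_j\to\mathfrak n_{j+k}$ induced on the associated graded of $T_{x_0}M$ by $\xi\mapsto[X,\tilde\xi]$, $\tilde\xi$ any vector field representing $\xi$. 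One checks (i) that this tuple satisfies the derivation identity $A[X,Y]=[AX,Y]+[X,AY]$, i.e. lies in the prolongation space $\mathfrak n_k$ of \eqref{gt} --- this is the Jacobi identity for brackets of vector fields, used with $X\in\mathfrak a$; and (ii) injectivity, i.e. a symmetry whose weighted $k$-jet at $x_0$ vanishes already lies in $\mathfrak a^{(k+1)}$, immediate from the definitions. Finally $\bigcap_k\mathfrak a^{(k)}=0$: a symmetry vanishing to infinite weighted order at $x_0$ vanishes near $x_0$, hence, by propagation along $\mathcal D$-curves and connectedness, everywhere. Summing, $\dim\mathfrak a=\sum_{k\ge-2}\dim(\mathfrak a^{(k)}/\mathfrak a^{(k+1)})\le\sum_{k\ge-2}\dim\mathfrak n_k=\dim\mathfrak g(\mathfrak n)$, both sides possibly infinite.

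\textbf{The flat model attains the bound.} On $M=N$, the simply connected nilpotent group with $\mathrm{Lie}(N)=\mathfrak n$ and $\mathcal D=\mathrm{Span}_{{\mathcal F}(M)}({\mathfrak{n}_M}_{-1})$, I would produce $\mathfrak g(\mathfrak n)$ inside $\mathfrak a$ degree by degree. The subalgebra $\mathfrak n=\mathfrak n_{-2}\oplus\mathfrak n_{-1}$ is realized by the right-invariant vector fields, whose flows are left translations and hence preserve the left-invariant $\mathcal D$. The degree-$0$ part $\mathfrak n_0=\{D\in\mathrm{Der}(\mathfrak n):D\mathfrak n_j\subset\mathfrak n_j\}$ is realized by the infinitesimal automorphisms of $N$ integrating these derivations: a strata-preserving derivation sends $\mathcal D$'s defining directions to themselves, so the resulting one-parameter groups of automorphisms preserve $\mathcal D$ (for the grading derivation, a weighted Euler field). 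For degrees $k\ge1$, when $\mathfrak g(\mathfrak n)$ is finite --- the situation of interest, including $\mathfrak f_4$ and $\mathfrak e_6$ --- this is the standard fact that $(N,\mathcal D)$ is the open dense cell of the generalized flag manifold $G/P$ with $\mathrm{Lie}(G)=\mathfrak g(\mathfrak n)$ and $\mathfrak p=\mathfrak n_0\oplus\mathfrak n_1\oplus\mathfrak n_2$, the distribution being the restriction of the $G$-invariant one cut out by $\mathfrak n_{-1}\subset\mathfrak g/\mathfrak p$; hence $G$ acts on $(N,\mathcal D)$ by local symmetries and $\mathfrak g(\mathfrak n)\hookrightarrow\mathfrak a$. (In general one uses Tanaka's explicit higher symmetries --- polynomial vector fields in exponential coordinates built inductively from the bracket relations of $\mathfrak g(\mathfrak n)$.) Comparing with the upper bound gives $\mathfrak a\cong\mathfrak g(\mathfrak n)$ for the flat model, and then $\dim\mathfrak{aut}(\mathcal D')\le\dim\mathfrak g(\mathfrak n)=\dim\mathfrak{aut}(\mathcal D)$ for any other $(M',\mathcal D')$ with the same symbol --- the two assertions of the theorem.

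\textbf{Main obstacle.} The one genuinely non-formal point is the symbol-map step: one must set up the weighted filtration carefully enough that the tuple of induced graded maps attached to $X\in\mathfrak a^{(k)}$ is recognized to satisfy the derivation condition \eqref{gt}, converting the Jacobi identity for vector fields into the defining relation of $\mathfrak n_k$, and that injectivity holds at every level; with the ``no nonzero symmetry vanishes to infinite order'' lemma these are the only ingredients that are not bookkeeping. Constructing the degree-$\ge1$ symmetries on the flat model is the other technical point, routine once the flag-manifold picture is invoked. As all of this is carried out in full generality in \cite{tanaka}, in the write-up I would state the theorem as a specialization of Tanaka's and refer there for the details, as the excerpt already does.
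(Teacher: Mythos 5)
Your outline is a faithful reconstruction of Tanaka's original argument in the two-step case --- the weighted filtration of $\mathfrak{aut}({\mathcal D})$ with injective symbol maps into the prolongation spaces $\mathfrak{n}_k$ for the upper bound, and the realization of $\mathfrak{g}(\mathfrak{n})$ by explicit vector fields (or via the flag manifold $G/P$) on the flat model --- and the paper itself gives no independent proof, stating the result as a specialization of Tanaka's theorem and citing \cite{tanaka}, exactly as you propose to do in your write-up. So the proposal is correct and takes essentially the same route as the paper.
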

\begin{remark}
  This theorem is of fundamental importance for explanation of the Cartan's result about a realization of ${\bf F}_4$ in $\bbR^{15}$. As we will see Cartan's $\bbR^{15}$ is actually a \emph{domain of a chart} $({\mathcal U},\varphi)$ on a certain 2-step nilpotent Lie group $M$, with a 2-step nilpotent Lie algebra $\mathfrak{n}$,  and the equivalent description of ${\bf F}_4$ in terms of a symmetry group of the contactification $(M,{\mathcal D})$ from our Example \ref{exa4} is valid because this contactification is just the flat model for the 2-step filtration $(M,{\mathcal D})$ with the nilpotent approximation $\mathfrak{n}$. 
\end{remark}

Using the information about the Tanaka prolongation of a nilpotent Lie algebra $\mathfrak{n}$ we can enlarge our Lemma \ref{l21} by changing its point (3) into the following more complete form:
\begin{lemma}\label{l213}
  With all the assumptions of Lemma \ref{l21}, and with points {\rm(1)} and {\rm (2)} as in Lemma \ref{l21}, its point {\rm(3)} is equivalent to 
    \begin{itemize}
    \item[]  {\rm(3)} Moreover, in the case {\rm \eqref{ca2}} the Lie subalgebra 
          $$\mathfrak{n}_\minu=\mathfrak{n}_{-2}\oplus\mathfrak{n}_{-1}$$ of $$\mathfrak{g}_0=\mathfrak{n}_{-2}\oplus\mathfrak{n}_{-1}\oplus \mathfrak{n}_{00}$$ 
    is a 2-step graded nilpotent Lie algebra, and the algebra $\mathfrak{n}_{00}$ is a Lie \emph{subalgebra} of the Tanaka prolongation up to $0^{th}$ order $\mathfrak{g}_0(\mathfrak{n}_\minu)$ of the Lie algebra  $\mathfrak{n}_\minu=\mathfrak{n}_{-2}\oplus\mathfrak{n}_{-1}$.
  \end{itemize}
 \end{lemma}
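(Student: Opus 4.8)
The plan is to show that Lemma \ref{l21}'s point (3) and the reformulated point (3) of Lemma \ref{l213} say the same thing, the only genuinely new ingredient being the Remark after the definition of Tanaka prolongation, which identifies $\mathfrak{n}_0(\mathfrak{n}_\minu)$ with the algebra of strata-preserving derivations of $\mathfrak{n}_\minu$. Since $\mathfrak{g}_0(\mathfrak{n}_\minu) = \mathfrak{n}_\minu \oplus \mathfrak{n}_0(\mathfrak{n}_\minu)$ by definition, saying that $\mathfrak{n}_{00}$ is a Lie subalgebra of $\mathfrak{g}_0(\mathfrak{n}_\minu)$ is, given that $\mathfrak{n}_{00}$ meets $\mathfrak{n}_\minu$ only in $\{0\}$, exactly the statement that $\mathfrak{n}_{00}$ embeds as a Lie subalgebra of $\mathfrak{n}_0(\mathfrak{n}_\minu)$, which by the Remark is the algebra $\mathfrak{n}_0 = \{ D \in \mathrm{Der}(\mathfrak{n}_\minu) \mid D\mathfrak{n}_j \subset \mathfrak{n}_j,\ j=-1,-2\}$ appearing verbatim in the original point (3). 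So the two formulations are tautologically equivalent once one unwinds the definitions; the content to be verified is just that the embedding is well-defined and respects brackets.

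First I would recall, from the proof of Lemma \ref{l21}, that under the hypothesis \eqref{ca2} the map $\sigma = \tau \oplus \rho : \mathfrak{n}_{00} \to \mathrm{End}(\mathfrak{n}_\minu)$ was already shown to be a representation taking values in strata-preserving derivations of $\mathfrak{n}_\minu$ — this is precisely the displayed derivation identity \eqref{der0} together with the obvious fact $\sigma(A)\mathfrak{n}_{-i} \subset \mathfrak{n}_{-i}$. Hence $\sigma$ is a Lie algebra homomorphism $\mathfrak{n}_{00} \to \mathfrak{n}_0$, where $\mathfrak{n}_0 = \mathfrak{n}_0(\mathfrak{n}_\minu)$ is as in the Remark. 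Second, I would note this homomorphism is injective: if $\sigma(A) = 0$ then $\rho(A) = 0$ on $\mathfrak{n}_{-1} = S$, and $\rho$ is required to be faithful only if one wants literal injectivity — but in fact for the \emph{subalgebra} claim one does not need injectivity, one only needs the image $\sigma(\mathfrak{n}_{00})$ to be a Lie subalgebra of $\mathfrak{n}_0$, which is automatic since it is the homomorphic image of a Lie algebra. If faithfulness of $\rho$ is assumed (as is implicit when $\mathfrak{n}_{00}$ is to be ``contained in'' $\mathfrak{n}_0$), then $\sigma$ is an embedding and $\mathfrak{n}_{00} \cong \sigma(\mathfrak{n}_{00}) \subset \mathfrak{n}_0$.

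Third, I would assemble the identification of $\mathfrak{g}_0(\mathfrak{n}_\minu)$. By construction $\mathfrak{g}_0(\mathfrak{n}_\minu) = \mathfrak{n}_\minu \oplus \mathfrak{n}_0(\mathfrak{n}_\minu)$ as a graded Lie algebra, with bracket between $\mathfrak{n}_0(\mathfrak{n}_\minu)$ and $\mathfrak{n}_\minu$ given by evaluation $[D, Z] = DZ$. Comparing with the bracket of $\mathfrak{g}_0 = \mathfrak{n}_{-2}\oplus\mathfrak{n}_{-1}\oplus\mathfrak{n}_{00}$ defined in (a)--(e), the maps (b) and (c) say exactly that $A \in \mathfrak{n}_{00}$ acts on $\mathfrak{n}_\minu$ as $\sigma(A)$, i.e. $[A,Z]_{\mathfrak{g}_0} = \sigma(A)Z = [\sigma(A),Z]_{\mathfrak{g}_0(\mathfrak{n}_\minu)}$, while (a) matches the bracket on $\mathfrak{n}_{00}$ with that on $\sigma(\mathfrak{n}_{00})$ because $\sigma$ is a homomorphism, and (d), (e) reproduce the bracket on $\mathfrak{n}_\minu$ inside $\mathfrak{g}_0(\mathfrak{n}_\minu)$. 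Therefore $A \mapsto \sigma(A)$, together with the identity on $\mathfrak{n}_\minu$, is a Lie algebra homomorphism $\mathfrak{g}_0 \to \mathfrak{g}_0(\mathfrak{n}_\minu)$ whose restriction to $\mathfrak{n}_{00}$ realizes $\mathfrak{n}_{00}$ as a subalgebra of $\mathfrak{n}_0(\mathfrak{n}_\minu)$, which is the reformulated (3). Conversely, given (3) in the reformulated sense, one recovers the original (3) just by reading the Remark backwards. The only mild subtlety — and the one place to be careful — is the faithfulness issue: one should either fold ``$\rho$ faithful'' into the standing assumptions or state the equivalence with the understanding that ``subalgebra'' means ``homomorphic image which is a subalgebra,'' so that the two (3)'s match without extra hypotheses. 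There is no serious obstacle; the lemma is essentially a dictionary entry translating the bare-hands construction of Lemma \ref{l21} into Tanaka's language.
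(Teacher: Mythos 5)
Your proposal is correct and follows the same route the paper implicitly takes (the paper offers no separate proof, treating the lemma as an immediate translation via the Remark identifying $\mathfrak{n}_0(\mathfrak{n}_\minu)$ with the strata-preserving derivations, exactly as you do). Your extra care about faithfulness of $\sigma=\tau\oplus\rho$ versus merely taking the homomorphic image is a reasonable clarification that the paper glosses over, but it does not change the argument.
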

 \begin{remark} The term `... $\mathfrak{n}_{00}$ is a Lie \emph{subalgebra} of the Tanaka prolongation up to $0^{th}$ order $\mathfrak{g}_0(\mathfrak{n}_\minu)$ of the Lie algebra  $\mathfrak{n}_\minu=\mathfrak{n}_{-2}\oplus\mathfrak{n}_{-1}$..' in the above lemma, means that $\mathfrak{n}_{00}$, although nontrivial, is in general only a subalgebra of the $$\mathfrak{n}_0=\big\{\,\mathrm{Der}(\mathfrak{n}_\minu)\ni D\,\,\mathrm{s.t.}\,\,D \mathfrak{n}_j\subset \mathfrak{n}_j\,\,\mathrm{for}\,\, j=-1,-2\,\big\},\quad\quad \mathfrak{n}_{00}\subsetneq \mathfrak{n}_0,$$ which is the \emph{full} $0$ graded component of the Tanaka prolongation of $\mathfrak{n}_\minu$. So for applications it is reasonable to choose $\mathfrak{n}_{00}$ as large as possible.
   \end{remark}

 \section{Construction of contactifications with nice symmetries}

 Consider a Lie algebra $(\mathfrak{n}_{00},[\cdot,\cdot]_0)$ and its two real representations $(\rho,S)$, $(\tau,R)$, in the respective real $s$- and $r$-dimensional vectors spaces $S$ and $R$.
 Let $S=\bbR^s$, $R=\bbR^r$, and let $\{ f_\mu\}_{\mu=1}^s$ and $\{e_i\}_{i=1}^r$ be respective bases in $S$ and in $R$. Let $\{f^\mu\}_{\mu=1}^s$ be a basis in the vector space $S^*$ dual to the basis $\{ f_\mu\}_{\mu=1}^s$ , $f_\nu\hook f^\mu=\delta_\nu{}^\mu$.
To be in a situation of Lemma \ref{l21} we also assume that we have the homomorphism $\omega\in\mathrm{Hom}\bigwedge^2 S,R)$ satisfying the magical equation \eqref{maga}. 

 Then the map $\omega$ is $$\omega=\tfrac12\omega^i{}_{\mu\nu}e_i\otimes f^\mu\dz f^\nu,$$ and it defines the coefficients $\omega^i{}_{\mu\nu}$, $i=1,\dots,r$, $\mu,\nu=1,2,\dots s$, which satisfy $\omega^i{}_{\mu\nu}=-\omega^i{}_{\nu\mu}$.

 Now, consider an $s$-dimensional manifold, which is an open set $N$ of $\bbR^s$,  $N\subset\bbR^s$, with coordinates $(x^\mu)_{\mu=1}^r$. Then, we have $r$ two-forms $(\omega^i)_{i=1}^r$ on $N$ defined by
 $$\omega^i=\tfrac12\omega^i{}_{\mu\nu}\der x^\mu\dz\der x^\nu.$$
 This produces an $(N,\der{\mathcal D}^\perp)$ structure on $N$, with
 $$\der{\mathcal D}^\perp=\Span_\bbR(\omega^1,\dots,\omega^r).$$
We contactify it. For this we take a local $M=\bbR^r\times N$, with coordinates $\big(u^i,x^\mu\big)(_{i=1}^r)(_{\mu=1}^s)$, and define the `contact forms' on $M$ by
 $$\lambda^i=\der u^i+\omega^i{}_{\mu\nu}x^\mu\der x^\nu.$$
 Because of Lemmas \ref{l21} and \ref{l213} the distribution ${\mathcal D}$ on $M$ defined by this contactification as in Definition \ref{def1}, equips $M$ with a \emph{2-step} filtered structure having ${\mathcal D}_{-1}=\mathcal D$. This has rank $s$. Now using Lemmas \ref{l21} and \ref{l213}, and Tanaka's Theorem \ref{tansym}, we get the following corollary.
 \begin{corollary}
   Let $M=\bbR^r\times\bbR^s$ and let $$\lambda^i=\der u^i+\omega^i{}_{\mu\nu}x^\mu\der x^\nu, \quad i=1,\dots r,$$
   with $\omega$ being a solution of the magical equation \ref{maga} such that $\mathrm{Im}(\omega)=R$.
   Consider the distribution structure $(M,{\mathcal D})$ with 
 a rank $r$ distribution $${\mathcal D}=\{\mathrm{T}M\ni X,\,\, s.t.\,\,X\hook\lambda^i=0,\,\,i=1,\dots,r\}$$
on $M$.    
   Then, the Lie algebra of authomorphism $\mathfrak{aut}({\mathcal D})$ of $(M,{\mathcal D})$ is isomorphic to the Tanaka prolongation of the 2-step nilpotent Lie algebra $\mathfrak{n}_\minu=R\oplus S$ defined in point {\rm (3)} of Lemma \ref{l21} or \ref{l213}. The Lie algebra $\mathfrak{g}_0=R\oplus S\oplus \mathfrak{n}_{00}$ is nontrivially contained in the Tanaka prolongation up to the $0^{th}$ order $\mathfrak{g}_0(\mathfrak{n}_\minu)$ of $\mathfrak{n}_\minu$, with $\{0\}\neq \mathfrak{n}_{00}\subset\mathfrak{n}_0$, and as such is a subalgebra of the algebra of $\mathfrak{aut}({\mathcal D})$.   \label{cruco}
   \end{corollary}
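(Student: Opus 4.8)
The plan is to recognize the pair $(M,\mathcal D)$ produced by the contactification as the \emph{flat model} of the $2$-step filtered structure whose nilpotent approximation is the $2$-step nilpotent Lie algebra $\mathfrak n_\minu=R\oplus S$ of Lemma \ref{l21}(3), and then to read both assertions off Tanaka's Theorem \ref{tansym} together with Lemmas \ref{l21} and \ref{l213}.

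First I would exhibit an explicit frame for $\mathcal D$. The one-forms $\lambda^i=\der u^i+\omega^i{}_{\mu\nu}x^\mu\der x^\nu$ are pointwise linearly independent, so $\mathcal D=\bigcap_i\ker\lambda^i$ has rank $s$; decomposing a vector field as $X=a^\mu\partial_{x^\mu}+b^i\partial_{u^i}$, the equations $X\hook\lambda^i=0$ read $b^i=-\omega^i{}_{\mu\nu}x^\mu a^\nu$, so the fields $X_\mu=\partial_{x^\mu}-\omega^i{}_{\nu\mu}\,x^\nu\,\partial_{u^i}$, $\mu=1,\dots,s$, span $\mathcal D$, while $\{X_\mu\}$ together with the vertical fields $\partial_{u^i}$, $i=1,\dots,r$, is a global frame of $\mathrm TM$. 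A short computation using $\omega^i{}_{\mu\nu}=-\omega^i{}_{\nu\mu}$ gives
$$[X_\mu,X_\nu]=-2\,\omega^i{}_{\mu\nu}\,\partial_{u^i},\qquad [X_\mu,\partial_{u^i}]=[\partial_{u^i},\partial_{u^j}]=0.$$
Since $\mathrm{Im}(\omega)=R$, these brackets span the whole vertical subbundle $\Span(\partial_{u^1},\dots,\partial_{u^r})$, hence $\mathcal D_{-2}=[\mathcal D,\mathcal D]=\mathrm TM$ and $(M,\mathcal D)$ is a $2$-step filtered structure; the hypothesis $\mathrm{Im}(\omega)=R$ is exactly what keeps $\mathcal D$ bracket generating in two steps, equivalently what keeps the $\lambda^i$ from being partly redundant.

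Next I would identify the nilpotent approximation and the flat model. At each $x$ one has $\mathfrak n_{-1}(x)=\mathcal D(x)=\Span(X_\mu(x))$ and $\mathfrak n_{-2}(x)=\mathrm T_xM/\mathcal D(x)=\Span([\partial_{u^i}])$, with induced bracket $[X_\mu,X_\nu]\equiv-2\,\omega^i{}_{\mu\nu}[\partial_{u^i}]$; rescaling the basis of $\mathfrak n_{-2}$ by $-\tfrac12$ turns this into $\omega$, so the symbol is constant over $M$ and isomorphic to the Lie algebra $\mathfrak n_\minu=R\oplus S$ with bracket $[X,Y]=\omega(X,Y)$ of Lemma \ref{l21}(3). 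Since $\{X_\mu,\partial_{u^i}\}$ is a global frame of complete vector fields closing to this finite-dimensional nilpotent Lie algebra, $M$ is the corresponding simply connected nilpotent Lie group and $\mathcal D$ is its invariant rank-$s$ subbundle $\mathcal D^{-1}$; in other words $(M,\mathcal D)$ \emph{is} the flat model of Theorem \ref{tansym} for $\mathfrak n_\minu$. I expect this identification --- upgrading ``the symbol is pointwise $\mathfrak n_\minu$'' to ``$(M,\mathcal D)$ is the flat model'', which needs completeness of the frame so that $M$ is genuinely the nilpotent group and not merely locally modelled on it --- to be the one step calling for real care; it is also what licenses the global, rather than merely local, form of Tanaka's theorem.

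The two conclusions are then immediate. By Theorem \ref{tansym}, $\mathfrak{aut}(\mathcal D)\simeq\mathfrak g(\mathfrak n_\minu)$, the full Tanaka prolongation of $\mathfrak n_\minu$. For the second assertion, $\tau$ being a representation places us in case \eqref{ca2}, so Lemma \ref{l21}(2) makes $\mathfrak g_0=R\oplus S\oplus\mathfrak n_{00}=\mathfrak n_\minu\oplus\mathfrak n_{00}$ a graded Lie algebra, and Lemma \ref{l213}(3) exhibits $\mathfrak n_{00}$ as a Lie subalgebra of the $0^{\mathrm{th}}$ Tanaka component $\mathfrak n_0$ of $\mathfrak n_\minu$; hence $\mathfrak g_0\subset\mathfrak n_\minu\oplus\mathfrak n_0=\mathfrak g_0(\mathfrak n_\minu)$, a proper inclusion whenever $\mathfrak n_{00}\neq\{0\}$. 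Composing $\mathfrak g_0\hookrightarrow\mathfrak g_0(\mathfrak n_\minu)\hookrightarrow\mathfrak g(\mathfrak n_\minu)$ with the isomorphism $\mathfrak g(\mathfrak n_\minu)\simeq\mathfrak{aut}(\mathcal D)$ then presents $\mathfrak g_0$ as a subalgebra of $\mathfrak{aut}(\mathcal D)$, as claimed.
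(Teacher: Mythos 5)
Your proposal is correct and follows exactly the route the paper intends: the paper gives no separate proof of this corollary, deriving it directly from the observation that the contactification is the flat model for the symbol $\mathfrak{n}_\minu$, from Tanaka's Theorem \ref{tansym}, and from Lemmas \ref{l21} and \ref{l213}; you have simply filled in the frame computation $[X_\mu,X_\nu]=-2\,\omega^i{}_{\mu\nu}\partial_{u^i}$ and the role of $\mathrm{Im}(\omega)=R$ that the paper leaves implicit. The only nitpick is that your phrase ``a proper inclusion whenever $\mathfrak{n}_{00}\neq\{0\}$'' should read ``nontrivial'' rather than ``proper,'' since $\mathfrak{n}_{00}$ may equal all of $\mathfrak{n}_0$ (as happens in the $\mathfrak{f}_4$ examples).
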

 \section{Majorana spinor representations of $\soa(p,q)$}\label{spintraut}
 In this section we will explain how to construct the real spin representations of the Lie algebras $\soa(p,q)$, in cases when $p=n$, $q=n-1$, or $p=q=n$, $n=1,2,\dots n$. We will also give a construction of these representations for $\soa(0,n)$. We emphasize that we are only interested in \emph{real} spin representations. They share a general name of \emph{Majorana representations}. Our presentation of this material is adapted from \cite{traut}.

 We will need Pauli matrices
 \be
\sigma_x=\bma 0&1\\1&0\ema,\quad \epsilon=-i\sigma_y=\bma 0&-1\\1&0\ema,\quad \sigma_z=\bma 1&0\\0&-1\ema,\label{pauu}
\ee
and  the $2\times 2$ identity matrix
\be
I=\bma 1&0\\0&1\ema.\label{pauu1}\ee
We have the following identities:
\be\begin{aligned}
  &\sigma_x^2=\sigma_z^2=-\epsilon^2=I\\
  &\sigma_x\epsilon=-\epsilon\sigma_x=\sigma_z,\quad\sigma_z\sigma_x=-\sigma_x\sigma_z=-\epsilon,\quad \epsilon\sigma_z=-\sigma_z\epsilon=\sigma_x.
  \end{aligned}\label{iden}
\ee

Now we quote \cite{traut}:
\begin{quote}
  With this notation, \emph{restricting to low dimensions} $p+q=4,5,6$ \emph{and} $7$, the real representations of the Clifford algebra ${\mathcal C}\ell(0,p+q)$ are all in dimension $s=8$, and are generated by the $p+q$ matrices $\rho_1,\dots,\rho_{(p+q)}$ given by:
\be\begin{aligned}
  \rho_1&=\sigma_z\otimes I\otimes \epsilon\\
  \rho_2&=\sigma_z\otimes \epsilon\otimes \sigma_x\\
  \rho_3&=\sigma_z\otimes \epsilon\otimes \sigma_z\\
  \rho_4&=\sigma_x\otimes \epsilon \otimes I\\
  \rho_5&=\sigma_x\otimes \sigma_x\otimes \epsilon\\
  \rho_6&=\sigma_x\otimes \sigma_z\otimes \epsilon\\
   \rho_7&=\epsilon\otimes I\otimes I.
  \end{aligned}\label{cl07}\ee
  The 8 matrices $\theta_\mu=\sigma_x\otimes\rho_\mu$, $\mu=1,\dots, 7$ and $\theta_8=\epsilon\otimes I \otimes I\otimes I$ give the real representation of ${\mathcal C}\ell(0,8)$ in $S=\bbR^{16}$. Dropping the first factor in $\rho_1,\rho_2,\rho_3$ one obtains the matrices generating a representation of ${\mathcal C}\ell(0,3)$ in $S=\bbR^4$, etc.
  \end{quote}

Majorana representations of $\soa(n-1,n)$ in dimension $s=2^{n-1}$ are called \emph{Pauli representations}, and Majorana representations of $\soa(n,n)$ in dimension $s=2^n$, are called \emph{Dirac representations}.

To construct them we need generalizations of the \emph{Pauli} $\sigma$ matrices and \emph{Dirac} $\gamma$ \emph{matrices}. The construction of those is \emph{inductive}.

It starts with $p+q=1$ with one matrix $\sigma_1=1$, and for every $n=1,2,\dots$, it alternates between $p+q=2n-1$ of Pauli matrices $\sigma_\mu$, $\mu=1,\dots, 2n-1$,  and $p+q=2n$ of Dirac matrices $\gamma_\mu$, $\mu=1,\dots, 2n$.

Again quoting Trautman \cite{traut} we have:

\begin{enumerate}
\item In dimension $p+q=1$ put $\sigma_1=1$.
\item Given  $2^{n-1}\times 2^{n-1}$ matrices $\sigma_\mu$, $\mu=1,\dots,2n-1$, define
  $$\gamma_\mu=\bma 0&\sigma_\mu\\\sigma_\mu&0\ema\,\,\mathrm{for}\,\,\mu=1,\dots,2n-1,$$
  and
  $$\gamma_{2n}=\bma 0&-I\\I&0\ema,$$
   where $I$ is the identity $2^{n-1}\times 2^{n-1}$ matrix.
\item Given $2^n\times 2^n$ matrices $\gamma_\mu$, $\mu=1,\dots,2n$, define $\sigma_\mu=\gamma_\mu$ for $\mu=1,\dots, 2n$, and $\sigma_{2n+1}=\gamma_1\dots\gamma_{2n}$, so that for $n>0$,
  $$\sigma_{2n+1}=\bma I&0\\0&-I\ema.$$
 
\end{enumerate}

In every dimension $p+q=2n-1$, $n\geq 1$, the Pauli matrices $\sigma_\mu$, $\mu=1,\dots,2n-1$ satisfy
$$\sigma_\mu\sigma_\nu+\sigma_\nu\sigma_\mu=2g_{\mu\nu}\underbrace{\big(I\otimes\dots\otimes I\big)}_{n-1\,\, \mathrm{times}},$$
where the $(2n-1)\times (2n-1)$ symmetric matrix $(g_{\mu\nu})$ is \emph{diagonal}, and has the following diagonal elements:
$$(g_{\mu\nu})=\mathrm{diag}\underbrace{(1,-1,\dots,-1,1)}_{(2n-1)\,\, \mathrm{times}}.$$
Likewise, in every dimension $p+q=2n$, $n\geq 1$, the Dirac matrices $\gamma_\mu$, $\mu=1,\dots,2n$ satisfy 
$$\gamma_\mu\gamma_\nu+\gamma_\nu\gamma_\mu=2g_{\mu\nu}\underbrace{\big(I\otimes\dots\otimes I\big)}_{n\,\, \mathrm{times}},$$
where the $(2n)\times (2n)$ symmetric matrix $(g_{\mu\nu})$ is \emph{diagonal}, and has the following diagonal elements:
$$(g_{\mu\nu})=\mathrm{diag}\underbrace{(1,-1,\dots,1,-1)}_{2n\,\, \mathrm{times}}.$$
Therefore, for each $n=1,2,\dots$ the set $\{\sigma_\mu\}_{\mu=1}^{2n-1}$ of Pauli matrices generates the elements of a real $2^{n-1}$-dimensional representation of the Clifford algebra ${\mathcal C}\ell(n-1,n)$, and the set $\{\gamma_\mu\}_{\mu=1}^{2n}$  of Dirac matrices generates the elements of a real $2^{n}$-dimensional representation of the Clifford algebra ${\mathcal C}\ell(n,n)$.

Then, in turn, these real Clifford algebras representations can be further used to define the real spin representations of the Lie algebras $\soa(p+q,0)$, $\soa(n-1,n)$ and $\soa(n,n)$ as follows. One obtains all the generators of the spin representation of $\soa(g)$ by spanning it by all the elements of the form
\begin{itemize}
\item $\tfrac12\rho_\mu\rho_\nu$, with $1\leq \mu<\nu\leq (p+q)$, in the case of $\soa(p+q,0)$, $p+q=3,5,6,7$;
\item $\tfrac12\theta_\mu\theta_\nu$, with $1\leq \mu<\nu\leq 8$, in the case of $\soa(8,0)$;
\item $\tfrac12\sigma_\mu\sigma_\nu$, with $1\leq \mu<\nu\leq (p+q)=2n-1$, in the case of $\soa(n-1,n)$;
  \item $\tfrac12\gamma_\mu\gamma_\nu$, with $1\leq \mu<\nu\leq (p+q)=2n$, in the case of $\soa(n,n)$.
  \end{itemize}
For further details consult \cite{traut}.

We will use all this information in next sections, when we create examples. 
\section{Application: Obtaining the flat model for (3,6) distributions}
%../so3_so3,nb
Let $(\rho,S)$ be the defining representation of $\soa(3)$ in $S=\bbR^3$. It can be generated by:
\be\rho(A_1)=\bma 0&0&-1\\0&0&0\\1&0&0\ema,\quad\rho(A_2)=\bma 0&1&0\\-1&0&0\\0&0&0\ema,\quad\rho(A_1)=\bma 0&0&0\\0&0&-1\\0&1&0\ema.\label{ro3a}\ee
And let $(\tau,R)$ be an equivalent 3-dimensional representation of $\soa(3)$ given by
\be \tau(A_1)=\bma 0&0&-1\\0&0&0\\1&0&0\ema,\quad\tau(A_2)=\bma 0&-1&0\\1&0&0\\0&0&0\ema,\quad\tau(A_1)=\bma 0&0&0\\0&0&1\\0&-1&0\ema.\label{tau3a}\ee
We claim that for these two representations of $\soa(3)$, in the standard bases in $S=\bbR^3$, $R=\bbR^3$, the magical equation \eqref{magb} has the following solution:
$$\omega^1{}_{\mu\nu}=\bma 0&0&0\\0&0&-1\\0&1&0\ema,\quad\omega^2{}_{\mu\nu}=\bma 0&0&-1\\0&0&0\\1&0&0\ema,\quad\omega^3{}_{\mu\nu}=\bma 0&-1&0\\1&0&0\\0&0&0\ema.$$
Now using this solution $(\rho,\tau,\omega)$ of the magical equation \eqref{maga} we use the Corollary \ref{cruco} with $\lambda^i=\der u^u+\omega^i{}_{\mu\nu}x^\mu\der x^\nu$, and obtain the following theorem.
\begin{theorem}
  Let $M=\bbR^6$ with coordinates $(u^1,u^2,u^3,x^1,x^2,x^3)$ and consider three 1-forms
  $$\begin{aligned}
    \lambda^1=&\der u^1+x^2\der x^3\\
    \lambda^2=&\der u^2+x^1\der x^3\\
    \lambda^3=&\der u^3+x^1\der x^2
  \end{aligned}$$
  on $M$.
  Then the rank 3 distribution ${\mathcal D}$ on $M$ defined by ${\mathcal D}=\{\mathrm{T}\bbR^6\ni X\,\,|\,\, X\hook \lambda^i=0,\,\,i=1,2,3\}$ has its Lie algebra of infinitesimal symmetries $\mathfrak{aut}{\mathcal D}$ isomorphic to the Tanaka prolongation of $\mathfrak{n}_\minu=R\oplus S$ where $(\rho,S=\bbR^3)$ and $(\tau,R=\bbR^3)$ are the respective representations \eqref{ro3a}, \eqref{tau3a} of $\mathfrak{n}_{00}=\soa(3)$.

  The symmetry algebra $\mathfrak{aut}({\mathcal D})$ is isomorphic to the simple graded Lie algebra $\soa(4,3)$,
  $$\mathfrak{aut}({\mathcal D})=\soa(4,3),$$
  with the following gradation:
   $$\mathfrak{aut}({\mathcal D})=\mathfrak{n}_{-2}\oplus\mathfrak{n}_{-1}\oplus\mathfrak{n}_0\oplus\mathfrak{n}_1\oplus\mathfrak{n}_2,$$
  with $\mathfrak{n}_{-2}=R$, $\mathfrak{n}_{-1}=S$,
  $$
    \mathfrak{n}_0=\gla(3,\bbR)\supset\mathfrak{n}_{00}$$
  $\mathfrak{n}_{1}=S^*$, $\mathfrak{n}_{2}=R^*$,
 which is inherited from the distribution structure $(M,{\mathcal D})$. The duality signs $*$ at $R^*$ and $S^*$ above are with respect to the Killing form in $\soa(4,3)$.

 The contactification $(M,{\mathcal D})$ is locally a flat model for the parabolic geometry of type $\big({\bf Spin}(4,3),P\big)$ related to the following \emph{crossed} Satake diagram: \tikzset{/Dynkin diagram/fold style/.style={stealth-stealth,thin,
shorten <=1mm,shorten >=1mm}}\begin{dynkinDiagram}[edge length=.5cm]{B}{oot}
   \end{dynkinDiagram}.
\end{theorem}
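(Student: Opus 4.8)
The plan is to reduce the statement to a Tanaka prolongation computation and then to recognise the answer as a parabolic grading of $\soa(4,3)$. Since the triple $(\rho,\tau,\omega)$ solves the magical equation \eqref{maga} by the verification given just above, Lemma \ref{l21} applies and Corollary \ref{cruco} gives at once that $\mathfrak{aut}({\mathcal D})$ is isomorphic to the Tanaka prolongation $\mathfrak{g}(\mathfrak{n}_\minu)$ of the $2$-step nilpotent Lie algebra $\mathfrak{n}_\minu=R\oplus S$, with $\mathfrak{g}_0=R\oplus S\oplus\soa(3)$ sitting inside its nonpositive part and $\soa(3)=\mathfrak{n}_{00}\subset\mathfrak{n}_0$. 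The first concrete step is to identify $\mathfrak{n}_\minu$ abstractly: reading off the three matrices $\omega^i{}_{\mu\nu}$ one sees that $\omega\colon\bgw^2S\to R$ is a linear \emph{isomorphism}, so $\mathfrak{n}_\minu$ is the free $2$-step nilpotent Lie algebra on three generators, i.e. the nilpotent approximation of a generic rank-$3$ distribution on a $6$-manifold, with $\mathfrak{n}_{-1}=S\cong\bbR^3$ and $\mathfrak{n}_{-2}=R\cong\bgw^2S$.

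Next I would compute the Tanaka prolongation degree by degree. Because $\omega$ is surjective, a strata-preserving derivation is determined by its restriction to $\mathfrak{n}_{-1}$, which may be any element of $\gla(S)$ (the action on $\mathfrak{n}_{-2}\cong\bgw^2S$ being then forced by the derivation rule); hence $\mathfrak{n}_0\cong\gla(3,\bbR)$, acting on $\mathfrak{n}_{-1}$ as the standard module and on $\mathfrak{n}_{-2}$ as $\bgw^2$ of it, and it contains $\soa(3)=\mathfrak{n}_{00}$. Solving the defining equations \eqref{gt} in degrees $1$ and $2$ is then a finite linear-algebra exercise, which I expect to yield $\mathfrak{n}_1\cong S^*$, $\mathfrak{n}_2\cong R^*$ and $\mathfrak{n}_k=0$ for $k\geq 3$; note that the grading element of $\mathfrak{n}_0$ (the scalar $-\mathrm{Id}$ on $\mathfrak{n}_{-1}$) acts as $j$ on each $\mathfrak{n}_j$, so $\mathfrak{g}(\mathfrak{n}_\minu)$ is a $|2|$-graded Lie algebra of dimension $3+3+9+3+3=21=\dim\soa(7)$.

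To identify this graded Lie algebra with $\soa(4,3)$, I would realise $\soa(4,3)$ as the skew endomorphisms of $(\bbR^7,g)$ with $g$ of signature $(4,3)$, choose a maximal $g$-isotropic subspace $E$ of dimension $3$ (which exists since a $(4,3)$ form has Witt index $3$), and use the flag $E\subset E^\perp\subset\bbR^7$ to induce a $|2|$-grading. One computes $\mathfrak{g}_0=\gla(E)\oplus\soa(E^\perp/E)=\gla(3,\bbR)$ (the second summand being zero), $\mathfrak{g}_{-1}\cong E$ and $\mathfrak{g}_{-2}\cong\bgw^2E$, with the bracket $\bgw^2\mathfrak{g}_{-1}\to\mathfrak{g}_{-2}$ the canonical isomorphism; hence $\mathfrak{g}_{-2}\oplus\mathfrak{g}_{-1}\cong\mathfrak{n}_\minu$ as graded Lie algebras and $\mathfrak{g}_0\supset\soa(3)$ matches $\mathfrak{n}_0\supset\mathfrak{n}_{00}$. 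Consequently the nonnegative part $\mathfrak{g}_{\geq 0}$ of $\soa(4,3)$ embeds into $\mathfrak{g}(\mathfrak{n}_\minu)$, and a dimension count (both equal $21$) forces the embedding to be onto, so $\mathfrak{aut}({\mathcal D})\cong\soa(4,3)$; equivalently one may appeal to the prolongation rigidity of the parabolic obtained by crossing the short simple root of $B_3$, which is not among the exceptional non-rigid gradings (cf. \cite{AC,Alt,Krug}). That the real form is the \emph{split} one is forced by the real rank: a Cartan subalgebra of $\sla(3,\bbR)\subset\mathfrak{n}_0$ together with the grading element spans a $3$-dimensional $\bbR$-split toral subalgebra, so $\mathrm{rk}_\bbR\mathfrak{aut}({\mathcal D})=3$, which rules out $\soa(5,2)$, $\soa(6,1)$ and $\soa(7)$ and leaves $\soa(4,3)$.

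Finally, the flat-model and Satake-diagram claims follow from Theorem \ref{tansym} and the standard dictionary for parabolic geometries: $(M,{\mathcal D})$ is locally the homogeneous space $\sping(4,3)/P$ with $\mathrm{Lie}(P)=\mathfrak{n}_0\oplus\mathfrak{n}_1\oplus\mathfrak{n}_2$, this $P$ being the parabolic of $B_3$ attached to the short (third) simple root, and the Satake diagram of the split real form $\soa(4,3)$ is $B_3$ with all nodes white and no involution arrows, which with that node crossed is the diagram displayed in the statement. I expect the one genuinely nonroutine point to be the prolongation computation: establishing that $\mathfrak{n}_1$, $\mathfrak{n}_2$ are as asserted and, crucially, that $\mathfrak{n}_3=0$, so that the series terminates and the prolongation is exactly $\soa(4,3)$ and nothing larger; this is where one either solves the Spencer/Tanaka linear system explicitly or invokes the rigidity of the $B_3$ short-root parabolic.
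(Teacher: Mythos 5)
Your proposal is correct and follows essentially the same route as the paper, whose entire proof is the one-line assertion that one calculates the Tanaka prolongation of $\mathfrak{n}_\minu=R\oplus S$ and recognizes it as the stated grading of $\soa(4,3)$. Your write-up in fact supplies considerably more detail than the paper does (the identification of $\mathfrak{n}_\minu$ as the free $2$-step nilpotent algebra on three generators via the invertibility of $\omega$, the computation $\mathfrak{n}_0=\gla(3,\bbR)$, the isotropic-flag realization of the $|2|$-grading of $\soa(4,3)$, and the real-rank argument singling out the split form).
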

\begin{proof}
  Proof is by calculating the Tanaka prolongation of $\mathfrak{n}_\minu=R\oplus S$, which is $\gla(3,\bbR)$, naturally graded by the Tanaka prolongation algebraic procedure precisely as $\mathfrak{aut}({\mathcal D})$ in the statement of the theorem. 
  \end{proof}
 \section{Application: Obtaining Biquard's 7-dimensional flat quaternionic contact manifold via contactification using spin representations of $\soa(1,2)$ and $\soa(3,0)$}
%../Bicquard_so3_ep.nb
 According to Trautman's procedure \cite{traut} there is a real representation of ${\mathcal C}\ell(0,3)$ in $\bbR^4$. There also is an analogous representation of ${\mathcal C}\ell(1,2)$. Both of them are generated by the $\sigma$ matrices
 $$\sigma_1=\bma 0&-1&0&0\\1&0&0&0\\0&0&0&-1\\0&0&1&0\ema,\quad\sigma_2=\bma 0&0&0&-\varepsilon\\0&0&-\varepsilon&0\\0&1&0&0\\1&0&0&0\ema,\quad\sigma_3=\bma 0&0&-\varepsilon&0\\0&0&0&\varepsilon\\1&0&0&0\\0&-1&0&0\ema,$$
 where $$\varepsilon=1\quad \mathrm{for}\quad {\mathcal C}\ell(0,3),$$ and $$\varepsilon=-1\quad\mathrm{for}\quad{\mathcal C}\ell(2,1).$$ One can check that these matrices\footnote{In Trautman's quote in the previous section, these matrices where denoted by $\rho_1$, $\rho_2$, $\rho_3$, and they were only explicitly given for $\varepsilon=1$.} satisfy the (representation of) Clifford algebra relations:
 $$\sigma_\mu\sigma_\nu+\sigma_\nu\sigma_\mu=2g_{\mu\nu}\big(I\otimes I)$$
 with all $g_{\mu\nu}$ being zero, except $g_{11}=-1$, $g_{22}=g_{33}=-\varepsilon$.

 This leads to the following spinorial
 %\footnote{Two step nilpotent Lie algebras consisting of vectorial representation + spin representation were considered, in different context e.g. in \cite{alcor}, \cite{sanal}.}
 representation $\rho$ of $\soa(0,3)$ or $\soa(1,2)$
 \be\rho(A_1)=-\tfrac12\sigma_3,\quad \rho(A_2)=\tfrac12\sigma_2,\quad\rho(A_3)=-\tfrac12\varepsilon\sigma_1.\label{so31}\ee
 Here $(A_1,A_2,A_3)$ constitutes a basis for $\soa(0,3)$ when $\varepsilon=1$ and for $\soa(2,1)$ when $\varepsilon=-1$. This can be extended to the representation of
 $$\mathfrak{n}_{00}=\bbR\oplus \soa\big(\tfrac{1-\varepsilon}{2},\tfrac{5+\varepsilon}{2}\big)$$
 in $S=\bbR^4$ by setting the value of $\rho$ on the generator $A_4=\mathrm{Id}$ as
 \be\rho(A_4)=\tfrac12\mathfrak(I\otimes I).\label{so32}\ee
 For this representation of $\bbR\oplus \soa\big(\tfrac{1-\varepsilon}{2},\tfrac{5+\varepsilon}{2}\big)$, the magical equation \eqref{maga} has a following solution
$$\begin{aligned}
   \omega^1{}_{\mu\nu}=&\bma 0&0&1&0\\0&0&0&-1\\-1&0&0&0\\0&1&0&0\ema,\quad \omega^2{}_{\mu\nu}=\bma 0&0&0&-1\\0&0&-1&0\\0&1&0&0\\1&0&0&0\ema,\\
   &\\
  &\omega^1{}_{\mu\nu}=\bma 0&-\varepsilon&0&0\\\varepsilon&0&0&0\\0&0&0&-1\\0&0&1&0\ema,
 \end{aligned}$$
 with
 \be\begin{aligned}
   \tau(A_1)=\bma 0&0&0\\0&0&\varepsilon\\0&-1&0\ema,\quad &\tau(A_2)=\bma 0&0&-\varepsilon\\0&0&0\\-1&0&0\ema,\quad\tau(A_3)=\bma 0&-\varepsilon&0\\\varepsilon&0&0\\0&0&0\ema\\
   &\tau(A_4)=\bma 1&0&0\\0&1&0\\0&0&1\ema.
 \end{aligned}
 \label{so33}\ee
 This in particular gives the vectorial representation $\tau$ of 
 $$\mathfrak{n}_{00}=\bbR\oplus \soa\big(\tfrac{1-\varepsilon}{2},\tfrac{5+\varepsilon}{2}\big)$$
in $R=\bbR^3$.

Now, by using this solution for $(\rho,\omega,\tau)$ and applying our Corollary \ref{cruco} we have an $(s=4)$-dimensional manifold $N=\bbR^4$, equipped with $r=3$ two-forms $\omega^i=\tfrac12\omega^{}_{\mu\nu}\der x^\mu\dz\der x^\nu$, $i=1,2,3$, which contactifies to an $(s+r)=7$-dimensional manifold $M=\bbR^7$ having a distribution structure $(M,{\mathcal D})$ defined as an annihilator of the $r=3$ one-forms $\lambda^i=\der u^i+\omega^i{}_{\mu\nu}x^\mu\der x^\nu$, $i=1,2,3$.
We have the following theorem.
\begin{theorem}
  Let $M=\bbR^7$ with coordinates $(u^1,u^2,u^3,x^1,x^2,x^3,x^4)$, and consider three 1-forms $\lambda^1,\lambda^2,\lambda^3$ on $M$ given by
  $$\begin{aligned}
    \lambda^1=&\der u^1+x^1\der x^3-x^2\der x^4,\\
    \lambda^2=&\der u^2-x^1\der x^4-x^2\der x^3,\\
    \lambda^3=&\der u^3-\varepsilon x^1\der x^2-x^3\der x^4,
        \end{aligned}\quad\quad\mathrm{with}\quad\quad\varepsilon=\pm1.$$
  The rank 4 distribution ${\mathcal D}$ on $M$ defined as ${\mathcal D}=\{\mathrm{T}\bbR^7\ni X\,\,|\,\,X\hook\lambda^1=X\hook\lambda^2=X\hook\lambda^3=0\}$ has its Lie algebra of infinitesimal authomorphism $\mathfrak{aut}(\mathcal D)$ isomorphic to the Tanaka prolongation of $\mathfrak{n}_{\minu}=R\oplus S$, where $(\rho,S=\bbR^4)$ is the spinorial representation \eqref{so31}-\eqref{so32} of $\mathfrak{n}_{00}=\bbR\oplus \soa\big(\tfrac{1-\varepsilon}{2},\tfrac{5+\varepsilon}{2}\big)$, and $(\tau,R=\bbR^3)$ is the vectorial representation \eqref{so33} of $\mathfrak{n}_{00}$.

  The symmetry algebra $\mathfrak{aut}({\mathcal D})$ is isomorphic to the simple Lie algebra $\spa\big(\tfrac{1-\varepsilon}{2},\tfrac{5+\varepsilon}{2}\big)$,
  $$\mathfrak{aut}({\mathcal D})=\spa\big(\tfrac{1-\varepsilon}{2},\tfrac{5+\varepsilon}{2}\big),$$
  having the following natural gradation 
  $$\mathfrak{aut}({\mathcal D})=\mathfrak{n}_{-2}\oplus\mathfrak{n}_{-1}\oplus\mathfrak{n}_0\oplus\mathfrak{n}_1\oplus\mathfrak{n}_2,$$
  with $\mathfrak{n}_{-2}=R$, $\mathfrak{n}_{-1}=S$,
  $$\begin{aligned}
    \mathfrak{n}_0=&\mathfrak{n}_{00}\oplus \soa\big(\tfrac{1-\varepsilon}{2},\tfrac{5+\varepsilon}{2}\big)=\\&\bbR\oplus \soa\big(\tfrac{1-\varepsilon}{2},\tfrac{5+\varepsilon}{2}\big)\oplus \soa\big(\tfrac{1-\varepsilon}{2},\tfrac{5+\varepsilon}{2}\big),\end{aligned}$$
  $\mathfrak{n}_{1}=S^*$, $\mathfrak{n}_{2}=R^*$,
 which is inherited from the distribution structure $(M,{\mathcal D})$. The duality signs $*$ at $R^*$ and $S^*$ above are with respect to the Killing form in $\spa\big(\tfrac{1-\varepsilon}{2},\tfrac{5+\varepsilon}{2}\big)$.

 The contactification $(M,{\mathcal D})$ is locally a flat model for the parabolic geometry of type $\Big(\spg\big(\tfrac{1-\varepsilon}{2},\tfrac{5+\varepsilon}{2}\big),P\Big)$ related to the following \emph{crossed} satake diagrams:
 \begin{enumerate}
 \item
   \tikzset{/Dynkin diagram/fold style/.style={stealth-stealth,thin,
shorten <=1mm,shorten >=1mm}}\begin{dynkinDiagram}[edge length=.5cm]{C}{*t*}
   \end{dynkinDiagram}
     in the case of $\varepsilon=1$, and
     \item \tikzset{/Dynkin diagram/fold style/.style={stealth-stealth,thin,
shorten <=1mm,shorten >=1mm}}\begin{dynkinDiagram}[edge length=.5cm]{C}{oto}
     \end{dynkinDiagram}
       in the case of $\varepsilon=-1$.
   \end{enumerate}
\end{theorem}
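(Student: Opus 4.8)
My plan is to derive the statement from Corollary \ref{cruco} and then reduce everything to one Tanaka-prolongation computation, exactly as in the proof of the $(3,6)$ theorem above. The displayed $\lambda^i$ are literally $\der u^i+\omega^i{}_{\mu\nu}x^\mu\der x^\nu$ for the matrices $\omega^i{}_{\mu\nu}$, $\tau(A_k)$ written just above the theorem, and those were checked to solve the magical equation \eqref{maga} for the representations $(\rho,S=\bbR^4)$ and $(\tau,R=\bbR^3)$ of $\mathfrak{n}_{00}=\bbR\oplus\soa(\tfrac{1-\varepsilon}2,\tfrac{5+\varepsilon}2)$, with $\mathrm{Im}\,\omega=R$ since the three component $2$-forms are linearly independent. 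Hence Corollary \ref{cruco} applies and gives $\mathfrak{aut}({\mathcal D})\simeq\mathfrak{g}(\mathfrak{n}_\minu)$, the full Tanaka prolongation of the $2$-step nilpotent Lie algebra $\mathfrak{n}_\minu=R\oplus S$ whose only nonzero bracket is $[\,\cdot\,,\cdot\,]|_{\Lambda^2 S}=\omega$ (Lemma \ref{l21}(2)), with the grading of $\mathfrak{aut}({\mathcal D})$ being the one inherited from this prolongation. So it remains only to: (i) compute $\mathfrak{g}(\mathfrak{n}_\minu)$; (ii) identify it with $\spa(\tfrac{1-\varepsilon}2,\tfrac{5+\varepsilon}2)$ carrying the stated grading; (iii) read off the parabolic and its crossed Satake diagram.

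For (i) I would first describe $\mathfrak{n}_\minu$ invariantly. Reading off the $\omega^i{}_{\mu\nu}$, one sees that $S=\bbR^4$ carries the structure of the $\varepsilon$-quaternions (the usual quaternions $\mathbb H$ for $\varepsilon=1$, the split quaternions for $\varepsilon=-1$), that $R=\bbR^3$ is the imaginary part, and that $\omega\colon\Lambda^2 S\to R$ is the imaginary part of the associated quaternionic moment map; equivalently $\mathfrak{n}_\minu$ is the $7$-dimensional (para-)quaternionic Heisenberg algebra. Next, by \eqref{gt}, the degree-$0$ piece $\mathfrak{n}_0$ consists of the pairs $(\rho_0,\tau_0)\in\mathrm{End}(S)\oplus\mathrm{End}(R)$ with $\omega(\rho_0 X,Y)+\omega(X,\rho_0 Y)=\tau_0\,\omega(X,Y)$ --- the magical equation \eqref{maga} read with the operator as the unknown. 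Since $\mathrm{Im}\,\omega=R$ forces $\tau_0$ to be determined by $\rho_0$, solving this linear system returns the left multiplications, the right multiplications and the grading derivation, so $\mathfrak{n}_0\cong\bbR\oplus\soa(\tfrac{1-\varepsilon}2,\tfrac{5+\varepsilon}2)\oplus\soa(\tfrac{1-\varepsilon}2,\tfrac{5+\varepsilon}2)$, of dimension $7$, with $\mathfrak{n}_{00}\subsetneq\mathfrak{n}_0$ as predicted by Lemma \ref{l213}. Finally I would run the prolongation two further steps from \eqref{gt}: an element of $\mathfrak{n}_1$ is a degree $+1$ linear map $\mathfrak{n}_\minu\to\mathfrak{n}_\minu$ obeying the derivation condition, and $2$-step nilpotency collapses it to $\mathfrak{n}_1\cong S^*$; one more step gives $\mathfrak{n}_2\cong R^*$ and $\mathfrak{n}_3=\{0\}$. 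Thus $\mathfrak{g}(\mathfrak{n}_\minu)$ is finite and symmetric with $\dim=3+4+7+4+3=21$.

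For (ii)--(iii): by the criterion recalled just before Theorem \ref{tansym}, such a prolongation is simple; being $21$-dimensional with a $|2|$-grading of type $(\dim\mathfrak{n}_{-2},\dim\mathfrak{n}_{-1})=(3,4)$ and degree-$0$ part $\soa\oplus\soa\oplus\bbR$, its complexification must be $\mathfrak{sp}(6,\bbC)=C_3$ with the parabolic obtained by crossing the middle (short) node $\alpha_2$ --- precisely the $|2|$-grading with $\dim\mathfrak{g}_{-2}=3$, $\dim\mathfrak{g}_{-1}=4$ and Levi $\soa\oplus\soa\oplus\bbR$. Among the real forms of $C_3$ admitting this parabolic, the one we obtain is singled out by the real structure already carried by $\mathfrak{n}_\minu\oplus\mathfrak{n}_0$: its Killing form is definite on the two $\soa$-factors when $\varepsilon=1$ (they are compact) and split when $\varepsilon=-1$, which forces $\spa(\tfrac{1-\varepsilon}2,\tfrac{5+\varepsilon}2)$ and the displayed crossed Satake diagram --- nodes $\alpha_1,\alpha_3$ black with $\alpha_2$ crossed for $\varepsilon=1$, all nodes white with $\alpha_2$ crossed for $\varepsilon=-1$. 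The hard part will be step (i) above degree $0$: organizing the prolongation in terms of the $\soa\oplus\soa$-modules $S$ and $R$ and checking that nothing survives in degree $3$. But this is a finite, mechanical computation of exactly the type already carried out for the $(3,6)$ example, so I expect no essentially new difficulty; if desired, (ii)--(iii) can also be shortcut by citing the known list of $2$-step nilpotent algebras with simple Tanaka prolongation.
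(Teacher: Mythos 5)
Your proposal follows essentially the same route as the paper: verify that the displayed $(\rho,\tau,\omega)$ solve the magical equation \eqref{maga}, invoke Corollary \ref{cruco} to identify $\mathfrak{aut}({\mathcal D})$ with the Tanaka prolongation of $\mathfrak{n}_\minu=R\oplus S$, and then compute that prolongation and recognize it as the stated real form of $C_3$ with its $|2|$-grading. The extra detail you supply (the quaternionic/para-quaternionic Heisenberg description of $\mathfrak{n}_\minu$, the dimension count $3+4+7+4+3=21$, and the root-theoretic identification of the crossed node) is a correct elaboration of the computation the paper leaves implicit.
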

\begin{remark}
  When $\varepsilon=1$ the flat parabolic geometry described in the above theorem is the lowest dimensional example of the \emph{quaternionic contact} geometry considered by Biquard \cite{bicquard}.  
  \end{remark}

 \section{Application: Obtaining the exceptionals from contactifications of spin representations; the $\mathfrak{f}_4$ case}

 We will now explain the Cartan realization of the simple exceptional Lie algebra $\mathfrak{f}_4$ in dimension $\bbR^{15}$ mentioned in the introduction.

 The Satake diagrams for the real forms of the complex simple exceptional Lie algebra $\mathfrak{f}_4$ are as follows:\\
 \centerline{\begin{dynkinDiagram}[edge length=.4cm]{F}{****}
   \end{dynkinDiagram},\hspace{0.5cm}
   \begin{dynkinDiagram}[edge length=.4cm]{F}{***o}
   \end{dynkinDiagram},\hspace{0.5cm}
   \begin{dynkinDiagram}[edge length=.4cm]{F}{oooo}\end{dynkinDiagram}.}
 The first diagram corresponds to the \emph{compact} real form of $\mathfrak{f}_4$ an is not interesting for us. The other two diagrams are interesting:
 \begin{enumerate}
 \item The last, \begin{dynkinDiagram}[edge length=.4cm]{F}{oooo}\end{dynkinDiagram}, corresponds to the \emph{split} real form $\mathfrak{f}_I$ , and
   \item the middle one, \begin{dynkinDiagram}[edge length=.4cm]{F}{***o}
   \end{dynkinDiagram}, denoted by $\mathfrak{f}_{II}$ in \cite{CS}, is also interesting, since similarly to $\mathfrak{f}_I$, it defines a \emph{parabolic geometry} in dimension 15. 
 \end{enumerate}
 Crossing the last node on the right in the diagrams for $\mathfrak{f}_I$ or $\mathfrak{f}_{II}$, as in\\
 \centerline{ \tikzset{/Dynkin diagram/fold style/.style={stealth-stealth,thin,
shorten <=1mm,shorten >=1mm}}\begin{dynkinDiagram}[edge length=.5cm]{F}{ooot}
   \end{dynkinDiagram} \hspace{0.5cm} or \hspace{0.5cm} \tikzset{/Dynkin diagram/fold style/.style={stealth-stealth,thin,
shorten <=1mm,shorten >=1mm}}\begin{dynkinDiagram}[edge length=.5cm]{F}{***t}
   \end{dynkinDiagram},} we see that in both algebras there exist \emph{parabolic subalgebras} $\mathfrak{p}_I$ or $\mathfrak{p}_{II}$, respectively, of dimension 37, $\dim(\mathfrak{p}_I)=\dim(\mathfrak{p}_{II})=37$. In both respective cases, these choices of parabolics, define similar gradations in the corresponding real forms $\mathfrak{f}_I$, $\mathfrak{f}_{II}$, of the simple exceptional Lie $\mathfrak{f}_4$:
$$
 \mathfrak{f}_A=\mathfrak{n}_{-2A}\oplus\mathfrak{n}_{-1A}\oplus\mathfrak{n}_{0A}\oplus\mathfrak{n}_{1A}\oplus\mathfrak{n}_{2A}\quad \mathrm{for}\quad A=I,II,
$$
 with
 $$\mathfrak{n}_{-A}=\mathfrak{n}_{-2A}\oplus\mathfrak{n}_{-1A}\quad \mathrm{for}\quad A=I,II,$$
 being 2-step nilpotent and having grading components $\mathfrak{n}_{-2A}$ and $\mathfrak{n}_{-1A}$ of respective dimension $r_A=7$ and $s_A=8$,
 $$r_A=\dim(\mathfrak{n}_{-2A})=7,\quad\quad s_A=\dim(\mathfrak{n}_{-1A})=8\quad \mathrm{for}\quad A=I,II.$$
 The Lie algebra $\mathfrak{n}_{0A}$ in the Tanaka prolongation of $\mathfrak{n}_{-A}$ up to $0^{th}$ order is
 \begin{enumerate}
 \item $\mathfrak{n}_{0I}=\bbR\oplus\soa(4,3)$ in the case of $\mathfrak{f}_I$, and
   \item $\mathfrak{n}_{0II}=\bbR\oplus\soa(0,7)$ in the case of $\mathfrak{f}_{II}$.
 \end{enumerate}
 Thus, from the analysis performed here, we see that there exists two different 2-step filtered structures $(M_I,{\mathcal D}_I)$ and $(M_{II},{\mathcal D}_{II})$, both in dimension 15, with the respective $F_I$-symmetric, or $F_{II}$-symmetric flat models, realized on $M_I=F_I/P_I$ or $M_{II}=F_{II}/P_{II}$. Here $F_I$ and $F_{II}$ denote the real Lie groups whose Lie algebras are $\mathfrak{f}_I$ and $\mathfrak{f}_{II}$, respectively. Similarly $P_I$ and $P_{II}$ are parabolic subgroups of respective $F_i$ and $F_{II}$, whose Lie algebras are $\mathfrak{p}_I$ and $\mathfrak{p}_{II}$. Recalling that each of the real groups $\sog(4,3)$ and $\sog(0,7)$ has \emph{two} real irreducible representations $\rho$ in dimension $s=8$ and $\tau$ in dimension $r=7$, with the 8-dimensional representation $\rho$ being the \emph{spin} representation of either $\sog(4,3)$ or $\sog(0,7)$, we can now give the explicit realizations of the ${\bf F}_4$-symmetric structures $(M_A,{\mathcal D}_A)$ for $A=I,II$.

 \subsection{Cartan's realization of $\mathfrak{f}_I$}\label{41}
 The plan is to start with the Lie algebra $\mathfrak{n}_{00}=\bbR\oplus\soa(4,3)$, as in the crossed Satake diagram  \tikzset{/Dynkin diagram/fold style/.style={stealth-stealth,thin,
shorten <=1mm,shorten >=1mm}}\begin{dynkinDiagram}[edge length=.5cm]{F}{ooot}
 \end{dynkinDiagram}
  of $\mathfrak{f}_I$, and its two representations: \begin{itemize}
 \item a representaion $(\rho,S=\bbR^8)$, corresponding to the spin representation of $\sog(4,3)$ in $(s=8)$-dimensional space $\mathfrak{n}_{-1}=S$ of real Pauli spinors, and
   \item a representation $(\tau,R=\bbR^7)$, corresponding to the vectorial representation of $\sog(4,3)$ in $(r=7)$-dimensional space $\mathfrak{n}_{-2}=R$ of vectors in $\bbR^{(4,3)}$.
 \end{itemize}
 Having these two representations of $\mathfrak{n}_{00}=\bbR\oplus\soa(4,3)$ in the same basis, we will then solve the equations \eqref{maga} for the map $\omega\in\mathrm{Hom}(\bigwedge^2S,R)$ which will give us the commutators between elements in $\mathfrak{n}_{-1}$. This via Corollary \ref{cruco} will provide the explicit realization of the 15-dimensional contactification $(M,{\mathcal D})$ with the exception simple Lie algebra $F_I$ as its symmetry.

Actually, the passage from $\rho$ to $\tau$ in the above plan, is a bit tricky, since we need to have these representations expressed in the same basis. To handle with this obstacle, we will start with the spin representation $\rho$, in the space of \emph{Pauli spinors} $S$, and then we will use the fact that the skew representation $\rho\dz\rho$ in the space of the bispinors $\bigwedge^2S$ decomposes as
 $$\textstyle \bigwedge^2S=\bigwedge_{21}\oplus\bigwedge_{7},$$ 
 where $\bigwedge_{21}$ is the 21-dimensional \emph{adjoint representation} of $\sog(4,3)$ and $\bigwedge_7$ is its 7-dimensional \emph{vectorial representation} $\tau$. In this way we will have the two representations $(\rho,S)$ and $(\tau,R=\bigwedge_7)$, expressed in the same basis $\{A_I\}$ of $\bbR\oplus\sog(4,3)$, and will apply the Corollary \ref{cruco} to get the desired $F_I$-symmetric contactification in dimension 15. On doing this we will use notation from Section \ref{spintraut}.
 %./FI_so43_spin_standard_new.nb

According to \cite{traut}, the real 8-dimensional \emph{representation of the Clifford algebra} ${\mathcal C}\ell(4,3)$ is generated by the seven \emph{8-dimensional Pauli matrices}:
$$\begin{aligned}
  &\sigma_1=\sigma_x\otimes\sigma_x\otimes\sigma_x\\
  &\sigma_2=\sigma_x\otimes\sigma_x\otimes\epsilon\\
  &\sigma_3=\sigma_x\otimes\sigma_x\otimes\sigma_z\\
  &\sigma_4=\sigma_x\otimes\epsilon\otimes I\\
  &\sigma_5=\sigma_x\otimes\sigma_z\otimes I\\
  &\sigma_6=\epsilon\otimes I\otimes I\\
  &\sigma_7=\sigma_z\otimes I\otimes I.
\end{aligned}
$$
Using the identities \eqref{iden}, especially the one saying that $\epsilon^2=-I$, one easily finds that the seven Pauli matrices $\sigma_i$, $i=1,2,\dots,7$, satisfy the \emph{Clifford algebra identity}
$$\sigma_i\sigma_j+\sigma_j\sigma_i\,\,=\,\,2g_{ij}\,\,(I\otimes I \otimes I)\,,\quad\quad i,j=1,2,\dots,7,$$
with the coefficients  $g_{ij}$ forming a diagonal  $7\times7$ matrix
%$$g_{ij}=\bma 1&0&0&0&0&0&0\\0&-1&0&0&0&0&0\\ 0&0&1&0&0&0&0\\ 0&0&0&-1&0&0&0\\ 0&0&0&0&1&0&0\\ 0&0&0&0&0&-1&0\\ 0&0&0&0&0&0&1\ema$$
$$\Big(\,\,g_{ij}\,\,\Big)\,\,=\,\,\mathrm{diag}\Big(1,-1,1,-1,1,-1,1\Big),$$
of signature $(4,3)$. Thus, the 8-dimensional Pauli matrices $\sigma_i$, $i=1,\dots,7$, generate the Clifford algebra ${\mathcal C}\ell(4,3)$, and in turn, \emph{by the general theory}, as described in Section \ref{spintraut}, they define the \emph{spin representation} $\rho$ of $\soa(4,3)$ in an 8-dimensional real vector space $S=\bbR^8$ of Pauli(-\emph{Majorana}) spinors.

\subsubsection{The spinorial representation of $\soa(4,3)$}
To be more explicit, let $(i,j)$ be such that $1\leq i<j\leq 7$, and let $I$  be a function 
\be I(i,j)=1+i+\tfrac12(j-3)j\label{Iij}\ee
on such pairs. Note that the function $I$ is a bijection between the 21 pairs $(i,j)$ and the set of 21 natural numbers $I=1,2,\dots, 21$. Consider the twenty one $8\times 8$ real matrices $\sigma_i\sigma_j$ with $1\leq i<j\leq 7$, and a basis $\{A_I\}_{I=1}^{21}$ in the Lie algebra $\soa(4,3)$. Then the spin representation $\rho$ of $\soa(4,3)$ is given by
$$\rho(A_{I(i,j)})=\tfrac12\sigma_i\sigma_j\quad\mathrm{with}\quad 1\leq i<j\leq7.$$
Explicitly, we have:
\be
\begin{array}{lll}
  \rho(A_1)=\tfrac12 I\otimes I \otimes\sigma_z,&\quad  \rho(A_8)=\tfrac12 I\otimes \epsilon\otimes\epsilon,&\quad  \rho(A_{15})=\tfrac12 \sigma_z\otimes \sigma_z \otimes I,\\
  \rho(A_2)=\tfrac12 I\otimes I \otimes\epsilon, &\quad\rho(A_9)=\tfrac12 I\otimes \epsilon \otimes\sigma_z, &\quad  \rho(A_{16})=\tfrac12 \epsilon\otimes \sigma_x \otimes\sigma_x, \\
  \rho(A_3)=\tfrac12 I\otimes I \otimes\sigma_x, &\quad\rho(A_{10})=\tfrac12 I\otimes \sigma_x \otimes I,&\quad  \rho(A_{17})=\tfrac12 \epsilon\otimes \sigma_x \otimes\epsilon,\\
  \rho(A_4)=\tfrac12 I\otimes I \otimes\sigma_x,&\quad  \rho(A_{11})=\tfrac12 \sigma_z\otimes \sigma_x \otimes\sigma_x,&\quad  \rho(A_{18})=\tfrac12 \epsilon\otimes \sigma_x \otimes\sigma_z,\\
  \rho(A_5)=\tfrac12 I\otimes \sigma_z \otimes\epsilon,&\quad  \rho(A_{12})=\tfrac12 \sigma_z\otimes \sigma_x \otimes\epsilon,&\quad \rho(A_{19})=\tfrac12 \epsilon\otimes \epsilon  \otimes I,\\
 \rho(A_6)=\tfrac12 I\otimes \sigma_z \otimes\sigma_z,&\quad   \rho(A_{13})=\tfrac12 \sigma_z\otimes \sigma_x\otimes\sigma_z,&\quad  \rho(A_{20})=\tfrac12 \epsilon\otimes \sigma_z \otimes I,\\
 \rho(A_7)=\tfrac12 I\otimes \epsilon \otimes\sigma_x, &\quad \rho(A_{14})=\tfrac12 \sigma_z\otimes \epsilon\otimes I, &\quad  
  \rho(A_{21})=\tfrac12 \sigma_x\otimes I \otimes I.
\end{array}
\label{f41}\ee
The spin representation $\rho$ of $V_0=\bbR\oplus\soa(4,3)$ needs one generator more. Let us call its $\rho(A_{22})$. We have
$$\rho(A_{22})=\tfrac12 I\otimes I\otimes I.$$
We determine the structure constants $c^K{}_{IJ}$ of $\bbR\oplus\soa(4,3)$ in the basis $A_I$ from 
\be [\,\,\rho(A_I),\,\,\rho(A_J)\,\,]\,\,=\,\,c^K{}_{IJ}\,\,\rho(A_K).\label{strco}\ee

\subsubsection{Obtaining the vectorial representation of $\soa(4,3)$}
Now, we take the space $\bigwedge^2S$ and consider the skew symmetric representation $$\roa=\rho\wedge\rho$$ in it. We will write it in the standard basis $f_\mu$, $\mu=1,\dots, 8$ in $S=\bbR^8$. We have $\rho(A_I)f_\mu=\rho_I{}^\nu{}_\mu f_\nu$. Now, the components of the 28-dimensional representation $\roa=\rho\dz\rho_1$ are
$$\roa_{I}{}^{\mu\nu}{}_{\alpha\beta}\,\,=\,\,\,\,\rho_I{}^{\mu}{}_\alpha\delta^{\nu}{}_\beta+
\delta^{\mu}{}_\alpha\rho_I{}^{\nu}{}_\beta-\rho_I{}^{\nu}{}_\alpha\delta^{\mu}{}_\beta-\delta^{\nu}{}_\alpha\rho_I{}^{\mu}{}_\beta\,\,,$$
and we have
$$\big(\roa(A_I)w\big){}^{\mu\nu}=\roa_{I}{}^{\mu\nu}{}_{\alpha\beta}w^{\alpha\beta}, \quad \forall w^{\alpha\beta}=w^{[\alpha\beta]}.$$
The Casimir operator for this representation is
$${\mathcal C}\,\,=\,\,10\,\,K^{IJ}\,\,\roa(A_I)\,\roa(A_J),$$
where $K^{IJ}$ is the inverse of the Killing form matrix $K_{IJ}=c^L{}_{IM}c^M{}_{JL}$ in the basis $A_I$. Since for the Killing form to be nondegenerate we must restrict to the semisimple part of $\bbR\oplus\soa(4,3)$, here the indices $I,J,K,L,M=1,2\dots,21$, and as always are summed over the repeated indices. One can check that in this basis of $\soa(4,3)$ the Killing form matrix is diagonal, and reads
$$
\big(\,\, K_{IJ}\,\,\big)=
  10\,\mathrm{diag}\Big(\,1,-1,1,1,-1,1,-1,1,-1,1,1,-1,1,-1,1,-1,1,-1,1,-1,1\,\Big).
$$
The Casimir $\mathcal C$ defines the decomposition of the 28-dimensional reducible representation $\roa=\rho_1\dz\rho_1$ onto $$\textstyle \bigwedge^2S=\bigwedge_{21}\oplus\bigwedge_7,$$ where
the 7-dimensional irreducible representation \emph{space} $\bigwedge_7$ \emph{is the eigenspace of the Casimir operator consisting of eigen-bispinors with eigenvalue equal to 6},
$$\textstyle {\mathcal C}\,\,\big(\bigwedge_7\big)\,=6\,\bigwedge_7.$$
Explicitly, in the same basis $A_I$, $I=1,2,\dots,21$, as before, this 7-dimensional representation $(\tau,R=\bigwedge_7)$ of the $\soa(4,3)$ Lie algebra is given by:
\be\begin{aligned}
  &\tau(A_1)=E_{66}-E_{22},\\
  &\tau(A_2)=\tfrac12(E_{23}-E_{32}+E_{25}-E_{52}+E_{36}-E_{63}+E_{56}-E_{65}),\\
  &\tau(A_3)=\tfrac12(E_{23}+E_{32}+E_{25}+E_{52}+E_{36}+E_{63}+E_{56}+E_{65}),\\
  &\tau(A_4)=\tfrac12(E_{23}+E_{32}-E_{25}-E_{52}-E_{36}-E_{63}+E_{56}+E_{65}),\\
  &\tau(A_5)=\tfrac12(E_{23}-E_{32}-E_{25}+E_{52}-E_{36}+E_{63}+E_{56}-E_{65}),\\
  &\tau(A_6)=E_{33}-E_{55},\\
  &\tau(A_7)=\tfrac12(E_{12}-E_{21}-E_{16}+E_{61}-E_{27}+E_{72}+E_{67}-E_{76}),\\
  &\tau(A_8)=\tfrac12(-E_{12}-E_{21}-E_{16}-E_{61}-E_{27}-E_{72}-E_{67}-E_{76}),\\
  &\tau(A_9)=\tfrac12(E_{13}-E_{31}+E_{15}-E_{51}-E_{37}+E_{73}-E_{57}+E_{75}),\\
  &\tau(A_{10})=\tfrac12(E_{13}+E_{31}-E_{15}-E_{51}+E_{37}+E_{73}-E_{57}-E_{75}),\\
  &\tau(A_{11})=\tfrac12(-E_{12}-E_{21}+E_{16}+E_{61}+E_{27}+E_{72}-E_{67}-E_{76}),\\
  &\tau(A_{12})=\tfrac12(E_{12}-E_{21}+E_{16}-E_{61}+E_{27}-E_{72}+E_{67}-E_{76}),\\
  &\tau(A_{13})=\tfrac12(-E_{13}-E_{31}-E_{15}-E_{51}+E_{37}+E_{73}+E_{57}+E_{75}),\\
  &\tau(A_{14})=\tfrac12(-E_{13}+E_{31}+E_{15}-E_{51}-E_{37}+E_{73}+E_{57}-E_{75}),\\
  &\tau(A_{15})=E_{11}-E_{77},\\
  &\tau(A_{16})=\tfrac12(2E_{24}-E_{42}+E_{46}-2E_{64}),\\
  &\tau(A_{17})=\tfrac12(2E_{24}+E_{42}+E_{46}+2E_{64}),\\
  &\tau(A_{18})=\tfrac12(2E_{34}-E_{43}-E_{45}+2E_{54}),\\
  &\tau(A_{19})=\tfrac12(-2E_{34}-E_{43}+E_{45}+2E_{54}),\\
  &\tau(A_{20})=\tfrac12(-2E_{14}+E_{41}+E_{47}-2E_{74}),\\
  &\tau(A_{21})=\tfrac12(2E_{14}+E_{41}-E_{47}-2E_{74}),\\
  &\tau(A_{22})=E_{11}+E_{22}+E_{33}+E_{44}+E_{55}+E_{66}+E_{77},
  \end{aligned}\label{f411}\ee
where $E_{ij}$, $i,j=1,2,\dots,7$, denote $7\times 7$ matrices with zeroes everywhere except the value 1 in the entry $(i,j)$ seating at the crossing of the $i$th row and the $j$th column.

One can check that $$[\,\,\tau(A_I),\,\,\tau(A_J)\,\,]\,\,=\,\,c^K{}_{IJ}\,\,\tau(A_K),$$ with the same structure constants as in \eqref{strco}.

\subsubsection{A contactification with $\mathfrak{f}_I$ symmetry} 
So now we are in the situation of having two representations $(\rho,S)$ and $(\tau,R=\bigwedge_7)$ od $\mathfrak{n}_{00}=\bbR\oplus\soa(4,3)$ and we can try to solve the equation \eqref{maga} for the map $\omega\in\mathrm{Hom}(\bigwedge^2S,R)$. Of course, if we started with some arbitrary $\rho$ and $\tau$ this equation would not have solutions other than 0, but here we expect to have solution, since we know it from the Cartan's PhD thesis \cite{CartanPhd}, and the announcement in Helgason's paper \cite{He}. And indeed there is a solution for a nonzero $\omega$, which when written in the basis $\{f_\mu\}$ in $S$ and $\{e_i\}$ in $R$ is such that it gives the \emph{seven} 2-forms $\omega^i=\tfrac12\omega^i{}_{\mu\nu}\der x^\mu\dz\der x^\nu$, $i=1,\dots,7$, in $N=\bbR^8$ given by: 
\be\begin{aligned}
    \omega^1=&\der x^1\dz\der x^2-\der x^7\dz\der x^8,\\
    \omega^2=&\der x^2\dz\der x^4-\der x^6\dz\der x^8,\\
    \omega^3=&\der x^1\dz\der x^4-\der x^5\dz\der x^8,\\
    \omega^4=&\tfrac12\,\big(\,\der x^1\dz\der x^6-\der x^2\dz\der x^5-\der x^3\dz\der x^8+\der x^4\dz\der x^7\,\big),\\
    \omega^5=&\der x^2\dz\der x^3-\der x^6\dz\der x^7,\\
    \omega^6=&\der x^1\dz\der x^3-\der x^5\dz\der x^7,\\
    \omega^7=&\der x^3\dz\der x^4-\der x^5\dz\der x^6.
    \end{aligned}\label{2forms}\ee
These, via the contactification and the theory summarized in Corollary \ref{cruco} lead to the following theorem.
\begin{theorem}\label{distf1}
   Let $M=\bbR^{15}$ with coordinates $(u^1,\dots,u^7,x^1,\dots ,x^8)$, and consider seven 1-forms $\lambda^i,\dots,\lambda^7$ on $M$ given by
   $$\begin{aligned}
\lambda^1=&\der u^1+ x^1\der x^2- x^7\der x^8,\\
    \lambda^2=&\der u^2+ x^2\der x^4- x^6\der x^8,\\
    \lambda^3=&\der u^3+ x^1\der x^4- x^5\der x^8,\\
    \lambda^4=&\der u^4+\tfrac12\,\big(\, x^1\der x^6- x^2\der x^5-x^3\der x^8+ x^4\der x^7\,\big),\\
    \lambda^5=&\der u^5+x^2\der x^3- x^6\der x^7,\\
    \lambda^6=&\der u^6+x^1\der x^3- x^5\der x^7,\\
    \lambda^7=&\der u^7+x^3\der x^4- x^5\der x^6.
   \end{aligned}$$
   The rank 8 distribution ${\mathcal D}$ on $M$ defined as ${\mathcal D}=\{\mathrm{T}\bbR^{15}\ni X\,\,|\,\,X\hook\lambda^1=\dots=X\hook\lambda^7=0\}$ has its Lie algebra of infinitesimal authomorphism $\mathfrak{aut}(\mathcal D)$ isomorphic to the Tanaka prolongation of $\mathfrak{n}_{\minu}=R\oplus S$, where $(\rho,S=\bbR^8)$ is the spinorial representation \eqref{f41} of $\mathfrak{n}_{00}=\bbR\oplus \soa\big(4,3)$, and $(\tau,R=\bbR^7)$ is the vectorial representation \eqref{f411} of $\mathfrak{n}_{00}$.

  The symmetry algebra $\mathfrak{aut}({\mathcal D})$ is isomorphic to the simple Lie algebra $\mathfrak{f}_I$,
  $$\mathfrak{aut}({\mathcal D})=\mathfrak{f}_I,$$
  having the following natural gradation 
  $$\mathfrak{aut}({\mathcal D})=\mathfrak{n}_{-2}\oplus\mathfrak{n}_{-1}\oplus\mathfrak{n}_0\oplus\mathfrak{n}_1\oplus\mathfrak{n}_2,$$
  with $\mathfrak{n}_{-2}=R$, $\mathfrak{n}_{-1}=S$,
  $$
    \mathfrak{n}_0=\mathfrak{n}_{00}=\bbR\oplus\soa(4,3),$$
  $\mathfrak{n}_{1}=S^*$, $\mathfrak{n}_{2}=R^*$,
 which is inherited from the distribution structure $(M,{\mathcal D})$. The duality signs $*$ at $R^*$ and $S^*$ above are with respect to the Killing form in $\mathfrak{f}_I$.

 The contactification $(M,{\mathcal D})$ is locally a flat model for the parabolic geometry of type $(F_I,P_I)$ related to the following \emph{crossed} Satake diagram  \tikzset{/Dynkin diagram/fold style/.style={stealth-stealth,thin,
shorten <=1mm,shorten >=1mm}}\begin{dynkinDiagram}[edge length=.5cm]{F}{ooot}
 \end{dynkinDiagram}
  of $\mathfrak{f}_I$.
  \end{theorem}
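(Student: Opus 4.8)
The plan is to reduce the statement to a Tanaka prolongation computation via Corollary \ref{cruco}, and then to recognise the resulting abstract graded Lie algebra as $\mathfrak{f}_I$. First one checks the hypotheses of Corollary \ref{cruco} for the data at hand: $\mathfrak{n}_{00}=\bbR\oplus\soa(4,3)$, the representation $\rho$ on $S=\bbR^8$ is the spin representation \eqref{f41}, the representation $\tau$ on $R=\bbR^7$ is the vectorial representation \eqref{f411}, and $\omega\in\mathrm{Hom}(\bigwedge^2 S,R)$ is the map whose seven components are the $2$-forms \eqref{2forms}. One must verify that $(\rho,\omega,\tau)$ solves the magical equation \eqref{maga}, equivalently \eqref{magb}; this is a finite linear check, one identity for each of the $22$ generators $A_I$ and each $\mu,\nu\in\{1,\dots,8\}$, $i\in\{1,\dots,7\}$. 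Conceptually it expresses that $\omega$ is, up to scale, the $\soa(4,3)$-equivariant projection onto the summand $\bigwedge_7\subset\bigwedge^2 S$ in $\bigwedge^2 S=\bigwedge_{21}\oplus\bigwedge_7$, together with the trivial check on the grading generator $A_{22}$. Since the seven $2$-forms in \eqref{2forms} are visibly linearly independent, $\mathrm{Im}(\omega)=R$. Then Lemma \ref{l21} makes $\mathfrak{n}_\minu=R\oplus S$ into a $2$-step nilpotent graded Lie algebra with $[X,Y]=\omega(X,Y)$ for $X,Y\in S$, and Corollary \ref{cruco} identifies $\mathfrak{aut}(\mathcal D)$ with the full Tanaka prolongation $\mathfrak{g}(\mathfrak{n}_\minu)$.

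The core of the argument is the computation of $\mathfrak{g}(\mathfrak{n}_\minu)$. By Lemma \ref{l213} we already have $\bbR\oplus\soa(4,3)\subseteq\mathfrak{n}_0$; the first and most delicate task is to prove equality, i.e. that the pairs $(\phi,\psi)\in\mathfrak{gl}(S)\oplus\mathfrak{gl}(R)$ with $\psi\,\omega(X,Y)=\omega(\phi X,Y)+\omega(X,\phi Y)$ form exactly a $22$-dimensional space — in other words, that the Lie algebra preserving the line of the $R$-valued form $\omega$ is the conformal spin algebra $\bbR\oplus\soa(4,3)$. I would establish this directly, writing the defining linear system in coordinates from \eqref{2forms} and computing its rank. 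One then runs the prolongation \eqref{gt}: compute $\mathfrak{n}_1\subseteq(\mathfrak{n}_0\otimes\mathfrak{n}_{-1}^*)\oplus(\mathfrak{n}_{-1}\otimes\mathfrak{n}_{-2}^*)$ and find $\dim\mathfrak{n}_1=8$ with $\mathfrak{n}_1\simeq S^*$ as an $\mathfrak{n}_0$-module; then $\mathfrak{n}_2$ with $\dim\mathfrak{n}_2=7$ and $\mathfrak{n}_2\simeq R^*$; and finally $\mathfrak{n}_3=0$, so the prolongation terminates. Adding up, $\dim\mathfrak{g}(\mathfrak{n}_\minu)=7+8+22+8+7=52$.

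It remains to identify the $52$-dimensional graded Lie algebra $\mathfrak{g}(\mathfrak{n}_\minu)=\mathfrak{n}_{-2}\oplus\mathfrak{n}_{-1}\oplus\mathfrak{n}_0\oplus\mathfrak{n}_1\oplus\mathfrak{n}_2$. A finite Tanaka prolongation of a $2$-step nilpotent $\mathfrak{n}_\minu$ that is symmetric in the sense $\dim\mathfrak{n}_{-k}=\dim\mathfrak{n}_k$ is simple (as recalled before Theorem \ref{tansym}), so its complexification is $\mathfrak{f}_4$; the grading displays a parabolic $\mathfrak{p}=\mathfrak{n}_0\oplus\mathfrak{n}_1\oplus\mathfrak{n}_2$ of dimension $37$ whose reductive Levi part $\bbR\oplus\soa(4,3)$ acts on $\mathfrak{n}_{-1}$ by the real Majorana spin representation and on $\mathfrak{n}_{-2}$ by the vector representation. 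This is precisely the contact $|2|$-grading of $\mathfrak{f}_4$ obtained by crossing the last node of the Satake diagram, which selects the \emph{split} real form; hence $\mathfrak{aut}(\mathcal D)=\mathfrak{f}_I$ with the grading as stated. The assertion that $(M,\mathcal D)$ is locally the flat model of the parabolic geometry of type $(F_I,P_I)$ is then Tanaka's Theorem \ref{tansym} together with the standard identification of $|k|$-graded simple Lie algebras with flat parabolic geometries \cite{CS}.

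The main obstacle is the prolongation computation, and specifically its two rigidity statements: that $\mathfrak{n}_0$ is no larger than $\bbR\oplus\soa(4,3)$, and that $\mathfrak{n}_3=0$. For a generic pair of representations the prolongation is infinite, so this is exactly where the special features of the pair (spin, vector) of $\soa(4,3)$ — equivalently, Cartan's choice of $\omega$ — are genuinely used. A route that avoids the brute-force prolongation is to verify directly that $\mathfrak{n}_\minu\oplus(\bbR\oplus\soa(4,3))$ is isomorphic to the opposite parabolic $\mathfrak{p}_{opp}$ of $\mathfrak{f}_I$ determined by the crossed node, by matching the bracket relations (a)--(e) of Lemma \ref{l21} against the root-space decomposition of $\mathfrak{f}_4$, and then to invoke the general fact that the Tanaka prolongation of the non-positive part of a parabolically graded simple Lie algebra reproduces that simple Lie algebra.
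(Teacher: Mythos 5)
Your proposal is correct and follows essentially the same route as the paper: check that $(\rho,\omega,\tau)$ with the $\omega^i$ of \eqref{2forms} solves the magical equation \eqref{maga}, invoke Corollary \ref{cruco} to identify $\mathfrak{aut}({\mathcal D})$ with the Tanaka prolongation of $\mathfrak{n}_\minu=R\oplus S$, and recognise that prolongation as $\mathfrak{f}_I$ --- the paper leaves the rigidity computations (that $\mathfrak{n}_0$ is exactly $\bbR\oplus\soa(4,3)$ and that the prolongation terminates after $\mathfrak{n}_2$) implicit, relying, as in your closing paragraph, on the fact that $\mathfrak{n}_\minu\oplus\mathfrak{n}_{00}$ is by construction the opposite parabolic of the $|2|$-grading of $\mathfrak{f}_I$ obtained by crossing the last node of the Satake diagram. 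One minor correction: this is not the \emph{contact} grading of $\mathfrak{f}_4$ (that one has $\dim\mathfrak{n}_{-2}=1$ and comes from crossing the node at the opposite end of the diagram); here $\dim\mathfrak{n}_{-2}=7$, which does not affect your argument since the identification of the real form proceeds through the Levi part $\bbR\oplus\soa(4,3)$.
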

\begin{remark}
Please note, that this is an example of an application of the magical equation \eqref{maga} in which the starting algebra $\mathfrak{n}_{00}$ was big enough, so that its Tanaka prolongation $\mathfrak{n}_0$ counterpart is precisely equal to $\mathfrak{n}_{00}$. This was actually expected from the construction based on the crossed Satake diagram  \tikzset{/Dynkin diagram/fold style/.style={stealth-stealth,thin,
shorten <=1mm,shorten >=1mm}}\begin{dynkinDiagram}[edge length=.5cm]{F}{ooot}
   \end{dynkinDiagram}, which shows that $\mathfrak{n}_0$ of this parabolic geometry is precisely our $\mathfrak{n}_{00}=\bbR\oplus\soa(4,3)$.  
  \end{remark}

\begin{remark}
  One sees that the distribution $\mathcal D$ in $\bbR^{15}$ with $\mathfrak{f}_I$ symmetry presented in Theorem \ref{distf1} looks different that the distribution from our Example \ref{exa4}. It follows, however that both these distributions are locally equivalent, and both have the same simple exceptional Lie algebra  $\mathfrak{f}_I$ as an algebra of their authomorphisms. 
\end{remark}

\subsubsection{Contactification for $\mathfrak{f}_I$: more algebra about $\soa(4,3)$}\label{phiform}
  In our construction of the $\mathfrak{f}_I$ symmetric distribution $\mathcal D$ in Theorem \ref{distf1} the crucial role was played by the 7-dimensional span of 2-forms $\omega^i$, $i=1,2,\dots,7$. If we were given these seven 2-forms, we would produce the $\mathfrak{f}_I$ symmetric distribution $\mathcal D$ by the procedure of contactification.

  It turns out that in $S=\bbR^8$ there is a particular 4-form
  $$\Phi=\tfrac{1}{4!}\Phi_{\mu\nu\rho\sigma}\der x^\mu\dz\der x^\nu\dz\der x^\rho\dz \der x^\sigma$$
  that is $\bbR\oplus \soa(4,3)$ invariant
 $$\rho_I{}^\alpha{}_\mu \Phi_{\alpha\nu\rho\sigma}+\rho_I{}^\alpha{}_\nu \Phi_{\mu\alpha\rho\sigma}+\rho_I{}^\alpha{}_\rho \Phi_{\mu\nu\alpha\sigma}+\rho_I{}^\alpha{}_\sigma \Phi_{\mu\nu\rho\alpha}=S_I\Phi_{\mu\nu\rho\sigma}.$$
It may be represented by:
$$\Phi=h_{ij}\omega^i\dz\omega^j,$$
where $\omega^i$ are given by \eqref{2forms} and
$$\big(\,\,h_{ij}\,\,\big)\,\,=\,\,\tfrac12\,\,
\bma
0&0&0&0&0&0&1\\0&0&0&0&0&-1&0\\0&0&0&0&1&0&0\\0&0&0&1&0&0&0\\0&0&1&0&0&0&0\\0&-1&0&0&0&0&0\\1&0&0&0&0&0&0 \ema,$$
or in words\footnote{Note that since $(h_{ij})$ is a symmetric matrix of signature $(4,3)$, this fact alone shows that the span of seven 2-forms $\omega^i$ is a $7$-dimensional representation space of $\sog(4,3)$. Actually, this fact easily leads to the construction of the double cover $\bbZ_2\to{\bf Spin}(4,3)\to\sog(4,3)$. }: $h_{ij}$, $i,j,=1,2,\dots,7$, \emph{are all zero except} $h_{17}=h_{71}=-h_{26}=-h_{62}=h_{35}=h_{53}=h_{44}=1$.

The form $\Phi$ in full beauty reads:
\be\begin{aligned}
  \tfrac23\Phi\,\,=\,\,2\,\,\Big(\,\,&\der x^1\dz\der x^2\dz\der x^3\dz\der x^4+\der x^5\dz\der x^6\dz\der x^7\dz\der x^8\,\,\Big)-\\
  &\der x^1\dz\der x^2\dz\der x^5\dz\der x^6+\der x^1\dz\der x^3\dz\der x^6\dz\der x^8-\\&\der x^1\dz\der x^4\dz\der x^6\dz\der x^7-\der x^2\dz\der x^3\dz\der x^5\dz\der x^8+\\&\der x^2\dz\der x^4\dz\der x^5\dz\der x^7-\der x^3\dz\der x^4\dz\der x^7\dz\der x^8.
\end{aligned}
\label{4form}\ee
\begin{remark}\label{4formfi}
It is remarkable that this 4-form alone encaptures all the features of the $\mathfrak{f}_I$ symmetric contactification we discussed in the entire Section \ref{41}. By this we mean following:
\begin{enumerate}
\item Consider $N=\bbR^8$ with coordinates $(x^\mu)$, $\mu=1,2,\dots,8$, and the 4-form $$\Phi=\tfrac{1}{4!}\Phi_{\mu\nu\rho\sigma}\der x^\mu\dz\der x^\nu\dz\der x^\rho\dz \der x^\sigma$$ given by \eqref{4form}.
\item Consider an equation
  $$A^\alpha{}_\mu \Phi_{\alpha\nu\rho\sigma}+A^\alpha{}_\nu \Phi_{\mu\alpha\rho\sigma}+A^\alpha{}_\rho \Phi_{\mu\nu\alpha\sigma}+A^\alpha{}_\sigma \Phi_{\mu\nu\rho\alpha}=S\Phi_{\mu\nu\rho\sigma}$$
  for the  real $7\times 7$ matrix $A=(A^\mu{}_\nu)$.
\item For simplicity solve it in two steps: \begin{itemize}
\item First with $S=0$. You obtain 21-dimensional solution space, which will be the \emph{spin representation} $\rho$ of $\soa(4,3)$. It is given $\rho(A)=A$.
  \item Then prove that the only solution with $S\neq 0$ corresponds to $S=4$, and that, modulo the addition of linear combinations of solutions with $S=0$, it is given by $A=\mathrm{Id}_{8\times 8}$. Extend your possible $A$s with $S=0$ to $A$s including $A=\mathrm{Id}_{8\times 8}$. 
\end{itemize}
\item In this way you will show that the stabilizer in $\gla(8,\bbR)$ of the 4-form $\Phi$ is the Lie algebra $\bbR\oplus\soa(4,3)$ in the \emph{spin representation} $\rho$ of Pauli spinors; $\rho(A)=A$. 
\item Then search for a 7-dimensional space of 
  2-forms, spanned say by the 2-forms  $\omega^i=\tfrac12\omega^i{}_{\mu\nu}\der x^\mu\dz\der x^\nu$ satisfying
  $$A^\alpha{}_\mu\,\, \omega^i{}_{\alpha\nu}\,\,+\,\,A^\alpha{}_\nu\,\, \omega^i{}_{\mu\alpha}\,\,=\,\,s^i{}_j \,\,\omega^j{}_{\mu\nu}\,\,$$
  for all $A$s from the spin representation $\rho(A)=A$ of $\bbR\oplus\soa(4,3)$. Here $s^i{}_j$ are auxiliary constants
  \footnote{Note however, that although you look for $\omega^i{}_{\mu\nu}$ with \emph{some} constants $s^i{}_j$, these constants have geometric meaning: comparing with our magical equation \eqref{magb} we see that the $7\times 7$ matrices $(s^i{}_j)$ constitute matrices of the defining representation $\tau$ of $\bbR\oplus\soa(4,3)$.}.
\item This space is uniquely defined by these equations, and after solving them you will get 7 linearly independent 2-forms $(\omega^1,\dots,\omega^7)$ in $N=\bbR^8$.
  \item Contactifying the resulting structure $\big(N,\Span(\omega^1,\dots,\omega^7)\big)$, as we did e.g in Theorem \ref{distf1}, you will get $\mathfrak{f}_I$ symmetric distribution ${\mathcal D}$ in $\bbR^7\to \big(\,M=\bbR^{15}\,\big)\to \big(\,N=\bbR^8\,\big)$.
\end{enumerate}
\end{remark}
\subsection{Realization of $\mathfrak{f}_{II}$}\label{42}
%./FI_so8_spin_standard.nb
It seems that Cartan was only interested in the explicit realization of $\mathfrak{f}_I$. The realization of $\mathfrak{f}_{II}$ can be obtained in the same spirit as we have described in Section \ref{41}. Here without much of explanations since they parallel Section \ref{41}, we only display the main steps leading to this realization.   

 We start with the representation of the Clifford algebra ${\mathcal C}\ell(0,7)$ generated by the seven $\rho$-matrices from \eqref{cl07}. They satisfy
 $$\rho_i\rho_j+\rho_j\rho_i=-2\delta_{ij}I_{8\times 8},\quad i,j=1,\dots,7.$$
 They induce the 8-dimensional representation
 $$\rho:\bbR\oplus\soa(0,7)\to \mathrm{End}(S)$$
   of $n_{00}=\bbR\oplus\soa(0,7)$ in the space $S=\bbR^8$ of real Pauli spinors, generated by the 22 real $8\times 8$ matrices:
$$\begin{aligned}
     &\rho(A_{I(i,j)})=\tfrac12\rho_i\rho_j, \quad 1\leq i<j\leq 7,\\
     &\rho(A_{22})=\tfrac12 (I\otimes I\otimes I),
   \end{aligned}
   $$
   with the index $I=I(i,j)$ given by \eqref{Iij}, and with $I,\sigma_x,\epsilon,\sigma_z$ given by \eqref{pauu}-\eqref{pauu1}.
   Explicitly, in terms of matrices $I,\sigma_x,\epsilon,\sigma_z$ the generators of this spinorial representation of $\soa(0,7)$ are:
\be
\begin{array}{lll}
  \rho(A_1)=-\tfrac12 I\otimes \epsilon \otimes\sigma_z,&\quad  \rho(A_8)=\tfrac12 \epsilon\otimes \sigma_z\otimes\sigma_z,&\quad  \rho(A_{15})=-\tfrac12 I\otimes \epsilon \otimes I,\\
  \rho(A_2)=\tfrac12 I\otimes \epsilon \otimes\sigma_x, &\quad\rho(A_9)=-\tfrac12 \epsilon\otimes \sigma_z \otimes\sigma_x, &\quad  \rho(A_{16})=-\tfrac12 \sigma_x\otimes I \otimes\epsilon, \\
  \rho(A_3)=-\tfrac12 I\otimes I \otimes\epsilon, &\quad\rho(A_{10})=-\tfrac12 I\otimes \sigma_z \otimes \epsilon,&\quad  \rho(A_{17})=-\tfrac12 \sigma_x\otimes\epsilon \otimes\sigma_x,\\
  \rho(A_4)=-\tfrac12 \epsilon\otimes \epsilon\otimes\epsilon,&\quad  \rho(A_{11})=\tfrac12 \epsilon\otimes \sigma_z \otimes I,&\quad  \rho(A_{18})=-\tfrac12 \sigma_x\otimes \epsilon \otimes\sigma_z,\\
  \rho(A_5)=\tfrac12 \epsilon\otimes I \otimes\sigma_x,&\quad  \rho(A_{12})=-\tfrac12 \epsilon\otimes \sigma_x \otimes\sigma_z,&\quad \rho(A_{19})=\tfrac12 \sigma_z\otimes \epsilon  \otimes I,\\
 \rho(A_6)=\tfrac12 \epsilon\otimes I\otimes\sigma_z,&\quad   \rho(A_{13})=\tfrac12 \epsilon\otimes \sigma_x\otimes\sigma_x,&\quad  \rho(A_{20})=\tfrac12 \sigma_z\otimes \sigma_x \otimes \epsilon,\\
 \rho(A_7)=\tfrac12 \epsilon\otimes \sigma_x \otimes I, &\quad \rho(A_{14})=\tfrac12 I \otimes \sigma_x\otimes \epsilon, &\quad  
  \rho(A_{21})=\tfrac12 \sigma_z\otimes \sigma_z \otimes \epsilon.
\end{array}
\label{f42}\ee
We also write down the corresponding generators of the vectorial representation $\tau$, which is the 7-dimensional irreducible component $\bigwedge_7$ of the representation $\rho\dz\rho$,
which decomposes as $\bigwedge^2S=\bigwedge_{21}\oplus\bigwedge_7$. These generators read:
\be
\begin{array}{lll}
  \tau(A_1)=E_{31}-E_{13},&\quad  \tau(A_8)=E_{37}-E_{73},&\quad  \tau(A_{15})=E_{75}-E_{57},\\
  \tau(A_2)=E_{12}-E_{21}, &\quad\tau(A_9)=E_{72}-E_{27}, &\quad  \tau(A_{16})=E_{14}-E_{41}, \\
  \tau(A_3)=E_{32}-E_{23}, &\quad\tau(A_{10})=E_{76}-E_{67},&\quad  \tau(A_{17})=E_{34}-E_{43},\\
  \tau(A_4)=E_{61}-E_{16},&\quad  \tau(A_{11})=E_{51}-E_{15},&\quad  \tau(A_{18})=E_{42}-E_{24},\\
  \tau(A_5)=E_{36}-E_{63},&\quad  \tau(A_{12})=E_{53}-E_{35},&\quad \tau(A_{19})=E_{46}-E_{64},\\
 \tau(A_6)=E_{62}-E_{26},&\quad   \tau(A_{13})=E_{25}-E_{52},&\quad  \tau(A_{20})=E_{47}-E_{74},\\
 \tau(A_7)=E_{17}-E_{71}, &\quad \tau(A_{14})= E_{65}-E_{56},&\quad  
  \tau(A_{21})=E_{54}-E_{45},
\end{array}
\label{f421}\ee
where $E_{ij}$ are $7\times 7$ matrices with all zero entries, except at the $i$th-$j$th entry, where 1 resides.

We are again in a position ready for application of our Lemma \ref{l21}. Given the representations $(\rho,S=\bbR^8)$ and $(\tau,R=\bigwedge_7)$ of $\soa(0,7)$ we solve the magical equation \eqref{maga} for $\omega=\tfrac12\omega^i{}_{\mu\nu}e_i\otimes f^\mu\dz f^\nu$.  In this way we obtain the seven 2-forms $\omega^i=-\tfrac12\omega^i{}_{\mu\nu}\der x^\mu\dz\der x^\nu$ on $N=\bbR^8$, with coordinates $(x^\mu)_{\mu=1}^8$, which read as follows:
\be\begin{aligned}
\omega^1=&-\der x^1\dz\der x^2-\der x^3\dz\der x^4+\der x^5\dz\der x^6+\der x^7\dz\der x^8,\\
\omega^2=&\der x^1\dz\der x^3-\der x^2\dz\der x^4-\der x^5\dz\der x^7+\der x^6\dz\der x^8,\\
\omega^3=&-\der x^1\dz\der x^4-\der x^2\dz\der x^3+\der x^5\dz\der x^8+\der x^6\dz\der x^7,\\
\omega^4=&\der x^1\dz\der x^5+\der x^2\dz\der x^6+\der x^3\dz\der x^7+\der x^4\dz\der x^8,\\
\omega^5=&-\der x^1\dz\der x^6+\der x^2\dz\der x^5+\der x^3\dz\der x^8-\der x^4\dz\der x^7,\\
\omega^6=&\der x^1\dz\der x^7+\der x^2\dz\der x^8-\der x^3\dz\der x^5-\der x^4\dz\der x^6,\\
 \omega^7=&\der x^1\dz\der x^8-\der x^2\dz\der x^7+\der x^3\dz\der x^6-\der x^4\dz\der x^5.
   \end{aligned}\label{2forms2}\ee
These, via the contactification lead to the following theorem.
\begin{theorem}\label{distf2}
   Let $M=\bbR^{15}$ with coordinates $(u^1,\dots,u^7,x^1,\dots ,x^8)$, and consider seven 1-forms $\lambda^i,\dots,\lambda^7$ on $M$ given by
   $$\begin{aligned}
\lambda^1=&\der u^1- x^1\der x^2- x^3\der x^4+ x^5\der x^6+ x^7\der x^8,\\
\lambda^2=&\der u^2+ x^1\der x^3- x^2\der x^4-x^5\der x^7+x^6\der x^8,\\
\lambda^3=&\der u^3- x^1\der x^4- x^2\der x^3+ x^5\der x^8+ x^6\der x^7,\\
\lambda^4=&\der u^4+ x^1\der x^5+ x^2\der x^6+ x^3\der x^7+ x^4\der x^8,\\
\lambda^5=&\der u^5- x^1\der x^6+x^2\der x^5+x^3\der x^8-x^4\der x^7,\\
\lambda^6=&\der u^6+ x^1\der x^7+ x^2\der x^8- x^3\der x^5-x^4\der x^6,\\
 \lambda^7=&\der u^7+ x^1\der x^8- x^2\der x^7+ x^3\der x^6 - x^4\der x^5.
   \end{aligned}$$
   The rank 8 distribution ${\mathcal D}$ on $M$ defined as ${\mathcal D}=\{\mathrm{T}\bbR^{15}\ni X\,\,|\,\,X\hook\lambda^1=\dots=X\hook\lambda^7=0\}$ has its Lie algebra of infinitesimal authomorphisms $\mathfrak{aut}(\mathcal D)$ isomorphic to the Tanaka prolongation of $\mathfrak{n}_{\minu}=R\oplus S$, where $(\rho,S=\bbR^8)$ is the spinorial representation \eqref{f42} of $\mathfrak{n}_{00}=\bbR\oplus \soa\big(0,7)$, and $(\tau,R=\bbR^7)$ is the vectorial representation \eqref{f421} of $\mathfrak{n}_{00}$.

  The symmetry algebra $\mathfrak{aut}({\mathcal D})$ is isomorphic to the simple Lie algebra $\mathfrak{f}_{II}$,
  $$\mathfrak{aut}({\mathcal D})=\mathfrak{f}_{II},$$
  having the following natural gradation 
  $$\mathfrak{aut}({\mathcal D})=\mathfrak{n}_{-2}\oplus\mathfrak{n}_{-1}\oplus\mathfrak{n}_0\oplus\mathfrak{n}_1\oplus\mathfrak{n}_2,$$
  with $\mathfrak{n}_{-2}=R$, $\mathfrak{n}_{-1}=S$,
  $$
    \mathfrak{n}_0=\mathfrak{n}_{00}=\bbR\oplus\soa(0,7),$$
  $\mathfrak{n}_{1}=S^*$, $\mathfrak{n}_{2}=R^*$,
 which is inherited from the distribution structure $(M,{\mathcal D})$. The duality signs $*$ at $R^*$ and $S^*$ above are with respect to the Killing form in $\mathfrak{f}_{II}$.

 The contactification $(M,{\mathcal D})$ is locally a flat model for the parabolic geometry of type $(F_{II},P_{II})$ related to the following \emph{crossed} Satake diagram  \tikzset{/Dynkin diagram/fold style/.style={stealth-stealth,thin,
shorten <=1mm,shorten >=1mm}}\begin{dynkinDiagram}[edge length=.5cm]{F}{***t}
 \end{dynkinDiagram}
  of $\mathfrak{f}_{II}$.
  \end{theorem}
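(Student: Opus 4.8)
\emph{Plan.} The argument repeats Section~\ref{41} and the reasoning behind Theorem~\ref{distf1} almost verbatim, with the split orthogonal algebra $\soa(4,3)$ replaced throughout by its compact partner $\soa(0,7)$, and the split Cayley $4$-form \eqref{4form} replaced by the positive-definite one. First I would check that the triple $(\rho,\omega,\tau)$ --- with $\rho$ the spin representation \eqref{f42} of $\mathfrak{n}_{00}=\bbR\oplus\soa(0,7)$, $\tau$ the vector representation \eqref{f421}, and $\omega\in\mathrm{Hom}(\bigwedge^2S,R)$ the map encoded by the seven $2$-forms \eqref{2forms2} --- satisfies the magical equation \eqref{magb} for each of the $22$ generators $A_I$. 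This is a finite linear check in $\mathrm{End}(\bbR^8)\oplus\mathrm{End}(\bbR^7)$; in fact \eqref{2forms2} was obtained by solving exactly this system, and the Casimir decomposition $\bigwedge^2S=\bigwedge_{21}\oplus\bigwedge_7$ of Section~\ref{41} shows the space of solutions is the single line displayed, with $\mathrm{Im}(\omega)=R$. Since the forms $\lambda^i=\der u^i+\omega^i{}_{\mu\nu}x^\mu\der x^\nu$ in the theorem are precisely those of Corollary~\ref{cruco} for this $\omega$, the corollary yields $\mathfrak{aut}(\mathcal D)\cong\mathfrak{g}(\mathfrak{n}_\minu)$, the Tanaka prolongation of the $2$-step nilpotent $\mathfrak{n}_\minu=R\oplus S$ with the bracket (a)--(e) of Lemma~\ref{l21}, and places $\mathfrak{g}_0=R\oplus S\oplus\mathfrak{n}_{00}$ inside $\mathfrak{g}_0(\mathfrak{n}_\minu)$ with $\mathfrak{n}_{00}\subseteq\mathfrak{n}_0$. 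It then remains to compute $\mathfrak{g}(\mathfrak{n}_\minu)$ grade by grade.

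\emph{Computing $\mathfrak{n}_0$.} A strata-preserving derivation $D$ of $\mathfrak{n}_\minu$ is determined by $A:=D|_S\in\gla(S)$ --- if $A=0$ then $D|_R=0$ too, since $R=\mathrm{Im}(\omega)$ and $D$ is a derivation --- and the derivation identity on $[\mathfrak{n}_{-1},\mathfrak{n}_{-1}]$ reads $\omega(AX,Y)+\omega(X,AY)=B\,\omega(X,Y)$ with $B$ uniquely forced by $A$. Hence $\mathfrak{n}_0$ is the subalgebra of $\gla(S)$ normalising the $7$-plane $\Span(\omega^1,\dots,\omega^7)\subset\bigwedge^2S^*$. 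Exactly as in Remark~\ref{4formfi}, that $7$-plane is the one singled out by a single $\gla(8,\bbR)$-semi-invariant $4$-form $\Phi_{II}=h_{ij}\,\omega^i\dz\omega^j$, now with $(h_{ij})$ of signature $(7,0)$ --- the \emph{definite} Cayley form --- whose stabiliser in $\gla(8,\bbR)$ is $\bbR\oplus\soa(0,7)$ acting in the real spin representation $\rho$. Therefore $\mathfrak{n}_0=\mathfrak{n}_{00}=\bbR\oplus\soa(0,7)$, acting on $\mathfrak{n}_{-1}=S$ by \eqref{f42} and on $\mathfrak{n}_{-2}=R$ by \eqref{f421}.

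\emph{Higher degrees, the real form, the flat model.} The pair $(\mathfrak{n}_\minu,\mathfrak{n}_0)$ is now the non-positive part of the $|2|$-graded complex Lie algebra obtained by crossing the last node on the right of the $F_4$ Dynkin diagram: its degree-$0$ part is $\bbC\oplus\soa(7,\bbC)$, its degree $-1$ is the $8$-dimensional spin module, its degree $-2$ the $7$-dimensional vector module, and its brackets match (a)--(e). A direct degree-by-degree computation shows $\mathfrak{n}_1\cong S^*$, $\mathfrak{n}_2\cong R^*$ (duals with respect to the Killing form) and $\mathfrak{n}_k=0$ for $k\geq3$ (this also follows from prolongation rigidity of such fundamental parabolic gradings of simple Lie algebras, Tanaka~\cite{tanaka}, see also \cite{Alt}), so $\mathfrak{g}(\mathfrak{n}_\minu)=\mathfrak{n}_{-2}\oplus\mathfrak{n}_{-1}\oplus\mathfrak{n}_0\oplus\mathfrak{n}_1\oplus\mathfrak{n}_2$; the count $7+8+22+8+7=52=\dim\mathfrak{f}_4$ confirms that $\mathfrak{g}(\mathfrak{n}_\minu)$ is a real form of $\mathfrak{f}_4$ (its complexification being the Tanaka prolongation of the complexified non-positive part). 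Among the real forms of $\mathfrak{f}_4$ only $\mathfrak{f}_{II}$ ($=F_{4(-20)}$ in the notation of \cite{CS}) admits a parabolic whose reductive Levi part is $\bbR$ together with the \emph{compact} $\soa(7)$ --- indeed its unique proper parabolic has Levi $\bbR\oplus\soa(0,7)$ --- so $\mathfrak{aut}(\mathcal D)\cong\mathfrak{f}_{II}$ with the asserted grading. Finally $\mathfrak{p}_{II}=\mathfrak{n}_0\oplus\mathfrak{n}_1\oplus\mathfrak{n}_2$ is parabolic in the simple $|2|$-graded $\mathfrak{f}_{II}$, the homogeneous space $F_{II}/P_{II}$ carries the bracket-generating distribution with symbol $\mathfrak{n}_\minu$, and by Tanaka's Theorem~\ref{tansym} the contactification $(M,\mathcal D)$ --- being the flat $2$-step filtered structure with that constant symbol --- is locally isomorphic to this flat model.

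\emph{Main obstacle.} The only genuinely nontrivial ingredient is the computation of $\mathfrak{n}_0$, and with it the finiteness of the Tanaka prolongation: one must either invoke the classical fact that the stabiliser in $\gla(8,\bbR)$ of the definite Cayley $4$-form is $\bbR\times\sping(7)$ --- verifying along the way that the $\omega^i$ of \eqref{2forms2} encode the octonionic structure constants, so that $h_{ij}\,\omega^i\dz\omega^j$ is exactly that form --- or run the explicit $\bbR^8$ computation of the normaliser of $\Span(\omega^1,\dots,\omega^7)$ in $\gla(8,\bbR)$ and of the prolongation, precisely as indicated for the split case in Remark~\ref{4formfi}. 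Everything else is bookkeeping strictly parallel to Section~\ref{41}.
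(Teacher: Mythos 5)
Your proposal is correct and follows essentially the same route as the paper, which itself gives no detailed proof here but simply solves the magical equation \eqref{maga} for the triple $(\rho,\omega,\tau)$ of \eqref{f42}, \eqref{2forms2}, \eqref{f421}, invokes Corollary \ref{cruco}, and identifies the Tanaka prolongation with $\mathfrak{f}_{II}$ in parallel with the $\mathfrak{f}_I$ case of Section \ref{41}. Your additional details — reducing $\mathfrak{n}_0$ to the normaliser of the $7$-plane of $2$-forms, identifying it via the definite Cayley $4$-form with stabiliser $\bbR\oplus\soa(0,7)$, and pinning down the real form through the unique parabolic of $\mathfrak{f}_{4(-20)}$ with compact $\soa(7)$ Levi factor — are all sound and in fact make explicit what the paper leaves to the remark on the $4$-form $\Phi$ in \eqref{4formfii}.
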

\begin{remark} In this way we realized the real form $\mathfrak{f}_{II}$ of the simple exceptional complex Lie algebra $\mathfrak{f}_4$ in $M=\bbR^{15}$ as a symmetry algebra of the Pfaffian system $(\lambda^1,\dots,\lambda^7)$. 
  This realization does not appear in Cartan's theses.  
  \end{remark}
\begin{remark}
  Our present case of $\mathfrak{f}_{II}$ also admits description in terms of  a certain $\bbR\oplus\soa(0,7)$ invariant 4-form $\Phi$ in $S=\bbR^8$, analogous to the 4-form $\Phi$ introduced in Section \ref{phiform}, when we discussed $\mathfrak{f}_I$. Skipping the details we only mention that
  now $\Phi$ may be represented by:
$$\Phi=h_{ij}\omega^i\dz\omega^j,$$
where $\omega^i$ are given by \eqref{2forms2} and
$$\big(\,\,h_{ij}\,\,\big)=
\bma
-1&0&0&0&0&0&0\\0&-1&0&0&0&0&0\\0&0&-1&0&0&0&0\\0&0&0&-1&0&0&0\\0&0&0&0&-1&0&0\\0&0&0&0&0&-1&0\\0&0&0&0&0&0&-1 \ema.$$

Explicitly, the form $\Phi$ reads:
\be\begin{aligned}
  -\tfrac16\Phi\,\,=\,\,&\der x^1\dz\der x^2\dz\der x^3\dz\der x^4-\der x^1\dz\der x^2\dz\der x^5\dz\der x^6-\\
  &\der x^1\dz\der x^2\dz\der x^7\dz\der x^8-\der x^1\dz\der x^3\dz\der x^5\dz\der x^7+\\&\der x^1\dz\der x^3\dz\der x^6\dz\der x^8-\der x^1\dz\der x^4\dz\der x^5\dz\der x^8-\\&\der x^1\dz\der x^4\dz\der x^6\dz\der x^7-\der x^2\dz\der x^3\dz\der x^5\dz\der x^8-\\
  &\der x^2\dz\der x^3\dz\der x^6\dz\der x^7+\der x^2\dz\der x^4\dz\der x^5\dz\der x^7-\\&\der x^2\dz\der x^4\dz\der x^6\dz\der x^8-\der x^3\dz\der x^4\dz\der x^5\dz\der x^6-\\&\der x^3\dz\der x^4\dz\der x^7\dz\der x^8+\der x^5\dz\der x^6\dz\der x^7\dz\der x^8.
\end{aligned}
\label{4formfii}\ee
This 4-form alone encaptures all the features of the $\mathfrak{f}_{II}$ symmetric contactification we discussed in the entire Section \ref{42}. In particular analogous statements as in Remark \ref{4formfi}, with now $\soa(4,3)$ replaced by $\soa(0,7)$, apply to the present 4-form $\Phi$.
  \end{remark}

\section{Spinorial representations in dimension 8}
%./E6_EI_with_spin8.nb
%./E6_EIV_with_spin8.nb
Dimension \emph{eight} is quite exceptional, as for example, 8 is the highest possible dimension for the existence of Euclidean Hurwitz algebras, gifting us with the algebra of \emph{octonions}. From the perspective of our paper, which meanders through the realm of simple Lie algebras, eight is \emph{very} special: among all the complex simple Lie algebras, the Dynkin diagram of $\mathfrak{d}_4=\soa(8,\bbC)$ which is \emph{defined} in dimension \emph{eight}, is the most symmetric:\\
\centerline{\begin{dynkinDiagram}[edge length=.4cm]{D}{oooo}
  \end{dynkinDiagram}.}
\noindent
Visibly it has a threefold symmetry $S_3$.

The Lie algebra $\soa(8,\bbC)$ has six real forms. These are: $\soa(8,0)$, $\soa(7,1)$, $\soa(6,2)$, $\soa^*(8)$, $\soa(5,3)$ and $\soa(4,4)$, with the following respective Satake diagrams:\\
\centerline{\begin{dynkinDiagram}[edge length=.4cm]{D}{****}
\end{dynkinDiagram},\hspace{0.5cm}\begin{dynkinDiagram}[edge length=.4cm]{D}{o***}
\end{dynkinDiagram},\hspace{0.5cm}\begin{dynkinDiagram}[edge length=.4cm]{D}{oo**}
\end{dynkinDiagram},\hspace{0.5cm}\begin{dynkinDiagram}[edge length=.4cm]{D}{ooo*}
\end{dynkinDiagram},\hspace{0.5cm}\tikzset{/Dynkin diagram/fold style/.style={stealth-stealth,thin,
shorten <=1mm,shorten >=1mm}}\begin{dynkinDiagram}[edge length=.4cm]{D}{oooo}
\dynkinFold{3}{4}
  \end{dynkinDiagram},\hspace{0.5cm}\begin{dynkinDiagram}[edge length=.4cm]{D}{oooo}\end{dynkinDiagram}.}
We see that among these Satake diagrams the only ones that share the $S_3$ symmetry of the Dynkin diagram of the complex algebra $\mathfrak{d}_4$ are those of the \emph{compact real form} $\soa(8,0)$ and of the \emph{split real form} $\soa(4,4)$.

This $S_3$ symmetry of these two diagrams, indicates that the lowest dimensional real representations of $\soa(4,4)$ and $\soa(8,0)$, may have additional features when compared with spinorial representations of other $\soa(p,q)$s. In particular, for \emph{both} $\soa(4,4)$ and $\soa(8,0)$ we have:
\begin{itemize}
\item Their Dirac representation $(\rho,S)$ in the real vector space $S=\bbR^{16}$ is reducible over $\bbR$ and its split into two real \emph{Weyl} representations $(\rho_+,S_+)$ and $(\rho_-,S_-)$ in the respective vector spaces of \emph{Weyl spinors} $S_+=\bbR^8$ and $S_-=\bbR^8$, which have the same real dimension \emph{eight},
  $$\rho=\rho_+\oplus\rho_-\quad\mathrm{in}\quad S=S_+\oplus S_-, \quad \mathrm{dim}_\bbR S_\pm=8.$$
\item The real Weyl representations $(\rho_\pm,S_\pm)$ are \emph{faithful}, \emph{irreducible} and \emph{nonequivalent}.
\item The defining representations $(\tau,R)$ of $\soa(4,4)$ and $\soa(8,0)$, as the algebra of endomorphisms in the space $R=\bbR^8$ of vectors preserving the bilinear form of respective signatures  $(4,4)$ and $(8,0)$ has the same dimension \emph{eight} as the two Weyl representations $(\rho_\pm,S_\pm)$.
\item The real defining representations $(\tau,R)$ are \emph{irreducible} for both $\soa(4,4)$ and $\soa(8,0)$.
  \item All three real 8-dimensional irreducible representations $(\rho_+,S_+)$, $(\rho_-,S_-)$ and $(\tau,R)$ of, respectively both, $\soa(4,4)$ and $\soa(8,0)$ are \emph{pairwise nonequivalent}.
\end{itemize}

Thus the Lie algebras $\soa(4,4)$ and $\soa(8,0)$ have three real, irreducible and nonequivalent representations $(\rho_+,\rho_-,\tau)$ in the vector space $\bbR^8$ of the \emph{defining} dimension $p+q=8$. For all $\soa(p,q)$ Lie algebras this is the only dimension $p+q$ that such situation with the irreducible representations occurs.

Below, we provide the explicit description of the \emph{triality  representations}  $(\rho_+,\rho_-,\tau)$ separately for $\soa(4,4)$ and $\soa(8,0)$. 

\subsection{Triality representations of $\soa(4,4)$}
%./E6_EI_with_spin8.nb
We recall from Section \ref{spintraut} that the Lie algebra $\soa(4,4)$ admits a representation $\rho$ in the 16-dimensional real vector space $S=\bbR^{16}$ of Dirac spinors. This is obtained by using the Dirac $\gamma$ matrices generating the representation of the Clifford algebra ${\mathcal C}\ell(4,4)$. In terms of the 2-dimensional Pauli matrices $(\sigma_x,\epsilon,\sigma_z,I)$ these look as follows:
  \be \begin{aligned}
     &\gamma_1=\sigma_x\otimes\sigma_x\otimes\sigma_x\otimes\sigma_x\\
  &\gamma_2=\sigma_x\otimes\sigma_x\otimes\sigma_x\otimes\epsilon\\
  &\gamma_3=\sigma_x\otimes\sigma_x\otimes\sigma_x\otimes\sigma_z\\
  &\gamma_4=\sigma_x\otimes\sigma_x\otimes\epsilon\otimes I\\
  &\gamma_5=\sigma_x\otimes\sigma_x\otimes\sigma_z\otimes I\\
  &\gamma_6=\sigma_x\otimes\epsilon\otimes I\otimes I\\
    &\gamma_7=\sigma_x\otimes\sigma_z\otimes I\otimes I\\
     &\gamma_8=\epsilon\otimes I\otimes I\otimes I.
    \end{aligned}\label{dirga}\ee
They satisfy the \emph{Dirac identity} 
\be \gamma_i\gamma_j+\gamma_j\gamma_i=2g_{ij} (I\otimes I\otimes I\otimes I), \quad i,j=1,\dots,8,\label{clifi}\ee
with
$$\big( g_{ij} \big)=\mathrm{diag}(1,-1,1,-1,1,-1,1,-1).$$

The 28 generators of $\soa(4,4)$ in the Majorana-Dirac spinor representation $\rho$ in the space of Dirac spinors $S=\bbR^{16}$ are given by
$$\rho(A_{I(i,j)})=\tfrac12\gamma_i\gamma_j, \quad 1\leq i<j\leq 8,$$
where we again have used the function $I=I(i,j)$ defined in \eqref{Iij}. Note that since now $i<j$ can run from 1 to 8, the function has a range from 1 to 28.
We add to these generators the scaling generator, $\rho(A_{29})$,
$$\rho(A_{29})=\tfrac12 I\otimes I\otimes I\otimes I.$$
This extends the Dirac representation $\rho$ of the Lie algebra $\soa(4,4)$ to the representation of the \emph{homothety} Lie algebra $\mathfrak{coa}(4,4)=\bbR\oplus\soa(4,4)$.

In terms of the 2-dimensional Pauli matrices these generators look like:
\be\label{dir44}
\begin{array}{ll}
  \rho(A_1)= \tfrac12 I \otimes I\otimes I\otimes \sigma_z,&\rho(A_{15})=\tfrac12 I \otimes \sigma_z\otimes \sigma_z\otimes I,\\
  \rho(A_2)= \tfrac12  I \otimes I\otimes I\otimes \epsilon ,&\rho(A_{16})=\tfrac12 I \otimes \epsilon\otimes \sigma_x\otimes \sigma_x,\\
  \rho(A_3)= \tfrac12  I \otimes I\otimes I\otimes \sigma_x ,&\rho(A_{17})=\tfrac12  I \otimes \epsilon\otimes \sigma_x\otimes \epsilon ,\\
  \rho(A_4)=\tfrac12   I \otimes I\otimes \sigma_z\otimes \sigma_x,&\rho(A_{18})=\tfrac12  I \otimes \epsilon\otimes \sigma_x\otimes \sigma_z ,\\
  \rho(A_5)= \tfrac12  I \otimes I\otimes \sigma_z\otimes \epsilon,&\rho(A_{19})=\tfrac12  I \otimes \epsilon\otimes \epsilon\otimes I ,\\
  \rho(A_6)= \tfrac12  I \otimes I\otimes \sigma_z\otimes \sigma_z,&\rho(A_{20})=\tfrac12 I \otimes \epsilon\otimes \sigma_z\otimes I ,\\
  \rho(A_7)= \tfrac12  I \otimes I\otimes \epsilon\otimes \sigma_x,&\rho(A_{21})=\tfrac12 I \otimes \sigma_x\otimes I\otimes I,\\
  \rho(A_8)= \tfrac12 I \otimes I\otimes \epsilon\otimes \epsilon,&\rho(A_{22})=\tfrac12 \sigma_z \otimes \sigma_x\otimes \sigma_x\otimes \sigma_x ,\\
  \rho(A_9)= \tfrac12 I \otimes I\otimes \epsilon\otimes \sigma_z,&\rho(A_{23})=\tfrac12 \sigma_z \otimes \sigma_x\otimes \sigma_x\otimes \epsilon  ,\\
  \rho(A_{10})=\tfrac12  I \otimes I\otimes \sigma_x\otimes I,&\rho(A_{24})=\tfrac12 \sigma_z \otimes \sigma_x\otimes \sigma_x\otimes \sigma_z ,\\
  \rho(A_{11})= \tfrac12  I \otimes \sigma_z\otimes \sigma_x\otimes \sigma_x ,&\rho(A_{25})=\tfrac12 \sigma_z \otimes \sigma_x\otimes \epsilon\otimes I  ,\\
  \rho(A_{12})= \tfrac12 I \otimes \sigma_z\otimes \sigma_x\otimes \epsilon,&\rho(A_{26})=\tfrac12  \sigma_z \otimes \sigma_x\otimes \sigma_z\otimes I ,\\
  \rho(A_{13})= \tfrac12  I \otimes \sigma_z\otimes \sigma_x\otimes \sigma_z,&\rho(A_{27})=\tfrac12  \sigma_z \otimes \epsilon\otimes I\otimes I,\\
   \rho(A_{14})= \tfrac12  I \otimes \sigma_z\otimes \epsilon\otimes I,&\rho(A_{28})=\tfrac12  \sigma_z \otimes \sigma_z\otimes I\otimes I.
\end{array}
\ee
Looking at the first factor in \emph{all} of these generators we observe that it is either $I$ or $\sigma_z$, i.e. it is \emph{diagonal}. This means that this 16-dimensional representation of $\bbR\oplus\soa(4,4)$ is \emph{reducible}. It \emph{splits} onto two real $8$-dimensional \emph{Weyl representations}
$$\rho=\rho_+\oplus\rho_-\quad\mathrm{in}\quad S=S_+\oplus S_-, \quad \mathrm{dim}_\bbR S_\pm=8,$$
in the spaces $S_\pm$ of (Majorana)-Weyl spinors.

On generators of $\soa(4,4)$ these two 8-dimensional representations $\rho_\pm$, are given by:
\be
\begin{array}{ll}
  \rho_\pm(A_1)= \tfrac12  I\otimes I\otimes \sigma_z,&\rho_\pm(A_{15})=\tfrac12  \sigma_z\otimes \sigma_z\otimes I,\\
  \rho_\pm(A_2)= \tfrac12   I\otimes I\otimes \epsilon ,&\rho_\pm(A_{16})=\tfrac12  \epsilon\otimes \sigma_x\otimes \sigma_x,\\
  \rho_\pm(A_3)= \tfrac12   I\otimes I\otimes \sigma_x ,&\rho_\pm(A_{17})=\tfrac12  \epsilon\otimes \sigma_x\otimes \epsilon ,\\
  \rho_\pm(A_4)=\tfrac12    I\otimes \sigma_z\otimes \sigma_x,&\rho_\pm(A_{18})=\tfrac12  \epsilon\otimes \sigma_x\otimes \sigma_z ,\\
  \rho_\pm(A_5)= \tfrac12   I\otimes \sigma_z\otimes \epsilon,&\rho_\pm(A_{19})=\tfrac12  \epsilon\otimes \epsilon\otimes I ,\\
  \rho_\pm(A_6)= \tfrac12  I\otimes \sigma_z\otimes \sigma_z,&\rho_\pm(A_{20})=\tfrac12  \epsilon\otimes \sigma_z\otimes I ,\\
  \rho_\pm(A_7)= \tfrac12   I\otimes \epsilon\otimes \sigma_x,&\rho_\pm(A_{21})=\tfrac12  \sigma_x\otimes I\otimes I,\\
  \rho_\pm(A_8)= \tfrac12 I\otimes \epsilon\otimes \epsilon,&\rho_\pm(A_{22})=\pm\tfrac12  \sigma_x\otimes \sigma_x\otimes \sigma_x ,\\
  \rho_\pm(A_9)= \tfrac12 I\otimes \epsilon\otimes \sigma_z,&\rho_\pm(A_{23})=\pm\tfrac12 \sigma_x\otimes \sigma_x\otimes \epsilon  ,\\
  \rho_\pm(A_{10})=\tfrac12  I\otimes \sigma_x\otimes I,&\rho_\pm(A_{24})=\pm\tfrac12 \sigma_x\otimes \sigma_x\otimes \sigma_z ,\\
  \rho_\pm(A_{11})= \tfrac12   \sigma_z\otimes \sigma_x\otimes \sigma_x ,&\rho_\pm(A_{25})=\pm\tfrac12  \sigma_x\otimes \epsilon\otimes I  ,\\
  \rho_\pm(A_{12})= \tfrac12 \sigma_z\otimes \sigma_x\otimes \epsilon,&\rho_\pm(A_{26})=\pm\tfrac12    \sigma_x\otimes \sigma_z\otimes I ,\\
  \rho_\pm(A_{13})= \tfrac12  \sigma_z\otimes \sigma_x\otimes \sigma_z,&\rho_\pm(A_{27})=\pm\tfrac12   \epsilon\otimes I\otimes I,\\
   \rho_\pm(A_{14})= \tfrac12  \sigma_z\otimes \epsilon\otimes I,&\rho_\pm(A_{28})=\pm\tfrac12    \sigma_z\otimes I\otimes I.
\end{array}\label{rhopm}
\ee
We extend them to $\bbR\oplus\soa(4,4)$ by adding
$$\rho_\pm(A_{29})=\tfrac12 I\otimes I\otimes I.$$
It follows that the Weyl representations $(\rho_\pm,S_\pm)$ of $\soa(4,4)$ are \emph{irreducible} and \emph{nonequivalent}.

They can be used to find yet another real 8-dimensional representation of $\soa(4,4)$. For this one considers the tensor product representation $$\rho_+\otimes\rho_-.$$ This 64-dimensional real representation of $\soa(4,4)$ is \emph{reducible}. It decomposes as:
$$\rho_+\otimes\rho_-=\alpha\oplus\tau\quad\mathrm{in}\quad S_+\otimes S_-=T_{56}\oplus R,\quad\mathrm{with}\quad \dim_\bbR(R)=8,\,\,\dim_\bbR(T_{56})=56,$$
having irreducible components $(\alpha,T_{56})$ and $(\tau,R)$ of respective dimensions 56 and 8. Explicitly, on generators of $\bbR\oplus\soa(4,4)$, the 8-dimensional representation $\tau$ reads:
\be\begin{aligned}
  &\tau(A_1)=E_{66}-E_{22},\\
  &\tau(A_2)=\tfrac12(E_{23}-E_{32}+E_{25}-E_{52}+E_{36}-E_{63}+E_{56}-E_{65}),\\
  &\tau(A_3)=\tfrac12(E_{23}+E_{32}+E_{25}+E_{52}+E_{36}+E_{63}+E_{56}+E_{65}),\\
  &\tau(A_4)=\tfrac12(E_{23}+E_{32}-E_{25}-E_{52}-E_{36}-E_{63}+E_{56}+E_{65}),\\
  &\tau(A_5)=\tfrac12(E_{23}-E_{32}-E_{25}+E_{52}-E_{36}+E_{63}+E_{56}-E_{65}),\\
  &\tau(A_6)=E_{33}-E_{55},\\
  &\tau(A_7)=\tfrac12(E_{12}-E_{21}-E_{16}+E_{61}-E_{27}+E_{72}+E_{67}-E_{76}),\\
  &\tau(A_8)=\tfrac12(-E_{12}-E_{21}-E_{16}-E_{61}-E_{27}-E_{72}-E_{67}-E_{76}),\\
  &\tau(A_9)=\tfrac12(E_{13}-E_{31}+E_{15}-E_{51}-E_{37}+E_{73}-E_{57}+E_{75}),\\
  &\tau(A_{10})=\tfrac12(E_{13}+E_{31}-E_{15}-E_{51}+E_{37}+E_{73}-E_{57}-E_{75}),\\
  &\tau(A_{11})=\tfrac12(-E_{12}-E_{21}+E_{16}+E_{61}+E_{27}+E_{72}-E_{67}-E_{76}),\\
  &\tau(A_{12})=\tfrac12(E_{12}-E_{21}+E_{16}-E_{61}+E_{27}-E_{72}+E_{67}-E_{76}),\\
  &\tau(A_{13})=\tfrac12(-E_{13}-E_{31}-E_{15}-E_{51}+E_{37}+E_{73}+E_{57}+E_{75}),\\
  &\tau(A_{14})=\tfrac12(-E_{13}+E_{31}+E_{15}-E_{51}-E_{37}+E_{73}+E_{57}-E_{75}),\\
  &\tau(A_{15})=E_{11}-E_{77},\\
  &\tau(A_{16})=\tfrac12(E_{24}-E_{42}+E_{28}-E_{82}+E_{46}-E_{64}-E_{68}+E_{86}),\\
  &\tau(A_{17})=\tfrac12(E_{24}+E_{42}+E_{28}+E_{82}+E_{46}+E_{64}+E_{68}+E_{86}),\\
  &\tau(A_{18})=\tfrac12(E_{34}-E_{43}+E_{38}-E_{83}-E_{45}+E_{54}+E_{58}-E_{85}),\\
  &\tau(A_{19})=\tfrac12(-E_{34}-E_{43}-E_{38}-E_{83}+E_{45}+E_{54}+E_{58}+E_{85}),\\
  &\tau(A_{20})=\tfrac12(-E_{14}+E_{41}-E_{18}+E_{81}+E_{47}-E_{74}-E_{78}+E_{87}),\\
&\tau(A_{21})=\tfrac12(E_{14}+E_{41}+E_{18}+E_{81}-E_{47}-E_{74}-E_{78}-E_{87}),\\
&\tau(A_{22})=\tfrac12(-E_{24}-E_{42}+E_{28}+E_{82}+E_{46}+E_{64}-E_{68}-E_{86}),\\
&\tau(A_{23})=\tfrac12(-E_{24}+E_{42}+E_{28}-E_{82}+E_{46}-E_{64}+E_{68}-E_{86}),\\
&\tau(A_{24})=\tfrac12(-E_{34}-E_{43}+E_{38}+E_{83}-E_{45}-E_{54}+E_{58}+E_{85}),\\
&\tau(A_{25})=\tfrac12(E_{34}-E_{43}-E_{38}+E_{83}+E_{45}-E_{54}+E_{58}-E_{85}),\\
&\tau(A_{26})=\tfrac12(E_{14}+E_{41}-E_{18}-E_{81}+E_{47}+E_{74}-E_{78}-E_{87}),\\
&\tau(A_{27})=\tfrac12(-E_{14}+E_{41}+E_{18}-E_{81}-E_{47}+E_{74}-E_{78}+E_{87}),\\
 &\tau(A_{28})=-E_{44}+E_{88},\\
  &\tau(A_{29})=E_{11}+E_{22}+E_{33}+E_{44}+E_{55}+E_{66}+E_{77}+E_{88},
  \end{aligned}\label{tauweyl}\ee
where $E_{ij}$, $i,j=1,2,\dots,8$, denote $8\times 8$ matrices with zeroes everywhere except the value 1 in the entry $(i,j)$ seating at the crossing of the $i$th row and the $j$th column.

 The three real, irreducible, pairwise nonequivalent representations $(\rho_+,\rho_-,\tau)$ of $\soa(4,4)$, given by the formulas \eqref{rhopm} and \eqref{tauweyl} constitute the set of the \emph{triality representations} for $\soa(4,4)$.   

 \subsection{Triality representations of $\soa(8,0)$}
 %./E6_EIV_with_spin8.nb
 To get the real representation $(\rho,S)$ of $\soa(8,0)$ in the space $S=\bbR^{16}$ of Dirac spinors we need the real Dirac $\gamma$ matrices satisfying the \emph{Dirac identity} \eqref{clifi}, but now  with $$g_{ij}=\delta_{ij},$$ where $\delta_{ij}$ is the \emph{Kronecker delta} in 8 dimensions.

 Thus we need to modify the Dirac matrices $\gamma_i$ from \eqref{dirga} to have the proper signature of the metric. This is done in \emph{two} steps \cite{traut}. \emph{First} one puts the \emph{imaginary unit} $i$ in front of some of the Dirac matrices $\gamma_i$ generating the Clifford algebra ${\mathcal C}\ell(4,4)$, to get the proper signature of $(g_{ij})$. Although this produces few \emph{complex} generators, \emph{in step two} one uses them with the others, and modifies them in an algorithmic fashion so that they become all real and still satisfy the \emph{Dirac identity} with the proper signature of $(g_{ij})$. Explicitly, it is done as follows:

 By placing the \emph{imaginary unit} $i$ in front of $\gamma_2$, $\gamma_4$, $\gamma_6$ and $\gamma_8$ in \eqref{dirga} we obtain 8 matrices
 $$\tilde{\gamma}_{2j-1}=\gamma_{2j-1}, \quad \tilde{\gamma}_{2j}=i\gamma_{2j}, \quad j=1,2,3,4,$$
 with $\gamma_i$, $i,1,\dots,8$, in \eqref{dirga}. These constitute generators of the \emph{complex} 16-dimensional representation of the Clifford algebra ${\mathcal C}\ell(8,0)$. We will also need the representation of this Clifford algebra, which is \emph{complex conjugate} of $\tilde{\gamma}$. This is generated by
 $$\overline{\tilde{\gamma}}_{2j-1}=\gamma_{2j-1}, \quad \overline{\tilde{\gamma}}_{2j}=-\gamma_{2j}, \quad j=1,2,3,4.$$
 The Clifford algebra representations generated by the Dirac matrices $\tilde{\gamma}$ and   $\overline{\tilde{\gamma}}$ are \emph{real equivalent}, i.e. there exists a real $16\times 16$ matrix $B$ such that
 $$B\tilde{\gamma}_i=\overline{\tilde{\gamma}}_iB,\quad \forall i=1,\dots,8.$$
 It can be chosen so that
 $$B^2=\mathrm{Id},$$
 where $\mathrm{Id}=I\otimes I\otimes I\otimes I$.

 Explicitly,
 $$B=\sigma_z\otimes\epsilon\otimes\sigma_z\otimes \epsilon.$$
 Using this matrix we define a \emph{new} set of eight $\gamma$ matrices\footnote{The $\gamma$-matrices used below should be considered as new symbols, and should not be confused with the $\soa(4,4)$ $\gamma$-matrices in formulas defining $\tilde{\gamma}$-matrices at the beginning of this section. One should forget about the definition of $\tilde{\gamma}$s in the formula below.} by:
 $$\gamma_i\,=\,(i B+\mathrm{Id})\,\,\tilde{\gamma}_i\,\,(i B+\mathrm{Id})^{-1}, \quad\quad\forall i=1,\dots,8.$$
 One can check that these 8 matrices are \emph{all real} and that they satisfy the desired Dirac identity:
 $$\gamma_i\gamma_j+\gamma_j\gamma_i=2\delta_{ij} (I\otimes I\otimes I\otimes I), \quad i,j=1,\dots,8.$$
 Explicitly we have:
  $$\begin{aligned}
     &\gamma_1=\sigma_x\otimes\sigma_x\otimes\sigma_x\otimes\sigma_x\\
  &\gamma_2=-\epsilon\otimes\sigma_z\otimes\epsilon\otimes I\\
  &\gamma_3=\sigma_x\otimes\sigma_x\otimes\sigma_x\otimes\sigma_z\\
  &\gamma_4=\epsilon\otimes\sigma_z\otimes\sigma_x\otimes \epsilon\\
  &\gamma_5=\sigma_x\otimes\sigma_x\otimes\sigma_z\otimes I\\
  &\gamma_6=-\epsilon\otimes I\otimes \sigma_z\otimes \epsilon\\
    &\gamma_7=\sigma_x\otimes\sigma_z\otimes I\otimes I\\
     &\gamma_8=\sigma_x\otimes \epsilon\otimes \sigma_z\otimes \epsilon.
    \end{aligned}$$
The 28 generators of $\soa(8,0)$ in the Majorana-Dirac spinor representation $\rho$ in the space of Dirac spinors $S=\bbR^{16}$ are given by
$$\rho(A_{I(i,j)})=\tfrac12\gamma_i\gamma_j, \quad 1\leq i<j\leq 8,$$
where again we have used the function $I=I(i,j)$ defined in \eqref{Iij}. Note that since now $i<j$ can run from 1 to 8, the function has a range from 1 to 28.
We add to this the scaling generator, $\rho(A_{29})$, extending the Lie algebra $\soa(4,4)$ to $\mathfrak{coa}(4,4)$, given by
$$\rho(A_{29})=\tfrac12 I\otimes I\otimes I\otimes I.$$
In terms of the 2-dimensional Pauli matrices these generators look like:
\be\label{dir80}
\begin{array}{ll}
  \rho(A_1)= -\tfrac12 \sigma_z \otimes \epsilon\otimes \sigma_z\otimes \sigma_x,&\rho(A_{15})=-\tfrac12 \sigma_z \otimes \sigma_x\otimes I\otimes \epsilon,\\
  \rho(A_2)= \tfrac12  I \otimes I\otimes I\otimes \epsilon ,&\rho(A_{16})=\tfrac12 I \otimes \epsilon\otimes \sigma_x\otimes \sigma_x,\\
  \rho(A_3)= \tfrac12  \sigma_z \otimes \epsilon\otimes \sigma_z\otimes \sigma_z ,&\rho(A_{17})=\tfrac12  \sigma_z \otimes I  \otimes \epsilon\otimes I ,\\
  \rho(A_4)=\tfrac12   \sigma_z \otimes \epsilon\otimes I\otimes \sigma_z,&\rho(A_{18})=\tfrac12  I \otimes \epsilon\otimes \sigma_x\otimes \sigma_z ,\\
  \rho(A_5)= -\tfrac12  I \otimes I\otimes \sigma_z\otimes \epsilon,&\rho(A_{19})=-\tfrac12  \sigma_z \otimes I\otimes \sigma_x\otimes \epsilon ,\\
  \rho(A_6)= -\tfrac12  \sigma_z \otimes \epsilon\otimes I\otimes \sigma_x,&\rho(A_{20})=\tfrac12 I \otimes \epsilon\otimes \sigma_z\otimes I ,\\
  \rho(A_7)= \tfrac12  I \otimes I\otimes \epsilon\otimes \sigma_x,&\rho(A_{21})=\tfrac12 \sigma_z \otimes \sigma_z\otimes \sigma_z\otimes \epsilon,\\
  \rho(A_8)= -\tfrac12 \sigma_z \otimes \epsilon\otimes \sigma_x\otimes I,&\rho(A_{22})=\tfrac12 I \otimes \sigma_z\otimes \epsilon\otimes \sigma_z ,\\
  \rho(A_9)= \tfrac12 I \otimes I\otimes \epsilon\otimes \sigma_z,&\rho(A_{23})=-\tfrac12 \sigma_z \otimes \sigma_x\otimes \sigma_x\otimes \epsilon  ,\\
  \rho(A_{10})=\tfrac12  \sigma_z \otimes \epsilon\otimes \epsilon\otimes \epsilon,&\rho(A_{24})=-\tfrac12 I \otimes \sigma_z\otimes \epsilon\otimes \sigma_x ,\\
  \rho(A_{11})= -\tfrac12  \sigma_z \otimes \sigma_x\otimes \epsilon\otimes \sigma_z ,&\rho(A_{25})=-\tfrac12 \sigma_z \otimes \sigma_x\otimes \epsilon\otimes I  ,\\
  \rho(A_{12})= -\tfrac12 I \otimes \sigma_z\otimes \sigma_x\otimes \epsilon,&\rho(A_{26})=\tfrac12  I \otimes \sigma_z\otimes I\otimes \epsilon ,\\
  \rho(A_{13})=\tfrac12  \sigma_z \otimes \sigma_x\otimes \epsilon\otimes \sigma_x,&\rho(A_{27})=-\tfrac12  \sigma_z \otimes \epsilon\otimes I\otimes I,\\
   \rho(A_{14})= -\tfrac12  I \otimes \sigma_z\otimes \epsilon\otimes I,&\rho(A_{28})=-\tfrac12  I \otimes \sigma_x\otimes \sigma_z\otimes \epsilon.
\end{array}
\ee
Similarly as in the case of $\soa(4,4)$ this 16-dimensional representation of $\bbR\oplus\soa(4,4)$ is \emph{reducible}, again due to the appearance of $I$ and $\sigma_z$ only as the first factors in the above formulas. It \emph{splits} onto two real $8$-dimensional Weyl representations
$$\rho=\rho_+\oplus\rho_-\quad\mathrm{in}\quad S=S_+\oplus S_-, \quad \mathrm{dim}_\bbR S_\pm=8.$$

On generators of $\soa(8,0)$ these two 8-dimensional representations $\rho_\pm$, are given by:
\be
\begin{array}{ll}
  \rho_\pm(A_1)= \mp\tfrac12 \epsilon\otimes \sigma_z\otimes \sigma_x,&\rho_\pm(A_{15})=\mp\tfrac12  \sigma_x\otimes I\otimes \epsilon,\\
  \rho_\pm(A_2)= \tfrac12  I\otimes I\otimes \epsilon ,&\rho_\pm(A_{16})=\tfrac12 \epsilon\otimes \sigma_x\otimes \sigma_x,\\
  \rho_\pm(A_3)= \pm\tfrac12  \epsilon\otimes \sigma_z\otimes \sigma_z ,&\rho_\pm(A_{17})=\pm\tfrac12   I  \otimes \epsilon\otimes I ,\\
  \rho_\pm(A_4)=\pm\tfrac12   \epsilon\otimes I\otimes \sigma_z,&\rho_\pm(A_{18})=\tfrac12  \epsilon\otimes \sigma_x\otimes \sigma_z ,\\
  \rho_\pm(A_5)= -\tfrac12   I\otimes \sigma_z\otimes \epsilon,&\rho_\pm(A_{19})=\mp\tfrac12  I\otimes \sigma_x\otimes \epsilon ,\\
  \rho_\pm(A_6)= \mp\tfrac12  \epsilon\otimes I\otimes \sigma_x,&\rho_\pm(A_{20})=\tfrac12 \epsilon\otimes \sigma_z\otimes I ,\\
  \rho_\pm(A_7)= \tfrac12 I\otimes \epsilon\otimes \sigma_x,&\rho_\pm(A_{21})=\pm\tfrac12  \sigma_z\otimes \sigma_z\otimes \epsilon,\\
  \rho_\pm(A_8)= \mp\tfrac12 \epsilon\otimes \sigma_x\otimes I,&\rho_\pm(A_{22})=\tfrac12  \sigma_z\otimes \epsilon\otimes \sigma_z ,\\
  \rho_\pm(A_9)= \tfrac12  I\otimes \epsilon\otimes \sigma_z,&\rho_\pm(A_{23})=\mp\tfrac12  \sigma_x\otimes \sigma_x\otimes \epsilon  ,\\
  \rho_\pm(A_{10})=\pm\tfrac12  \epsilon\otimes \epsilon\otimes \epsilon,&\rho_\pm(A_{24})=-\tfrac12 \sigma_z\otimes \epsilon\otimes \sigma_x ,\\
  \rho_\pm(A_{11})= \mp\tfrac12  \sigma_x\otimes \epsilon\otimes \sigma_z ,&\rho_\pm(A_{25})=\mp\tfrac12 \sigma_x\otimes \epsilon\otimes I  ,\\
  \rho_\pm(A_{12})= -\tfrac12 \sigma_z\otimes \sigma_x\otimes \epsilon,&\rho_\pm(A_{26})=\tfrac12  \sigma_z\otimes I\otimes \epsilon ,\\
  \rho_\pm(A_{13})=\pm\tfrac12  \sigma_x\otimes \epsilon\otimes \sigma_x,&\rho_\pm(A_{27})=\mp\tfrac12   \epsilon\otimes I\otimes I,\\
   \rho_\pm(A_{14})= -\tfrac12  \sigma_z\otimes \epsilon\otimes I,&\rho_\pm(A_{28})=-\tfrac12  \sigma_x\otimes \sigma_z\otimes \epsilon.
\end{array}\label{weylso8}
\ee
We extend them em to $\bbR\oplus\soa(8,0)$ by adding
$$\rho_\pm(A_{29})=\tfrac12 I\otimes I\otimes I.$$
It follows that the Weyl representations $(\rho_\pm,S_\pm)$ of $\soa(4,4)$ are \emph{irreducible} and \emph{nonequivalent}.

We use them to find the defining representation $(\tau,R)$ of $\soa(8,0)$ in the vector space $R=\bbR^8$ of vectors. We again consider the tensor product representation $\rho_+\otimes\rho_-$. It decomposes as:
$$\rho_+\otimes\rho_-=\alpha\oplus\tau\quad\mathrm{in}\quad S_+\otimes S_-=T_{56}\oplus R,\quad\mathrm{with}\quad \dim_\bbR(R)=8,\,\,\dim_\bbR(T_{56})=56,$$
having irreducible components $(\alpha,T_{56})$ and $(\tau,R)$ of respective dimensions 56 and 8. Explicitly, on generators $A_I$ of $\bbR\oplus\soa(8,0)$, the 8-dimensional representation $\tau$ reads:
\be
\begin{array}{llll}
  \tau(A_1)=E_{38}-E_{83},&\quad  \tau(A_8)=E_{35}-E_{53},&\quad  \tau(A_{15})=E_{52}-E_{25},&\quad  \tau(A_{22})=E_{68}-E_{86},\\
  \tau(A_2)=E_{78}-E_{87}, &\quad\tau(A_9)=E_{75}-E_{57}, &\quad  \tau(A_{16})=E_{18}-E_{81},&\quad  \tau(A_{23})=E_{36}-E_{63}, \\
  \tau(A_3)=E_{37}-E_{73}, &\quad\tau(A_{10})=E_{54}-E_{45},&\quad  \tau(A_{17})=E_{31}-E_{13},&\quad  \tau(A_{24})=E_{76}-E_{67},\\
  \tau(A_4)=E_{84}-E_{48},&\quad  \tau(A_{11})=E_{28}-E_{82},&\quad  \tau(A_{18})=E_{71}-E_{17},&\quad  \tau(A_{25})=E_{64}-E_{46},\\
  \tau(A_5)=E_{43}-E_{34},&\quad  \tau(A_{12})=E_{32}-E_{23},&\quad \tau(A_{19})=E_{14}-E_{41},&\quad  \tau(A_{26})=E_{56}-E_{65},\\
 \tau(A_6)=E_{47}-E_{74},&\quad   \tau(A_{13})=E_{72}-E_{27},&\quad  \tau(A_{20})=E_{51}-E_{15},&\quad  \tau(A_{27})=E_{26}-E_{62},\\
 \tau(A_7)=E_{58}-E_{85}, &\quad \tau(A_{14})= E_{24}-E_{42},&\quad  
  \tau(A_{21})=E_{21}-E_{12},&\quad  \tau(A_{28})=E_{16}-E_{61},
\end{array}
\label{so8}\ee
where $E_{ij}$, $i,j=1,2,\dots,8$, denote $8\times 8$ matrices with zeroes everywhere except the value 1 in the entry $(i,j)$ seating at the crossing of the $i$th row and the $j$th column.

The three real, irreducible, pairwise nonequivalent representations $(\rho_+,\rho_-,\tau)$ of $\soa(8,0)$ given by the formulas \eqref{weylso8} and \eqref{so8} constitute the set of the \emph{triality representations} for $\soa(8,0)$.   

\section{Application: 2-step graded realizations of real forms of the exceptional Lie algebra $\mathfrak{e}_6$ }
The simple exceptional complex Lie algebra $\mathfrak{e}_6$ has the following \emph{noncompact} real forms
\begin{enumerate}
  \item $\mathfrak{e}_I$, with Satake diagram 
\tikzset{/Dynkin diagram/fold style/.style={stealth-stealth,thin,
    shorten <=1mm,shorten >=1mm}} \begin{dynkinDiagram}[edge length=.5cm]{E}{oooooo}
\end{dynkinDiagram},
\item $\mathfrak{e}_{II}$, with Satake diagram \tikzset{/Dynkin diagram/fold style/.style={stealth-stealth,thin,
    shorten <=1mm,shorten >=1mm}}
\begin{dynkinDiagram}[edge length=.5cm]{E}{oooooo}\invol{1}{6}\invol{3}{5}
\end{dynkinDiagram},
\item $\mathfrak{e}_{III}$, with Satake diagram 
\tikzset{/Dynkin diagram/fold style/.style={stealth-stealth,thin,
shorten <=1mm,shorten >=1mm}}\begin{dynkinDiagram}[edge length=.5cm]{E}{oo***o}\invol{1}{6}%\invol{3}{5}
\end{dynkinDiagram}, and 
 \item $\mathfrak{e}_{IV}$, with Satake diagram  \tikzset{/Dynkin diagram/fold style/.style={stealth-stealth,thin,
    shorten <=1mm,shorten >=1mm}}\begin{dynkinDiagram}[edge length=.5cm]{E}{o****o}
 \end{dynkinDiagram}.
\end{enumerate}
\`Elie Cartan in his theses \cite{CartanPhd, CartanPhdF} mentioned realization of the real form $\mathfrak{e}_I$ in $N=\bbR^{16}$. In the modern language, Cartan's realization is such that $\mathfrak{e}_I$ is the \emph{algebra of authomorphisms} of the flat model of a \emph{parabolic geometry} of type $(E_I,P)$, where the choice of parabolic subgroup in the real form $E_I$ of the exceptional Lie group ${\bf E}_6$ is indicated by the following decoration of the Satake diagram for $\mathfrak{e}_I$:  \tikzset{/Dynkin diagram/fold style/.style={stealth-stealth,thin,
    shorten <=1mm,shorten >=1mm}} \begin{dynkinDiagram}[edge length=.5cm]{E}{ooooot}
\end{dynkinDiagram}. The structure on the 16-dimensional manifold $N=E_I/P$, whose symmetry is $E_I$ is a Majorana-Weyl $\bbR{\bf Spin}(5,5)$ structure, i.e. the reduction of the structure group of the tangent bundle $\mathrm{T}N$ to the $\bbR{\bf Spin}(5,5)\subset\glg(16,\bbR)$ in the irreducible 16-dimensional representation of Majorana-Weyl spinors \cite{traut}. \emph{This geometry, as 1-step graded, is quite different from 2-step graded geometries considered in our paper}. We also mention, that if we wanted to have a realization of, say $\mathfrak{e}_{II}$ or $\mathfrak{e}_{III}$, in the spirit of Cartan's realization of $\mathfrak{e}_I$, i.e. if we crossed one lateral node in the Satake diagram of $\mathfrak{e}_{II}$ or $\mathfrak{e}_{III}$, we would be forced to cross the conjugated lateral root, resulting in the Satake diagrams  \tikzset{/Dynkin diagram/fold style/.style={stealth-stealth,thin,
    shorten <=1mm,shorten >=1mm}}
\begin{dynkinDiagram}[edge length=.5cm]{E}{toooot}\invol{1}{6}\invol{3}{5}
\end{dynkinDiagram} or \tikzset{/Dynkin diagram/fold style/.style={stealth-stealth,thin,
shorten <=1mm,shorten >=1mm}}\begin{dynkinDiagram}[edge length=.5cm]{E}{to***t}\invol{1}{6}%\invol{3}{5}
\end{dynkinDiagram},
which would give realizations of the respective $\mathfrak{e}_{II}$ and $\mathfrak{e}_{III}$ in dimension \emph{twenty four}. This we did in \cite{DJPZ} providing realizations of  $\mathfrak{e}_{II}$ and $\mathfrak{e}_{III}$ as Lie algebras of CR-authomorphisms of certain 24-dimensional CR manifolds of CR dimension 16, and CR (real) codimension 8. The important point of these realizations of these two real forms of $]\mathfrak{e}_e$ was that these geometries were 2-step graded, as in the case of Cartan's realization of $\mathfrak{f}_I$, and they could have been also thought as realizations in terms of the symmetry algebras of the structure $(M,{\mathcal D})$, where $M$ is a certain 24-dimensional real manifold, and $\mathcal D$ is a real rank 16-distribution on $M$ with $[{\mathcal D},{\mathcal D}]=\mathrm{T}M$. Thus these two geometries described by us in \cite {DJPZ} are 2-step graded geometries of distributions. Very much like Cartan's realization of $\mathfrak{f}_I$.

In this section we give the remaining similar realizations of the yet untreated cases of $\mathfrak{e}_{II}$ and $\mathfrak{e}_{III}$. 
\subsection{Realizations of $\mathfrak{e}_{I}$ and $\mathfrak{e}_{IV}$: generalities}
To get realizations  of $\mathfrak{e}_{I}$ and $\mathfrak{e}_{IV}$ in dimension 24, we decorate the Satake diagrams of these two Lie algebras as follows: \tikzset{/Dynkin diagram/fold style/.style={stealth-stealth,thin,
    shorten <=1mm,shorten >=1mm}} \begin{dynkinDiagram}[edge length=.5cm]{E}{toooot}
\end{dynkinDiagram} and  \tikzset{/Dynkin diagram/fold style/.style={stealth-stealth,thin,
    shorten <=1mm,shorten >=1mm}}\begin{dynkinDiagram}[edge length=.5cm]{E}{t****t}
\end{dynkinDiagram}.
These choices of a parabolic subalgebra in the respective $\mathfrak{e}_{I}$ and $\mathfrak{e}_{IV}$ produces the following gradation in these algebras:
$$\mathfrak{e}_A=\mathfrak{n}_{\minu2A}\oplus\mathfrak{n}_{\minu1A}\oplus\mathfrak{n}_{0A}\oplus\mathfrak{n}_{1A}\oplus\mathfrak{n}_{2A}\quad \mathrm{for}\quad A=I,IV,
$$
 with
 $$\mathfrak{n}_{\minu A}=\mathfrak{n}_{\minu2A}\oplus\mathfrak{n}_{\minu1A}\quad \mathrm{for}\quad A=I,IV,$$
 being 2-step nilpotent and having grading components $\mathfrak{n}_{-2A}$ and $\mathfrak{n}_{\minu1A}$ of respective dimension $r_A=8$ and $s_A=16$,
 $$r_A=\dim(\mathfrak{n}_{\minu2A})=8,\quad\quad s_A=\dim(\mathfrak{n}_{\minu1A})=16\quad \mathrm{for}\quad A=I,IV.$$
 The Lie algebra $\mathfrak{n}_{0A}$ in the Tanaka prolongation of $\mathfrak{n}_{\minu A}$ up to $0^{th}$ order is
 \begin{enumerate}
 \item $\mathfrak{n}_{0I}=2\bbR\oplus\soa(4,4)=\bbR\oplus\mathfrak{co}(4,4)$ in the case of $\mathfrak{e}_I$, and
   \item $\mathfrak{n}_{0IV}=2\bbR\oplus\soa(8,0)=\bbR\oplus\mathfrak{co}(8,0)$ in the case of $\mathfrak{e}_{IV}$.
 \end{enumerate}
 The last two statements, (1) and (2), get clear when one looks at the Satake diagrams we have just decorated. If we strip off the crossed nodes from these diagrams we get\begin{dynkinDiagram}[edge length=.4cm]{D}{oooo}\end{dynkinDiagram} and \begin{dynkinDiagram}[edge length=.4cm]{D}{****}
 \end{dynkinDiagram}, clearly the simple part of $\mathfrak{n}_{0A}$s above.

 Because of the grading property $[\mathfrak{n}_{iA},\mathfrak{n}_{jA}]\subset\mathfrak{n}_{(i+j)A}$ in the Lie algebras $\mathfrak{e}_A$, restricting to subalgebras $\mathfrak{n}_{\minu A}$ we see that we have representations $(\rho_A,\mathfrak{n}_{\minu1A})$ and $(\tau_A,\mathfrak{n}_{\minu 2A})$ given by the adjoint action of $\mathfrak{co}(4,4)$ or $\mathfrak{co}(8,0)$ which naturally seat in $\mathfrak{n}_{0A}$, respectively.

 There is no surprise that the representations $(\rho_A,\mathfrak{n}_{\minu1A})$ are the Dirac spinor representations \eqref{dir44} and \eqref{dir80} of the respective $\mathfrak{co}(4,4)$ and $\mathfrak{co}(8,0)$ parts of $\mathfrak{n}_{0A}$s in the 16-dimensional real vector spaces $\mathfrak{n}_{\minu1A}$. As such, these representations are \emph{reducible} and they split each $\mathfrak{n}_{0A}$, $A=I,IV$, onto two irreducible representations $(\rho_{A\pm},\mathfrak{n}_{\minu1A\pm})$ in real 8-dimensional spaces $\mathfrak{n}_{\minu1A\pm}$ of Weyl spinors. This shows that the  \emph{2-step nilpotent Lie algebra} $\mathfrak{n}_{\minu A}$ \emph{is}, for each $A=I,IV$, \emph{a natural representation space for the action of the three triality representations} $(\rho_+,\rho_-,\tau)$. We have,
 $$\begin{aligned}\mathfrak{n}_{\minu A}=&\mathfrak{n}_{\minu2A}\oplus\mathfrak{n}_{\minu1A}=\\
   &\mathfrak{n}_{\minu2A}\oplus\mathfrak{n}_{\minu1A+}\oplus\mathfrak{n}_{\minu1A-},\end{aligned}$$
 and the 8-dimensional real irreducible representations $(\tau_A,\rho_A+,\rho_A-)$ of $\mathfrak{co}(4,4)$ or $\mathfrak{co}(8,0)$ acting in the respective $\mathfrak{n}_{\minu2A}$, $\mathfrak{n}_{\minu1A+}$ and $\mathfrak{n}_{\minu1A-}$.

 We summarize the considerations from this section in the following theorem,
 \begin{theorem} (Natural realization of the triality representations)
   \begin{enumerate} \item The $\soa(4,4)$ triality:\\
     The real form $\mathfrak{e}_I$  of the simple exceptional Lie algebra $\mathfrak{e}_6$, when graded according to the following decoration of its Satake diagram \tikzset{/Dynkin diagram/fold style/.style={stealth-stealth,thin,
    shorten <=1mm,shorten >=1mm}} \begin{dynkinDiagram}[edge length=.5cm]{E}{toooot}
   \end{dynkinDiagram}, has the $\mathfrak{n}_{\minu}$ part as a real 24-dimensional vector space, naturally split onto the three real 8-dimensional components $\mathfrak{n}_{\minu2}$, $\mathfrak{n}_{\minu1+}$ and $\mathfrak{n}_{\minu1-}$,
   $$\mathfrak{n}_{\minu}=\mathfrak{n}_{\minu2}\oplus\mathfrak{n}_{\minu1+}\oplus\mathfrak{n}_{\minu1-}.$$
   This decomposition is $\soa(4,4)$ invariant and consists of components $\mathfrak{n}_{\minu2}$, $\mathfrak{n}_{\minu1+}$ and $\mathfrak{n}_{\minu1-}$, on which the triality representation 
   $$\tau\oplus\rho_+\oplus\rho_-$$
   of $\soa(4,4)$ acts irreducibly.
\item The $\soa(8,0)$ triality:\\
   Likewise, the real form $\mathfrak{e}_{IV}$  of the simple exceptional Lie algebra $\mathfrak{e}_6$, when graded according to the following decoration of its Satake diagram \tikzset{/Dynkin diagram/fold style/.style={stealth-stealth,thin,
    shorten <=1mm,shorten >=1mm}} \begin{dynkinDiagram}[edge length=.5cm]{E}{t****t}
   \end{dynkinDiagram}, has the $\mathfrak{n}_{\minu}$ part as a real 24-dimensional vector space, naturally split onto the three real 8-dimensional components $\mathfrak{n}_{\minu2}$, $\mathfrak{n}_{\minu1+}$ and $\mathfrak{n}_{\minu1-}$,
   $$\mathfrak{n}_{\minu}=\mathfrak{n}_{\minu2}\oplus\mathfrak{n}_{\minu1+}\oplus\mathfrak{n}_{\minu1-}.$$
   This decomposition is $\soa(8,0)$ invariant and consists of components $\mathfrak{n}_{\minu2}$, $\mathfrak{n}_{\minu1+}$ and $\mathfrak{n}_{\minu1-}$, on which the triality representation 
   $$\tau\oplus\rho_+\oplus\rho_-$$
   of $\soa(8,0)$ acts irreducibly.
   \end{enumerate}
   \end{theorem}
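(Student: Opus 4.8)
The plan is to derive everything from the parabolic $\bbZ$-grading of $\mathfrak{e}_6$ that is cut out by crossing the two lateral nodes $\alpha_1,\alpha_6$, and then to identify the three $8$-dimensional graded pieces by a short branching argument. Statements (1) and (2) are proved by one and the same argument; only the real form of the Levi factor changes.

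First I would set up the grading. Crossing $\alpha_1$ and $\alpha_6$ gives the grading element $H$ with $\alpha_i(H)=\delta_{i1}+\delta_{i6}$; since the highest root of $\mathfrak{e}_6$ has $\alpha_1$- and $\alpha_6$-coefficients both equal to $1$, this is a grading $\mathfrak{e}_6=\mathfrak{n}_{\minu2}\oplus\mathfrak{n}_{\minu1}\oplus\mathfrak{n}_{0}\oplus\mathfrak{n}_1\oplus\mathfrak{n}_2$, and it is valid verbatim for $\mathfrak{e}_I$ and for $\mathfrak{e}_{IV}$ because the two crossed nodes are white in both Satake diagrams. Reading the diagram, $\mathfrak{n}_0$ is the centraliser of $H$: its semisimple part is the subsystem on the uncrossed nodes $\{\alpha_2,\alpha_3,\alpha_4,\alpha_5\}$, which is a $D_4$, and its centre is $2$-dimensional (rank $6$ minus rank $4$); the Satake data then give $\mathfrak{n}_0=2\bbR\oplus\soa(4,4)$ for $\mathfrak{e}_I$ (white, split) and $\mathfrak{n}_0=2\bbR\oplus\soa(8,0)$ for $\mathfrak{e}_{IV}$ (black, compact). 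A count of the roots of $\mathfrak{e}_6$ by the value of $(c_1,c_6)$ gives $\dim\mathfrak{n}_{\minu2}=8$ and $\dim\mathfrak{n}_{\minu1}=16$, so $\mathfrak{n}_{\minu}=\mathfrak{n}_{\minu2}\oplus\mathfrak{n}_{\minu1}$ is a real $24$-dimensional $2$-step nilpotent algebra, as claimed. To get the splitting I would use the second grading element in the centre of $\mathfrak{n}_0$: its eigenvalues separate the part of $\mathfrak{n}_{\minu1}$ with $c_1=-1$ from the part with $c_6=-1$, giving $\mathfrak{n}_{\minu1}=\mathfrak{n}_{\minu1+}\oplus\mathfrak{n}_{\minu1-}$ with both summands $8$-dimensional; since the $D_4$ factor commutes with the centre, all three of $\mathfrak{n}_{\minu2},\mathfrak{n}_{\minu1+},\mathfrak{n}_{\minu1-}$ are $\soa$-submodules, and each is $D_4$-irreducible because its weights form a single $W(D_4)$-orbit of a minuscule-type weight. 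As the only $8$-dimensional irreducibles of $\soa(8)$ are the vector module $V$ and the two half-spin modules $S_\pm$, each piece is one of $V,S_+,S_-$.

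Next I would prove that the three pieces are pairwise inequivalent, so that $\{\mathfrak{n}_{\minu2},\mathfrak{n}_{\minu1+},\mathfrak{n}_{\minu1-}\}=\{V,S_+,S_-\}$. For grade reasons $[\mathfrak{n}_{\minu1+},\mathfrak{n}_{\minu1+}]=[\mathfrak{n}_{\minu1-},\mathfrak{n}_{\minu1-}]=0$ (their brackets would land in grade $c_6=-2$, resp.\ $c_1=-2$, which is empty), while $\mathfrak{n}_{\minu2}=[\mathfrak{n}_{\minu1},\mathfrak{n}_{\minu1}]=[\mathfrak{n}_{\minu1+},\mathfrak{n}_{\minu1-}]$; so if $\mathfrak{n}_{\minu1+}\cong\mathfrak{n}_{\minu1-}\cong W$ then $\mathfrak{n}_{\minu2}$ would be an $8$-dimensional constituent of $W\otimes W$, which is impossible since $W\otimes W=\mathbf{35}\oplus\mathbf{1}\oplus\mathbf{28}$ for each $W\in\{V,S_+,S_-\}$. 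For the remaining comparison, crossing just $\alpha_1$ (resp.\ just $\alpha_6$) gives a cominuscule $|1|$-grading of $\mathfrak{e}_6$ whose $(-1)$-part is the $16$-dimensional half-spin module of the Levi $D_5$; along the subdiagram inclusion $D_4\subset D_5$ (removing the tail node $\alpha_1$, resp.\ $\alpha_6$) this half-spin restricts to $S_+\oplus S_-$, and since that $(-1)$-part equals precisely $\mathfrak{n}_{\minu2}\oplus\mathfrak{n}_{\minu1-}$ (resp.\ $\mathfrak{n}_{\minu2}\oplus\mathfrak{n}_{\minu1+}$), we get $\mathfrak{n}_{\minu2}\not\cong\mathfrak{n}_{\minu1\mp}$. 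Hence the three $8$-dimensional components carry exactly the three triality representations, which in the preceding two subsections were characterised as the unique triple of real, $8$-dimensional, irreducible, pairwise-nonequivalent representations of $\soa(4,4)$, resp.\ $\soa(8,0)$. It then only remains to match names: $\mathfrak{n}_{\minu2}\cong\tau$ (the defining/vector representation) and $\mathfrak{n}_{\minu1\pm}\cong\rho_\pm$, which I would settle by a highest-weight computation against the $\mathfrak{e}_6$ root data, or by direct comparison with the explicit generators \eqref{tauweyl} and \eqref{so8}. The argument for $\mathfrak{e}_{IV}$ is word for word the same with $\soa(8,0)$ in place of $\soa(4,4)$.

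The hard part is exactly this last identification. Abstractly one learns only that the three components realise $V,S_+,S_-$ in \emph{some} order; attaching $\tau$ to $\mathfrak{n}_{\minu2}$ rather than to one of the $\mathfrak{n}_{\minu1\pm}$ requires fixing a $D_4$-convention and computing, and one must be careful here, because for the star-shaped $D_4$ the phrase ``vector representation'' is meaningful only relative to the choice of one of the three arms, and the two Levi $D_5$'s used above induce two triality-rotated such choices. So the genuine work lies in this bookkeeping, not in any deep structural fact.
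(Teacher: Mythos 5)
Your proposal is correct and follows essentially the same route as the paper: both read the $|2|$-grading off the doubly crossed Satake diagram, identify the $D_4$ Levi factor by stripping the crossed nodes, and recognize the three $8$-dimensional graded pieces as the triality representations. The paper's ``proof'' is only the informal discussion preceding the theorem (it asserts that $\mathfrak{n}_{\minu1}$ ``is, no surprise,'' the Dirac module splitting into two Weyl modules), so your explicit justifications --- the second grading element separating the $c_1=-1$ and $c_6=-1$ root spaces, irreducibility via minuscule $W(D_4)$-orbits, and pairwise inequivalence via the bracket structure together with the $D_5\downarrow D_4$ branching of the half-spin module --- supply rigor that the paper leaves implicit, while your closing caveat about the triality-dependent naming of $\tau,\rho_\pm$ is an honest acknowledgement of a convention the paper also does not pin down.
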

 \subsection{An explicit realization of $\mathfrak{e}_I$ in dimension 24}
 Taking as $(\rho,S)$ the Dirac spinors representation \eqref{dir44} of $\mathrm{co}(4,4)$ in dimension 16, and as $(\tau,R)$ the vectorial representation \eqref{tauweyl} of $\mathrm{co}(4,4)$ in dimension 8, we again are in the situation of a missing $\omega\in\mathrm{Hom}(\bigwedge^2S,R)$ from the triple $(\rho,\tau,\omega)$ described by the magical equation \eqref{maga}. Solving this equation for $\omega$ we obtain $\omega^i{}_{\mu\nu}$, $i=1,\dots,8$, $\mu,\nu=1,\dots,16$, which leads to the eight 2-forms $\omega^i=\tfrac12\omega^i{}_{\mu\nu}\der x^\mu\dz\der x^\nu$ on a 16-dimensional manifold $N=\bbR^{16}$, which read
 \be\begin{aligned}
   \omega^1=\,\,&-\der x^1\dz\der x^{10}+\der x^2\dz\der x^{9}+\der x^7\dz\der x^{16}-\der x^8\dz\der x^{15}\\
   \omega^2=\,\,&-\der x^2\dz\der x^{12}+\der x^4\dz\der x^{10}+\der x^6\dz\der x^{16}-\der x^8\dz\der x^{14}\\
   \omega^3=\,\,&-\der x^1\dz\der x^{12}+\der x^4\dz\der x^{9}+\der x^5\dz\der x^{16}-\der x^8\dz\der x^{13}\\
   \omega^4=\,\,&-\der x^5\dz\der x^{10}+\der x^6\dz\der x^{9}+\der x^7\dz\der x^{12}-\der x^8\dz\der x^{11}\\
   \omega^5=\,\,&-\der x^2\dz\der x^{11}+\der x^3\dz\der x^{10}+\der x^6\dz\der x^{15}-\der x^7\dz\der x^{14}\\
   \omega^6=\,\,&-\der x^1\dz\der x^{11}+\der x^3\dz\der x^{9}+\der x^5\dz\der x^{15}-\der x^7\dz\der x^{13}\\
   \omega^7=\,\,&-\der x^3\dz\der x^{12}+\der x^4\dz\der x^{11}+\der x^5\dz\der x^{14}-\der x^6\dz\der x^{13}\\
   \omega^8=\,\,&-\der x^1\dz\der x^{14}+\der x^2\dz\der x^{13}+\der x^3\dz\der x^{16}-\der x^4\dz\der x^{15}.
 \end{aligned}\label{2form3}\ee
 The manifold $N=\bbR^{16}$ with these 2-forms, after contactification, gives the following Theorem. 
\begin{theorem}\label{diste6}
   Let $M=\bbR^{24}$ with coordinates $(u^1,\dots,u^8,x^1,\dots ,x^{16})$, and consider eight 1-forms $\lambda^1,\dots,\lambda^8$ on $M$ given by
$$\begin{aligned}
   \lambda^1=\,\,&\der u^1-x^1\der x^{10}+ x^2\der x^{9}+ x^7\der x^{16}- x^8\der x^{15}\\
   \lambda^2=\,\,&\der u^2- x^2\der x^{12}+ x^4\der x^{10}+ x^6\der x^{16}- x^8\der x^{14}\\
   \lambda^3=\,\,&\der u^3- x^1\der x^{12}+ x^4\der x^{9}+x^5\der x^{16}- x^8\der x^{13}\\
   \lambda^4=\,\,&\der u^4- x^5\der x^{10}+ x^6\der x^{9}+ x^7\der x^{12}- x^8\der x^{11}\\
   \lambda^5=\,\,&\der u^5-x^2\der x^{11}+ x^3\der x^{10}+ x^6\der x^{15}- x^7\der x^{14}\\
   \lambda^6=\,\,&\der u^6- x^1\der x^{11}+ x^3\der x^{9}+ x^5\der x^{15}- x^7\der x^{13}\\
   \lambda^7=\,\,&\der u^7- x^3\der x^{12}+ x^4\der x^{11}+ x^5\der x^{14}- x^6\der x^{13}\\
   \lambda^8=\,\,&\der u^8- x^1\der x^{14}+ x^2\der x^{13}+ x^3\der x^{16}-x^4\der x^{15}
   \end{aligned}$$
      The rank 16 distribution ${\mathcal D}$ on $M$ defined as ${\mathcal D}=\{\mathrm{T}\bbR^{24}\ni X\,\,|\,\,X\hook\lambda^1=\dots=X\hook\lambda^8=0\}$ has its Lie algebra of infinitesimal authomorphisms $\mathfrak{aut}(\mathcal D)$ isomorphic to the Tanaka prolongation of $\mathfrak{n}_{\minu}=R\oplus S$, where $(\rho,S=\bbR^{16})$ is the Dirac spinors representation \eqref{dir44} of $\mathfrak{n}_{00}=\mathfrak{co}(4,4)$, and $(\tau,R=\bbR^8)$ is the vectorial representation \eqref{tauweyl} of $\mathfrak{n}_{00}$.

  The symmetry algebra $\mathfrak{aut}({\mathcal D})$ is isomorphic to the simple exceptional  Lie algebra $\mathfrak{e}_{I}$,
  $$\mathfrak{aut}({\mathcal D})=\mathfrak{e}_{I},$$
  having the following natural gradation 
  $$\mathfrak{aut}({\mathcal D})=\mathfrak{n}_{-2}\oplus\mathfrak{n}_{-1}\oplus\mathfrak{n}_0\oplus\mathfrak{n}_1\oplus\mathfrak{n}_2,$$
  with $\mathfrak{n}_{-2}=R$, $\mathfrak{n}_{-1}=S=S_+\oplus S_-$,
  $$
    \mathfrak{n}_0=\bbR\oplus\mathfrak{co}(4,4)\supset \mathfrak{n}_{00},$$
    $\mathfrak{n}_{1}=S^*$, $\mathfrak{n}_{2}=R^*$,
    and with the spaces $S_\pm$ being the carrier spaces for the Weyl spinors representations $\rho_\pm$ of $\mathfrak{co}(4,4)$.
 The gradation in $\mathfrak{e}_I$ is inherited from the distribution structure $(M,{\mathcal D})$. The duality signs $*$ at $R^*$ and $S^*$ above are with respect to the Killing form in $\mathfrak{e}_{I}$.

 The contactification $(M,{\mathcal D})$ is locally the flat model for the parabolic geometry of type $(E_{I},P_{I})$ related to the following \emph{crossed} Satake diagram  \tikzset{/Dynkin diagram/fold style/.style={stealth-stealth,thin,
    shorten <=1mm,shorten >=1mm}} \begin{dynkinDiagram}[edge length=.5cm]{E}{toooot}
\end{dynkinDiagram}.
  \end{theorem}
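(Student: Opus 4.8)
The plan is to feed the data of the theorem into Corollary \ref{cruco}, exactly as was done for the $(3,6)$-distribution and for $\mathfrak{f}_I$ in Theorem \ref{distf1}, and then to identify the resulting Tanaka prolongation with $\mathfrak{e}_I$ by locating the constructed $2$-step nilpotent algebra $\mathfrak{n}_\minu=R\oplus S$ inside $\mathfrak{e}_I$.

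First I would verify that the triple $(\rho,\tau,\omega)$ — with $\rho$ the Dirac spinor representation \eqref{dir44} of $\mathfrak{n}_{00}=\mathfrak{co}(4,4)$ in $S=\bbR^{16}$, $\tau$ the vectorial representation \eqref{tauweyl} in $R=\bbR^8$, and $\omega\in\mathrm{Hom}(\bigwedge^2S,R)$ encoded by the eight $2$-forms \eqref{2form3} — solves the magical equation \eqref{maga} (equivalently \eqref{magb}) on each generator $A_I$, $I=1,\dots,29$. In fact such an $\omega$ is forced up to scale: decomposing $S=S_+\oplus S_-$ into Weyl modules one has $\bigwedge^2S=\bigwedge^2S_+\oplus(S_+\otimes S_-)\oplus\bigwedge^2S_-$, where by triality $\bigwedge^2S_\pm$ are copies of the adjoint module and $S_+\otimes S_-\cong R\oplus T_{56}$ (cf. the decomposition $S_+\otimes S_-=T_{56}\oplus R$ recorded above); since $R$ is inequivalent both to the adjoint and to $T_{56}$, Schur gives $\dim\mathrm{Hom}_{\soa(4,4)}(\bigwedge^2S,R)=1$, and the scaling generator $A_{29}$ imposes no extra constraint. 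Hence it suffices that \eqref{2form3} provides one nonzero solution, whence $\mathrm{Im}(\omega)=R$ and $\lambda^i=\der u^i+\omega^i{}_{\mu\nu}x^\mu\der x^\nu$ are the forms of the statement. Corollary \ref{cruco}, via Lemmas \ref{l21} and \ref{l213}, now says that $(M,{\mathcal D})$ is the flat $2$-step filtered structure with constant nilpotent approximation $\mathfrak{n}_\minu$, that $\mathfrak{n}_{00}=\mathfrak{co}(4,4)\subset\mathfrak{n}_0$, and that $\mathfrak{aut}({\mathcal D})\simeq\mathfrak{g}(\mathfrak{n}_\minu)$, the full Tanaka prolongation.

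The heart of the proof is the identification $\mathfrak{g}(\mathfrak{n}_\minu)=\mathfrak{e}_I$ with the stated grading. I would recognise $\mathfrak{n}_\minu$ as the negative part of the $|2|$-grading of the split real form $\mathfrak{e}_I$ obtained by crossing the two lateral nodes of its Satake diagram (the decoration displayed in the statement): deleting the crossed nodes leaves the $D_4$ diagram of $\soa(4,4)$, whose reductive Levi factor $\bbR\oplus\mathfrak{co}(4,4)=2\bbR\oplus\soa(4,4)$ acts on the two graded pieces $\mathfrak{g}_{-1}$ (dimension $16$, a Dirac module) and $\mathfrak{g}_{-2}$ (dimension $8$, the vector module) with $[\mathfrak{g}_{-1},\mathfrak{g}_{-1}]=\mathfrak{g}_{-2}$ a nonzero $\soa(4,4)$-map; by the uniqueness of $\omega$ established above this graded Lie algebra is isomorphic to our $\mathfrak{n}_\minu$. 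Since the grading is neither abelian nor of contact type ($\dim\mathfrak{g}_{-2}=8>1$), the Tanaka--Yamaguchi prolongation theory (a consequence of \cite{tanaka}; see also \cite{AC,Alt}) gives $\mathfrak{g}(\mathfrak{n}_\minu)=\mathfrak{e}_I$ with $\mathfrak{n}_0=\bbR\oplus\mathfrak{co}(4,4)$ the reductive Levi, $\mathfrak{n}_1\cong S^*$, $\mathfrak{n}_2\cong R^*$ (duality via the Killing form), and $\mathfrak{n}_3=0$; in particular the total dimension is $8+16+30+16+8=78=\dim\mathfrak{e}_6$, and the real form is the split one because its Levi contains the split form $\soa(4,4)$ of $\mathfrak{d}_4$ (alternatively one confirms this by computing the signature of the Killing form on the prolongation). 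A self-contained alternative to invoking the prolongation theorem is to compute $\mathfrak{n}_0$ directly — a strata-preserving derivation is a pair $(D_{-1},D_{-2})$ intertwining $\omega$, and pairwise inequivalence and irreducibility of $\rho_+,\rho_-,\tau$ together with Schur pin $\mathfrak{n}_0$ down to $2\bbR\oplus\soa(4,4)$ — and then to carry out the degree-by-degree prolongation to $\mathfrak{n}_1,\mathfrak{n}_2$ and check $\mathfrak{n}_3=0$.

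Finally the flat-model assertion is immediate from Theorem \ref{tansym}: the flat model of the $2$-step filtration with nilpotent approximation $\mathfrak{n}_\minu$ is $E_I/P_I$ with $\mathrm{Lie}(P_I)=\mathfrak{n}_0\oplus\mathfrak{n}_1\oplus\mathfrak{n}_2$, and by Definition \ref{def1} and Corollary \ref{cruco} our $(M,{\mathcal D})$ is a coordinate realization of precisely that flat model, hence locally isomorphic to it, which is the claimed parabolic geometry of type $(E_I,P_I)$ attached to the crossed Satake diagram. The one genuine obstacle is the prolongation step: proving simultaneously that $\mathfrak{n}_0$ does not exceed $\bbR\oplus\mathfrak{co}(4,4)$ and that the prolongation terminates at degree $2$. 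The triality/Schur argument disposes of the former, while the structural embedding of $\mathfrak{n}_\minu$ into $\mathfrak{e}_I$ is by far the most economical way to see the latter.
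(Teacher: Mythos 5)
Your proposal follows essentially the same route as the paper: solve the magical equation \eqref{maga} for $\omega$ with the Dirac and vector representations of $\mathfrak{co}(4,4)$, feed the result into Corollary \ref{cruco}, and identify the Tanaka prolongation of $\mathfrak{n}_\minu$ with the negative part of the $|2|$-grading of $\mathfrak{e}_I$ determined by crossing the two lateral nodes of its Satake diagram. Your additions — the Schur-lemma argument that $\mathrm{Hom}_{\soa(4,4)}(\bigwedge^2S,R)$ is one-dimensional, and the explicit appeal to Tanaka--Yamaguchi prolongation theory to rule out a larger $\mathfrak{n}_0$ or a nonvanishing $\mathfrak{n}_3$ — are correct sharpenings of steps the paper leaves implicit.
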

\begin{remark} 
Also the $\mathfrak{e}_I$ case, considered in this section, admits a description in terms of  an $\bbR\oplus\soa(4,4)$ invariant 4-form $\Phi$ in $S=\bbR^{16}$. 
  Now $\Phi$ may be represented by:
$$\Phi=h_{ij}\omega^i\dz\omega^j,$$
where $\omega^i$ are given by \eqref{2form3} and
$$\big(\,\,h_{ij}\,\,\big)\,\,=
\,\,\tfrac12\,\,\bma
0&0&0&0&0&0&1&0\\0&0&0&0&0&-1&0&0\\0&0&0&0&1&0&0&0\\0&0&0&0&0&0&0&1\\0&0&1&0&0&0&0&0\\0&-1&0&0&0&0&0&0\\1&0&0&0&0&0&0&0\\0&0&0&1&0&0&0&0 \ema.$$

Explicitly, the form $\Phi$ reads:
\be\begin{aligned}
  \Phi\,\,=\,\,&2\der x^1\dz\der x^2\dz\der x^{11}\dz\der x^{12}-2\der x^1\dz\der x^3\dz\der x^{10}\dz\der x^{12}+\\
  &2\der x^1\dz\der x^4\dz\der x^{10}\dz\der x^{11}+2\der x^1\dz\der x^5\dz\der x^{10}\dz\der x^{14}-\\&\der x^1\dz\der x^6\dz\der x^9\dz\der x^{14}-\der x^1\dz\der x^6\dz\der x^{10}\dz\der x^{13}-\\&\der x^1\dz\der x^6\dz\der x^{11}\dz\der x^{16}+\der x^1\dz\der x^6\dz\der x^{12}\dz\der x^{15}-\\
  &2\der x^1\dz\der x^7\dz\der x^{12}\dz\der x^{14}+2\der x^1\dz\der x^8\dz\der x^{11}\dz\der x^{14}+\\&2\der x^2\dz\der x^3\dz\der x^9\dz\der x^{12}-2\der x^2\dz\der x^4\dz\der x^9\dz\der x^{11}-\\&\der x^2\dz\der x^5\dz\der x^9\dz\der x^{14}-\der x^2\dz\der x^5\dz\der x^{10}\dz\der x^{13}+\\
  &\der x^2\dz\der x^5\dz\der x^{11}\dz\der x^{16}-\der x^2\dz\der x^5\dz\der x^{12}\dz\der x^{15}+\\
  &2\der x^2\dz\der x^6\dz\der x^{9}\dz\der x^{13}+2\der x^2\dz\der x^7\dz\der x^{12}\dz\der x^{13}-\\&2\der x^2\dz\der x^8\dz\der x^{11}\dz\der x^{13}+2\der x^3\dz\der x^4\dz\der x^{9}\dz\der x^{10}-\\&2\der x^3\dz\der x^5\dz\der x^{10}\dz\der x^{16}+2\der x^3\dz\der x^6\dz\der x^{9}\dz\der x^{16}+\\
  &2\der x^3\dz\der x^7\dz\der x^{12}\dz\der x^{16}-\der x^3\dz\der x^8\dz\der x^{9}\dz\der x^{14}+\\&\der x^3\dz\der x^8\dz\der x^{10}\dz\der x^{13}-\der x^3\dz\der x^8\dz\der x^{11}\dz\der x^{16}-\\&\der x^3\dz\der x^8\dz\der x^{12}\dz\der x^{15}+2\der x^4\dz\der x^5\dz\der x^{10}\dz\der x^{15}-\\
  &2\der x^4\dz\der x^6\dz\der x^{9}\dz\der x^{15}+\der x^4\dz\der x^7\dz\der x^{9}\dz\der x^{14}-\\
  &\der x^4\dz\der x^7\dz\der x^{10}\dz\der x^{13}-\der x^4\dz\der x^7\dz\der x^{11}\dz\der x^{16}-\\&\der x^4\dz\der x^7\dz\der x^{12}\dz\der x^{15}+2\der x^4\dz\der x^8\dz\der x^{11}\dz\der x^{15}+\\&2\der x^5\dz\der x^6\dz\der x^{15}\dz\der x^{16}-2\der x^5\dz\der x^7\dz\der x^{14}\dz\der x^{16}+\\
  &2\der x^5\dz\der x^8\dz\der x^{14}\dz\der x^{15}+2\der x^6\dz\der x^7\dz\der x^{13}\dz\der x^{16}-\\&2\der x^6\dz\der x^8\dz\der x^{13}\dz\der x^{15}+2\der x^7\dz\der x^8\dz\der x^{13}\dz\der x^{14}.
 \end{aligned}
\label{4formfiii}\ee
This 4-form is such that its stabilizer in $\gla(16,\bbR)$ is $\mathfrak{n}_0=\bbR\oplus\mathfrak{co}(4,4)$. When restricted to $\mathfrak{n}_{00}=\mathfrak{co}(4,4)$ this stabilizer is given precisely in the Mayorana Dirac spinor representation
$$\rho=\rho_+\oplus\rho_-$$
as in \eqref{dir44}-\eqref{rhopm}. 
\end{remark}

\subsection{An explicit realization of $\mathfrak{e}_{IV}$ in dimension 24}
Similarly as in the previous section we take as $(\rho,S)$ the Dirac spinors representation \eqref{dir80} of $\mathrm{co}(8.0)$ in dimension 16, and as $(\tau,R)$ the vectorial representation \eqref{so8} of $\mathrm{co}(8,0)$ in dimension 8and we search for  $\omega\in\mathrm{Hom}(\bigwedge^2S,R)$ solving the magical equation \eqref{maga}. We obtain $\omega^i{}_{\mu\nu}$, $i=1,\dots,8$, $\mu,\nu=1,\dots,16$, which provides us with the eight 2-forms $\omega^i=\tfrac12\omega^i{}_{\mu\nu}\der x^\mu\dz\der x^\nu$ on a 16-dimensional manifold $N=\bbR^{16}$, which read
 \be\begin{aligned}
   \omega^1=\,\,&\der x^1\dz\der x^{9}+\der x^2\dz\der x^{10}+\der x^3\dz\der x^{11}+\der x^4\dz\der x^{12}-\\&\der x^5\dz\der x^{13}-\der x^6\dz\der x^{14}-\der x^7\dz\der x^{15}-\der x^8\dz\der x^{16}\\
   \omega^2=\,\,&-\der x^1\dz\der x^{10}+\der x^2\dz\der x^{9}+\der x^3\dz\der x^{12}-\der x^4\dz\der x^{11}-\\&\der x^5\dz\der x^{14}+\der x^6\dz\der x^{13}+\der x^7\dz\der x^{16}-\der x^8\dz\der x^{15}\\
   \omega^3=\,\,&-\der x^1\dz\der x^{11}-\der x^2\dz\der x^{12}+\der x^3\dz\der x^{9}+\der x^4\dz\der x^{10}+\\&\der x^5\dz\der x^{15}+\der x^6\dz\der x^{16}-\der x^7\dz\der x^{13}-\der x^8\dz\der x^{14}\\
   \omega^4=\,\,&-\der x^1\dz\der x^{12}+\der x^2\dz\der x^{11}-\der x^3\dz\der x^{10}+\der x^4\dz\der x^{9}+\\&\der x^5\dz\der x^{16}-\der x^6\dz\der x^{15}+\der x^7\dz\der x^{14}-\der x^8\dz\der x^{13}\\
   \omega^5=\,\,&\der x^1\dz\der x^{13}+\der x^2\dz\der x^{14}-\der x^3\dz\der x^{15}-\der x^4\dz\der x^{16}+\\&\der x^5\dz\der x^{9}+\der x^6\dz\der x^{10}-\der x^7\dz\der x^{11}-\der x^8\dz\der x^{12}\\
   \omega^6=\,\,&\der x^1\dz\der x^{14}-\der x^2\dz\der x^{13}-\der x^3\dz\der x^{16}+\der x^4\dz\der x^{15}-\\&\der x^5\dz\der x^{10}+\der x^6\dz\der x^{9}+\der x^7\dz\der x^{12}-\der x^8\dz\der x^{11}\\
   \omega^7=\,\,&\der x^1\dz\der x^{15}-\der x^2\dz\der x^{16}+\der x^3\dz\der x^{13}-\der x^4\dz\der x^{14}+\\&\der x^5\dz\der x^{11}-\der x^6\dz\der x^{12}+\der x^7\dz\der x^{9}-\der x^8\dz\der x^{10}\\
   \omega^8=\,\,&-\der x^1\dz\der x^{16}-\der x^2\dz\der x^{15}-\der x^3\dz\der x^{14}-\der x^4\dz\der x^{13}-\\&\der x^5\dz\der x^{12}-\der x^6\dz\der x^{11}-\der x^7\dz\der x^{10}-\der x^8\dz\der x^{9}.
 \end{aligned}\label{2form4}\ee
Contactifying, we have the following theorem

\begin{theorem}
   Let $M=\bbR^{24}$ with coordinates $(u^1,\dots,u^8,x^1,\dots ,x^{16})$, and consider eight 1-forms $\lambda^1,\dots,\lambda^8$ on $M$ given by

 $$\begin{aligned}
   \lambda^1=\,\,&\der u^1+ x^1\der x^{9}+ x^2\der x^{10}+ x^3 \der x^{11}+ x^4 \der x^{12}- x^5 \der x^{13}- x^6 \der x^{14}- x^7 \der x^{15}- x^8 \der x^{16}\\
   \lambda^2=\,\,&\der u^2- x^1 \der x^{10}+ x^2 \der x^{9}+ x^3 \der x^{12}- x^4 \der x^{11}-  x^5 \der x^{14}+ x^6 \der x^{13}+ x^7 \der x^{16}- x^8 \der x^{15}\\
   \lambda^3=\,\,&\der u^3- x^1 \der x^{11}- x^2 \der x^{12}+ x^3 \der x^{9}+ x^4 \der x^{10}+  x^5 \der x^{15}+ x^6 \der x^{16}- x^7 \der x^{13}- x^8 \der x^{14}\\
   \lambda^4=\,\,&\der u^4- x^1 \der x^{12}+ x^2 \der x^{11}- x^3 \der x^{10}+ x^4 \der x^{9}+  x^5 \der x^{16}- x^6 \der x^{15}+ x^7 \der x^{14}- x^8 \der x^{13}\\
   \lambda^5=\,\,&\der u^5+ x^1 \der x^{13}+ x^2 \der x^{14}- x^3 \der x^{15}- x^4 \der x^{16}+  x^5 \der x^{9}+ x^6 \der x^{10}- x^7 \der x^{11}- x^8 \der x^{12}\\
   \lambda^6=\,\,&\der u^6+ x^1 \der x^{14}- x^2 \der x^{13}- x^3 \der x^{16}+ x^4 \der x^{15}-  x^5 \der x^{10}+ x^6 \der x^{9}+ x^7 \der x^{12}- x^8 \der x^{11}\\
   \lambda^7=\,\,&\der u^7+ x^1 \der x^{15}- x^2 \der x^{16}+ x^3 \der x^{13}- x^4 \der x^{14}+  x^5 \der x^{11}- x^6 \der x^{12}+ x^7 \der x^{9}- x^8 \der x^{10}\\
   \lambda^8=\,\,&\der u^8 - x^1 \der x^{16}- x^2 \der x^{15}- x^3 \der x^{14}- x^4 \der x^{13}-  x^5 \der x^{12}- x^6 \der x^{11}- x^7 \der x^{10}- x^8 \der x^{9}.
   \end{aligned}$$
   
      The rank 16 distribution ${\mathcal D}$ on $M$ defined as ${\mathcal D}=\{\mathrm{T}\bbR^{24}\ni X\,\,|\,\,X\hook\lambda^1=\dots=X\hook\lambda^8=0\}$ has its Lie algebra of infinitesimal authomorphisms $\mathfrak{aut}(\mathcal D)$ isomorphic to the Tanaka prolongation of $\mathfrak{n}_{\minu}=R\oplus S$, where $(\rho,S=\bbR^{16})$ is the Dirac spinors representation \eqref{dir44} of $\mathfrak{n}_{00}=\mathfrak{co}(8,0)$, and $(\tau,R=\bbR^8)$ is the vectorial representation \eqref{tauweyl} of $\mathfrak{n}_{00}$.

  The symmetry algebra $\mathfrak{aut}({\mathcal D})$ is isomorphic to the simple exceptional  Lie algebra $\mathfrak{e}_{IV}$,
  $$\mathfrak{aut}({\mathcal D})=\mathfrak{e}_{IV},$$
  having the following natural gradation 
  $$\mathfrak{aut}({\mathcal D})=\mathfrak{n}_{-2}\oplus\mathfrak{n}_{-1}\oplus\mathfrak{n}_0\oplus\mathfrak{n}_1\oplus\mathfrak{n}_2,$$
  with $\mathfrak{n}_{-2}=R$, $\mathfrak{n}_{-1}=S=S_+\oplus S_-$,
  $$
    \mathfrak{n}_0=\bbR\oplus\mathfrak{co}(8,0)\supset \mathfrak{n}_{00},$$
    $\mathfrak{n}_{1}=S^*$, $\mathfrak{n}_{2}=R^*$,
    and with the spaces $S_\pm$ being the Carrier spaces for the Weyl spinors representations $\rho_\pm$ of $\mathfrak{co}(8,0)$.
 The gradation in $e_{iV}$ is inherited from the distribution structure $(M,{\mathcal D})$. The duality signs $*$ at $R^*$ and $S^*$ above are with respect to the Killing form in $\mathfrak{e}_{IV}$.

 The contactification $(M,{\mathcal D})$ is locally the flat model for the parabolic geometry of type $(E_{IV},P_{IV})$ related to the following \emph{crossed} Satake diagram   \tikzset{/Dynkin diagram/fold style/.style={stealth-stealth,thin,
    shorten <=1mm,shorten >=1mm}}\begin{dynkinDiagram}[edge length=.5cm]{E}{t****t}
\end{dynkinDiagram}.
\end{theorem}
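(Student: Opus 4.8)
The plan is to follow, essentially verbatim, the route used for Theorem \ref{diste6}: the only structural difference is that the relevant real form of $\soa(8,\bbC)$ is now the \emph{compact} form $\soa(8,0)$ in place of the split form $\soa(4,4)$. Concretely I would argue in three steps. First, I would check that the eight $2$-forms \eqref{2form4} encode a solution $\omega\in\mathrm{Hom}(\bigwedge^2S,R)$ of the magical equation \eqref{maga} for the representations $(\rho,S=\bbR^{16})$ of \eqref{dir80} and $(\tau,R=\bbR^8)$ of \eqref{so8} of $\mathfrak{n}_{00}=\mathfrak{co}(8,0)$. Since the $1$-forms $\lambda^i$ in the statement are exactly $\lambda^i=\der u^i+\omega^i{}_{\mu\nu}x^\mu\der x^\nu$ with $\omega^i=\tfrac12\omega^i{}_{\mu\nu}\der x^\mu\dz\der x^\nu$ the forms \eqref{2form4}, this places us in the setting of Corollary \ref{cruco}, so $\mathfrak{aut}(\mathcal D)\simeq\mathfrak{g}(\mathfrak{n}_\minu)$ for $\mathfrak{n}_\minu=R\oplus S$ the $2$-step nilpotent algebra of Lemma \ref{l21}. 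The verification of \eqref{magb}, namely $\rho(A_I)^\alpha{}_\mu\,\omega^i{}_{\alpha\nu}+\rho(A_I)^\alpha{}_\nu\,\omega^i{}_{\mu\alpha}=\tau(A_I)^i{}_j\,\omega^j{}_{\mu\nu}$ for all $29$ generators $A_I$, is a finite linear computation; it is most transparent in the Weyl split $\rho=\rho_+\oplus\rho_-$ of \eqref{weylso8}, and one reads off that \eqref{2form4} is (essentially uniquely) a solution.

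The substantive step is the computation of the Tanaka prolongation $\mathfrak{g}(\mathfrak{n}_\minu)$. I would first determine its zeroth piece: by Lemma \ref{l213}, $\mathfrak{n}_{00}=\mathfrak{co}(8,0)$ is already a subalgebra of $\mathfrak{n}_0$, and I would show the full $\mathfrak{n}_0$ equals $2\bbR\oplus\soa(8,0)=\bbR\oplus\mathfrak{co}(8,0)$, of dimension $30$. The efficient device, parallel to the $\mathfrak{e}_I$ discussion around \eqref{4formfiii}, is an $\bbR\oplus\soa(8,0)$-invariant $4$-form $\Phi=h_{ij}\,\omega^i\dz\omega^j$ on $S=\bbR^{16}$, with $(h_{ij})$ the appropriate definite symmetric $8\times8$ matrix: one checks that the stabilizer of $\Phi$ in $\gla(16,\bbR)$ is exactly $\bbR\oplus\mathfrak{co}(8,0)$ acting through $\rho$, and that every strata-preserving derivation of $\mathfrak{n}_\minu$ restricts on $\mathfrak{n}_{-1}=S$ to such a stabilizing endomorphism, since it must carry $\mathrm{Im}(\omega)=R$ to itself compatibly with $\Phi$. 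I would then prolong upward via \eqref{gt}, checking that $\mathfrak{n}_1\simeq S^*$ and $\mathfrak{n}_2\simeq R^*$ as $\mathfrak{n}_0$-modules and that $\mathfrak{n}_3=0$, so the prolongation is finite and $|2|$-graded. The dimension count $8+16+30+16+8=78=\dim_\bbR\mathfrak{e}_6$ pins down the complexification, and since $\mathfrak{n}_0$ contains the \emph{compact} $\soa(8,0)$ and the grading is the one read off from the crossed Satake diagram in the statement, the real form must be $\mathfrak{e}_{IV}$, with the asserted graded components (duals taken with respect to the Killing form of $\mathfrak{e}_{IV}$). The flat-model statement is then immediate from Theorem \ref{tansym}: $M$ is locally the nilpotent Lie group of $\mathfrak{n}_\minu$, $\mathcal D$ its canonical rank-$16$ distribution, and $\mathfrak{aut}(\mathcal D)\simeq\mathfrak{e}_{IV}$ exhibits $(M,\mathcal D)$ as the flat model of the parabolic geometry of type $(E_{IV},P_{IV})$.

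The hard part is the identification $\mathfrak{n}_0=\bbR\oplus\mathfrak{co}(8,0)$ together with the termination of the prolongation at $\mathfrak{n}_2$ — equivalently, ruling out additional infinitesimal symmetries of $\mathcal D$. The $4$-form argument reduces this to a stabilizer computation in $\gla(16,\bbR)$, which is delicate precisely because of the signs distinguishing $\rho_+$ from $\rho_-$ in \eqref{weylso8}. A cleaner alternative, which I would probably adopt, is to sidestep the direct prolongation entirely: build the $|2|$-grading of $\mathfrak{e}_{IV}$ abstractly from the crossed Satake diagram, observe that its $\mathfrak{n}_0$ is $\bbR\oplus\mathfrak{co}(8,0)$ acting on $\mathfrak{n}_{-1}\oplus\mathfrak{n}_{-2}$ through precisely the triality package $(\rho_+\oplus\rho_-)\oplus\tau$ of the previous section, and invoke the standard fact that a simple $|k|$-graded Lie algebra is the Tanaka prolongation of its negative part. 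The only thing then left to verify is that the brackets $[\mathfrak{n}_{-1},\mathfrak{n}_{-1}]\subset\mathfrak{n}_{-2}$ of this abstract grading are reproduced by the $2$-forms \eqref{2form4}, which is once more the magical-equation check of the first step.
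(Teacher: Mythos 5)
Your proposal is correct and follows essentially the same route as the paper: verify that the forms \eqref{2form4} solve the magical equation \eqref{maga} for the pair $(\rho,\tau)$ of \eqref{dir80} and \eqref{so8}, invoke Corollary \ref{cruco} and Tanaka's Theorem \ref{tansym}, and identify the prolongation with $\mathfrak{e}_{IV}$ via the abstract $|2|$-grading read off from the crossed Satake diagram together with the triality decomposition $\mathfrak{n}_{\minu}=\mathfrak{n}_{\minu2}\oplus\mathfrak{n}_{\minu1+}\oplus\mathfrak{n}_{\minu1-}$ established in the ``generalities'' subsection. Your ``cleaner alternative'' is in fact what the paper implicitly does, and your remarks on the $4$-form stabilizer mirror the paper's closing remark on $\Phi=h_{ij}\omega^i\dz\omega^j$ with $(h_{ij})$ the identity.
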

\begin{remark} 
Again we have a description of the relevant representations in terms of  an $\bbR\oplus\mathfrak{co}(8,0)$ invariant 4-form $\Phi$ in $S=\bbR^{16}$. 
  Now $\Phi$ may be represented by:
$$\Phi=h_{ij}\omega^i\dz\omega^j,$$
where $\omega^i$ are given by \eqref{2form4} and
$$\big(\,\,h_{ij}\,\,\big)\,\,=\,\,
\bma
1&0&0&0&0&0&0&0\\0&1&0&0&0&0&0&0\\0&0&1&0&0&0&0&0\\0&0&0&1&0&0&0&0\\0&0&0&0&1&0&0&0\\0&0&0&0&0&1&0&0\\0&0&0&0&0&0&1&0\\0&0&0&0&0&0&0&1 \ema.
$$
This 4-form is such that its stabilizer in $\gla(16,\bbR)$ is $\mathfrak{n}_0=\bbR\oplus\mathfrak{co}(8,0)$. When restricted to $\mathfrak{n}_{00}=\mathfrak{co}(8,0)$ this stabilizer is given precisely in the Mayorana Dirac spinor representation
$$\rho=\rho_+\oplus\rho_-$$
as in \eqref{dir80}-\eqref{weylso8}. 
\end{remark}
\section{Application: one more realization of $\mathfrak{e}_6$ and a realization of $\mathfrak{b}_6$}
Between the 24-dimensional realizations of $\mathfrak{e}_6$ mentioned in this paper, and  Cartan's 16-dimensional realization of $\mathfrak{e}_6$ associated with the grading\tikzset{/Dynkin diagram/fold style/.style={stealth-stealth,thin,
    shorten <=1mm,shorten >=1mm}}\begin{dynkinDiagram}[edge length=.5cm]{E}{ooooot}\end{dynkinDiagram}, there are 21-dimensional realizations of this algebra $\mathfrak{e}_6$ associated with the following Dynkin diagram crossing
\tikzset{/Dynkin diagram/fold style/.style={stealth-stealth,thin,
    shorten <=1mm,shorten >=1mm}}\begin{dynkinDiagram}[edge length=.5cm]{E}{otoooo}\end{dynkinDiagram}. These define contact $\mathfrak{e}_6$ geometries and are described in \cite{CS} p. 425-426.

\subsection{Realization of $\mathfrak{e}_I$ in dimension 25} 
Here we will briefly discuss yet another realization, now in dimension 25, corresponding to the following Dynkin diagram crossing: \tikzset{/Dynkin diagram/fold style/.style={stealth-stealth,thin,
    shorten <=1mm,shorten >=1mm}}\begin{dynkinDiagram}[edge length=.5cm]{E}{ooooto}\end{dynkinDiagram} of $\mathfrak{e}_6$. This is for example mentioned in \cite{weyman}. Looking at the Satake diagrams of real forms of $\mathfrak{e}_6$, we see that this realization is only possible for the real form $\mathfrak{e}_I$.

So we again use our Corollary \ref{cruco} with now $\mathfrak{n}_{00}=\sla(2,\bbR)\oplus\sla(5,\bbR)$ and with representations $(\rho,S)$ and $(\tau,R)$, as indicated in \cite{weyman} Section 5.3, 
$S=\bbR^2\otimes\bigwedge^2\bbR^5,\quad R=\bigwedge^2\bbR^2\otimes\bigwedge^4\bbR^5.$

To be more explicit we obtain this representations as follows:
\begin{itemize}
\item We start with the defining representations $\tau_2$ of $\sla(2,\bbR)$ in $\bbR^2$ and $\tau_5$ of $\sla(5,\bbR)$ in $\bbR^5$, and define the representation $$\rho=\tau_2\otimes\big(\tau_5\dz\tau_5\big)\quad\mathrm{of}\quad \sla(2,\bbR)\oplus\sla(5,\bbR)\quad\mathrm{ in}\quad \textstyle S=\bbR^2\otimes\bigwedge^2\bbR^5=\bbR^{20}.$$
  The representation $(\rho,S)$ is an irreducible real 20-dimensional representation  of
  $$\mathfrak{n}_{00}=\sla(2,\bbR)\oplus\sla(5,\bbR).$$
\item Then we decompose the $190$-dimensional representation $\rho\dz\rho$ onto the irreducibles:
  $$\textstyle \rho\dz\rho=\alpha\oplus\tau\oplus\beta\quad\mathrm{in}\quad\bigwedge_{50}\oplus R\oplus\bigwedge_{135},$$
  with $(\alpha,\bigwedge_{50})$ being 50-dimensional, $(\tau,R)$ being 5-dimensional, and $(\beta,\bigwedge_{135})$ being $135$-dimensional.
  \item We take the 20-dimensional representation $(\rho,S)$ and the $5$-dimensional representation $(\tau,R)$ of $\mathfrak{n}_{00}=\sla(2,\bbR)\oplus\sla(5,\bbR)$ as above, and apply our Corollary \ref{cruco}.
    \end{itemize}

%../E6_weyman_E1.nb
We obtain the following theorem.
\begin{theorem}
   Let $M=\bbR^{25}$ with coordinates $(u^1,\dots,u^5,x^1,\dots ,x^{20})$, and consider five 1-forms $\lambda^1,\dots,\lambda^5$ on $M$ given by
 $$\begin{aligned}
   \lambda^1=\,\,&\der u^1- x^3\der x^{20}+ x^5\der x^{19}- x^6 \der x^{18}- x^8 \der x^{16}+ x^9 \der x^{15}- x^{10} \der x^{13}\\
   \lambda^2=\,\,&\der u^2- x^2 \der x^{20}+ x^4 \der x^{19}- x^6 \der x^{17}- x^7 \der x^{16}+  x^9 \der x^{14}- x^{10} \der x^{12}\\
   \lambda^3=\,\,&\der u^3- x^1 \der x^{20}+ x^4 \der x^{18}-x^5 \der x^{17}- x^7 \der x^{15}+  x^8 \der x^{14}- x^{10} \der x^{11}\\
   \lambda^4=\,\,&\der u^4- x^1 \der x^{19}+ x^2 \der x^{18}- x^3 \der x^{17}- x^7 \der x^{13}+  x^8 \der x^{12}- x^9 \der x^{11}\\
   \lambda^5=\,\,&\der u^5- x^1 \der x^{16}+ x^2 \der x^{15}- x^3 \der x^{14}- x^4 \der x^{13}+  x^5 \der x^{12}- x^6 \der x^{11}.
   \end{aligned}$$
         The rank 20 distribution ${\mathcal D}$ on $M$ defined as ${\mathcal D}=\{\mathrm{T}\bbR^{25}\ni X\,\,|\,\,X\hook\lambda^1=\dots=X\hook\lambda^5=0\}$ has its Lie algebra of infinitesimal authomorphisms $\mathfrak{aut}(\mathcal D)$ isomorphic to the Tanaka prolongation of $\mathfrak{n}_{\minu}=R\oplus S$, where $(\rho,S=\bbR^{20})$ is the 20-dimensional irreducible representation of $\mathfrak{n}_{00}=\sla(2,\bbR)\oplus\sla(5,\bbR)$, and $(\tau,R=\bbR^5)$ is the 5-dimensional irreducible subrepresentation $\tau\in(\rho\dz\rho)$ of $\mathfrak{n}_{00}$.

  The symmetry algebra $\mathfrak{aut}({\mathcal D})$ is isomorphic to the simple exceptional  Lie algebra $\mathfrak{e}_{I}$,
  $$\mathfrak{aut}({\mathcal D})=\mathfrak{e}_{I},$$
  having the following natural gradation 
  $$\mathfrak{aut}({\mathcal D})=\mathfrak{n}_{-2}\oplus\mathfrak{n}_{-1}\oplus\mathfrak{n}_0\oplus\mathfrak{n}_1\oplus\mathfrak{n}_2,$$
  with $\mathfrak{n}_{-2}=R$, $\mathfrak{n}_{-1}=S$,
  $$
    \mathfrak{n}_0=\bbR\oplus\sla(2,\bbR)\oplus\sla(5,\bbR)\supset \mathfrak{n}_{00},$$
    $\mathfrak{n}_{1}=S^*$, $\mathfrak{n}_{2}=R^*$.
 The gradation in $e_I$ is inherited from the distribution structure $(M,{\mathcal D})$. The duality signs $*$ at $R^*$ and $S^*$ above are with respect to the Killing form in $\mathfrak{e}_{I}$.

 The contactification $(M,{\mathcal D})$ is locally the flat model for the parabolic geometry of type $(E_{I},P_{I*})$ related to the following \emph{crossed} Satake diagram   \tikzset{/Dynkin diagram/fold style/.style={stealth-stealth,thin,
    shorten <=1mm,shorten >=1mm}}\begin{dynkinDiagram}[edge length=.5cm]{E}{ooooto}
\end{dynkinDiagram}.
\end{theorem}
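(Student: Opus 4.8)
The plan is to obtain the theorem from Corollary~\ref{cruco} in exactly the way Theorems~\ref{distf1}, \ref{distf2} and \ref{diste6} were obtained. Two ingredients must be supplied: a solution $\omega\in\mathrm{Hom}(\bigwedge^2S,R)$ of the magical equation~\eqref{maga} with $\mathrm{Im}(\omega)=R$ that reproduces the displayed forms $\lambda^i$, and a computation of the Tanaka prolongation of the resulting $2$-step nilpotent Lie algebra $\mathfrak{n}_\minu=R\oplus S$ identifying it with $\mathfrak{e}_I$.

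For the first ingredient I would fix a basis $\{e_a\}$ of $\bbR^2$ and the basis $\{e_a\dz e_b\}$ of $\bigwedge^2\bbR^5$, write $\rho=\tau_2\otimes(\tau_5\dz\tau_5)$ on the $20$-dimensional space $S=\bbR^2\otimes\bigwedge^2\bbR^5$ in these bases, and decompose the $190$-dimensional representation $\rho\dz\rho$ on $\bigwedge^2S$ into its $\mathfrak{n}_{00}$-irreducible constituents. Using $\bigwedge^2\big(\bbR^2\otimes\bigwedge^2\bbR^5\big)=\big(\bigwedge^2\bbR^2\otimes S^2\bigwedge^2\bbR^5\big)\oplus\big(S^2\bbR^2\otimes\bigwedge^2\bigwedge^2\bbR^5\big)$ together with $S^2\bigwedge^2\bbR^5=\bigwedge^4\bbR^5\oplus\bigwedge_{50}$, one reads off $\bigwedge^2S=R\oplus\bigwedge_{50}\oplus\bigwedge_{135}$ with $R=\bigwedge^2\bbR^2\otimes\bigwedge^4\bbR^5\cong(\bbR^5)^{*}$ the unique $5$-dimensional summand, carrying the representation $\tau$. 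Since $R$ is a direct summand of $\bigwedge^2S$, the $\mathfrak{n}_{00}$-equivariant projection $\pi\colon\bigwedge^2S\to R$ satisfies, for the action $A\cdot(X\dz Y)=\rho(A)X\dz Y+X\dz\rho(A)Y$, the identity $\pi\big(\rho(A)X\dz Y+X\dz\rho(A)Y\big)=\tau(A)\pi(X\dz Y)$, which \emph{is} the magical equation~\eqref{maga} for $\omega:=\pi$; moreover $\omega$ is onto, so $\mathrm{Im}(\omega)=R$. Writing $\omega=\tfrac12\omega^i{}_{\mu\nu}e_i\otimes f^\mu\dz f^\nu$ in these bases produces five $2$-forms $\omega^i$ on $N=\bbR^{20}$, and $\lambda^i=\der u^i+\omega^i{}_{\mu\nu}x^\mu\der x^\nu$ then gives the Pfaffian system of the statement; matching it to the displayed $\lambda^1,\dots,\lambda^5$ is a direct (if tedious) coordinate check.

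For the second ingredient, Corollary~\ref{cruco} already gives $\mathfrak{aut}(\mathcal D)\simeq\mathfrak{g}(\mathfrak{n}_\minu)$ with $\mathfrak{n}_{00}\subset\mathfrak{n}_0$, and $\mathrm{Im}(\omega)=R$ makes $\mathfrak{n}_\minu$ bracket generating. I would first show $\mathfrak{n}_0=\bbR\oplus\sla(2,\bbR)\oplus\sla(5,\bbR)$ (of dimension $28$): a strata-preserving derivation is a pair $(D_S,D_R)$ with $D_R\,\omega(X,Y)=\omega(D_SX,Y)+\omega(X,D_SY)$, and $D_R$ is determined by $D_S$ because $\omega$ is onto, so one must check that the stabilizer of $\omega$ in $\gla(20,\bbR)$ is $\rho(\mathfrak{n}_{00})\oplus\bbR\,\mathrm{Id}$. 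This is a finite linear-algebra problem of the same type as the $4$-form computations of Sections~\ref{phiform}--\ref{42}; alternatively it follows from Schur's lemma applied to $\bigwedge^2S=R\oplus\bigwedge_{50}\oplus\bigwedge_{135}$ together with the fact that $\omega$ is nonzero only on the $R$-summand. Then I would verify $\mathfrak{n}_1\cong S^{*}$, $\mathfrak{n}_2\cong R^{*}$ and $\mathfrak{n}_3=0$, so that $\mathfrak{g}(\mathfrak{n}_\minu)=R\oplus S\oplus\mathfrak{n}_0\oplus S^{*}\oplus R^{*}$ has dimension $5+20+28+20+5=78$.

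Finally, to identify this $78$-dimensional graded Lie algebra with $\mathfrak{e}_I$ I would observe that $\mathfrak{n}_\minu$, as an $\mathfrak{n}_{00}$-module with its bracket, is isomorphic to the negative part $\mathfrak{g}_{-2}\oplus\mathfrak{g}_{-1}$ of the $|2|$-grading of $\mathfrak{e}_6$ determined by the distinguished node of the crossed Satake diagram in the statement, where $\mathfrak{g}_0^{ss}=\sla(2)\oplus\sla(5)$, $\mathfrak{g}_{-1}=\bbR^2\otimes\bigwedge^2\bbR^5$ and $\mathfrak{g}_{-2}=\bigwedge^2\bbR^2\otimes\bigwedge^4\bbR^5$. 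Because this grading is \emph{fundamental} (it comes from a single simple root) and $\mathfrak{e}_6$ is simple of a type for which fundamental gradings are prolongation-rigid (cf.\ \cite{Alt,Krug} and Tanaka's theorem), the Tanaka prolongation of such an $\mathfrak{n}_\minu$ is $\mathfrak{e}_6$ itself; the real form is then forced to be the split one, $\mathfrak{e}_I$, since $\mathfrak{n}_{00}=\sla(2,\bbR)\oplus\sla(5,\bbR)$ is split and, as one sees from the Satake diagrams listed at the start of the section, $\mathfrak{e}_I$ is the only real form of $\mathfrak{e}_6$ admitting a parabolic with this Levi factor. The flat-model and parabolic-geometry assertions are then immediate from Tanaka's Theorem~\ref{tansym}. \textbf{The step I expect to be the real obstacle is the prolongation computation}: proving that $\mathfrak{n}_0$ is no larger than $\bbR\oplus\sla(2,\bbR)\oplus\sla(5,\bbR)$ and that the prolongation terminates in degree $2$ — i.e.\ the prolongation-rigidity of this particular grading — is the only genuinely structural ingredient, everything else being explicit linear algebra.
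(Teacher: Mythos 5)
Your proposal is correct and follows essentially the same route as the paper: the paper's own justification is exactly the recipe you describe — take $\rho=\tau_2\otimes(\tau_5\dz\tau_5)$ on $S=\bbR^2\otimes\bigwedge^2\bbR^5$, extract the unique $5$-dimensional summand $R=\bigwedge^2\bbR^2\otimes\bigwedge^4\bbR^5$ of $\rho\dz\rho$ so that the equivariant projection solves \eqref{maga}, contactify, and invoke Corollary \ref{cruco} together with the known $|2|$-grading of $\mathfrak{e}_6$ at the crossed node and the Satake-diagram argument singling out $\mathfrak{e}_I$. Your added remarks (that the magical equation holds automatically for the projection onto a direct summand, and that the only structural input is the rigidity of the prolongation for this fundamental grading, available from \cite{CS}) are refinements of, not departures from, the paper's argument.
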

\subsection{A realization of $\soa(7,6)$ in dimension 21} We know from\cite{CS} that the crossed Satake diagram \tikzset{/Dynkin diagram/fold style/.style={stealth-stealth,thin,
    shorten <=1mm,shorten >=1mm}}\begin{dynkinDiagram}[edge length=.5cm]{E}{otoooo}\end{dynkinDiagram} corresponds to the $\mathfrak{e}_I$-symmetric contact geometry in dimension 21. It corresponds to the grading
$$\mathfrak{e}_I=\mathfrak{n}_{-2}\oplus\mathfrak{n}_{-1}\oplus\mathfrak{n}_0\oplus\mathfrak{n}_1\oplus\mathfrak{n}_2,$$
with $\dim(\mathfrak{n}_{\pm 1})=20$, $\dim(\mathfrak{n}_{\pm2})=1$ and $\mathfrak{n}_0=\gla(6,\bbR)$.

%../E6_EI_with_6_15.nb
Interestingly dimension $n=78$ is the dimension not only of the exceptional simple Lie algebra $\mathfrak{e}_6$, but also for the  \emph{simple} Lie algebras $\mathfrak{b}_6$ and $\mathfrak{c}_6$. For example, if we take the crossed Satake diagram \tikzset{/Dynkin diagram/fold style/.style={stealth-stealth,thin,
    shorten <=1mm,shorten >=1mm}}\begin{dynkinDiagram}[edge length=.5cm]{B}{ooooot}\end{dynkinDiagram} we describe the following gradation
$$\soa(7,6)=\mathfrak{n}_{-2}\oplus\mathfrak{n}_{-1}\oplus\mathfrak{n}_0\oplus\mathfrak{n}_1\oplus\mathfrak{n}_2,$$
with $\dim(\mathfrak{n}_{\pm 1})=6$, $\dim(\mathfrak{n}_{\pm2})=15$ and $\mathfrak{n}_0=\gla(6,\bbR)$, in the simple Lie algebra $\soa(7,6)$. Here, taking  $(\rho,S)$ as the defining representation $\rho(A)=A$ of $\glg(6,\bbR)$ in $S=\bbR^6$, and taking the representation $(\tau,R)$ to be  $\tau=\rho\dz\rho$ in $R=\bigwedge^2\bbR^6=\bbR^{15}$, and applying our Corollary \ref{cruco} we get the following theorem\footnote{We invoke it, just to show that we do not only use spin representations in this paper.}.
\begin{theorem}
   Let $M=\bbR^{21}$ with coordinates $(u^1,\dots,u^{15},x^1,\dots ,x^{6})$, and consider fifteen 1-forms $\lambda^1,\dots,\lambda^5$ on $M$ given by
 $$
   \lambda^{I(i,j)}=\,\,\der u^{I(i,j)}- x^i\der x^{j},$$
   with $$I(i,j)=1+i+\tfrac12(j-3)j,\quad 1\leq i<j\leq 6.$$
         The rank 6 distribution ${\mathcal D}$ on $M$ defined as ${\mathcal D}=\{\mathrm{T}\bbR^{21}\ni X\,\,|\,\,X\hook\lambda^1=\dots=X\hook\lambda^{15}=0\}$ has its Lie algebra of infinitesimal authomorphisms $\mathfrak{aut}(\mathcal D)$ isomorphic to the Tanaka prolongation of $\mathfrak{n}_{\minu}=R\oplus S$, where $(\rho,S=\bbR^{6})$ is the 6-dimensional defining representation of $\mathfrak{n}_{00}=\gla(6,\bbR)$, and $(\tau,R=\bigwedge^2\bbR^6)$ is the $15$-dimensional irreducible subrepresentation $\tau=\rho\dz\rho$ of $\mathfrak{n}_{00}$.

  The symmetry algebra $\mathfrak{aut}({\mathcal D})$ is isomorphic to the simple exceptional  Lie algebra $\soa(7,6)$,
  $$\mathfrak{aut}({\mathcal D})=\soa(7,6),$$
  having the following natural gradation 
  $$\mathfrak{aut}({\mathcal D})=\mathfrak{n}_{-2}\oplus\mathfrak{n}_{-1}\oplus\mathfrak{n}_0\oplus\mathfrak{n}_1\oplus\mathfrak{n}_2,$$
  with $\mathfrak{n}_{-2}=R$, $\mathfrak{n}_{-1}=S$,
  $$
    \mathfrak{n}_0=\gla(6,\bbR)= \mathfrak{n}_{00},$$
    $\mathfrak{n}_{1}=S^*$, $\mathfrak{n}_{2}=R^*$.
 The gradation in $\soa(7,6)$ is inherited from the distribution structure $(M,{\mathcal D})$. The duality signs $*$ at $R^*$ and $S^*$ above are with respect to the Killing form in $\soa(7,6)$.

 The contactification $(M,{\mathcal D})$ is locally the flat model for the parabolic geometry of type $(\soa(7,6),P)$ related to the following \emph{crossed} Satake diagram   \tikzset{/Dynkin diagram/fold style/.style={stealth-stealth,thin,
    shorten <=1mm,shorten >=1mm}}\begin{dynkinDiagram}[edge length=.5cm]{B}{ooooot}
\end{dynkinDiagram}.
\end{theorem}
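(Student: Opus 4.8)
The plan is to reduce the statement to Corollary~\ref{cruco} plus the computation of a single Tanaka prolongation, exactly as for the other distributions treated in the paper.

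\emph{Step 1: the magical equation.} Take $S=\bbR^6$ with the defining representation $\rho(A)=A$ of $\mathfrak{n}_{00}=\gla(6,\bbR)$, take $R=\bigwedge^2\bbR^6=\bbR^{15}$ with $\tau=\rho\dz\rho$, i.e. $\tau(A)(X\dz Y)=(AX)\dz Y+X\dz(AY)$, and take $\omega\colon\bigwedge^2S\to R$ to be the wedge product $\omega(X,Y)=X\dz Y$. Then the magical equation \eqref{maga} reads
$$(AX)\dz Y+X\dz(AY)=\tau(A)\,(X\dz Y),$$
which holds by the very definition of $\tau$; and $\omega$ is onto, since the $\der x^i\dz\der x^j$ ($1\le i<j\le6$) span $R$ — these are (up to normalization) exactly the $r=15$ two-forms $\omega^{I(i,j)}=\der\lambda^{I(i,j)}$ underlying the $1$-forms $\lambda^{I(i,j)}$ in the statement. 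Hence we are in the situation of Corollary~\ref{cruco}: $\mathfrak{aut}(\mathcal D)$ is isomorphic to the Tanaka prolongation $\mathfrak{g}(\mathfrak{n}_\minu)$ of the $2$-step nilpotent Lie algebra $\mathfrak{n}_\minu=R\oplus S=\bigwedge^2\bbR^6\oplus\bbR^6$ with bracket $[X,Y]=X\dz Y$, $[\mathfrak{n}_{-2},\mathfrak{n}_\minu]=0$, and $\gla(6,\bbR)=\mathfrak{n}_{00}\subset\mathfrak{n}_0$.

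\emph{Step 2: identifying $\mathfrak{n}_\minu$ as a parabolic symbol.} On $V=\bbR^{13}$ with a split symmetric bilinear form choose a maximal isotropic subspace $W$ of dimension $6$, an isotropic complement $W^*$ in duality with $W$, and a complementary anisotropic line $\langle e_0\rangle$, so $V=W\oplus\langle e_0\rangle\oplus W^*$. The grading element of $\soa(V)=\soa(7,6)$ acting as $+1$ on $W$, $0$ on $e_0$, $-1$ on $W^*$ induces a $|2|$-grading $\soa(7,6)=\mathfrak{g}_{-2}\oplus\mathfrak{g}_{-1}\oplus\mathfrak{g}_0\oplus\mathfrak{g}_1\oplus\mathfrak{g}_2$ with $\mathfrak{g}_0=\gla(W)\oplus\soa(\langle e_0\rangle)=\gla(6,\bbR)$, $\mathfrak{g}_{-1}\cong W^*\cong\bbR^6$ the standard $\mathfrak{g}_0$-module, $\mathfrak{g}_{-2}\cong\bigwedge^2W^*\cong\bbR^{15}$, and bracket $\mathfrak{g}_{-1}\times\mathfrak{g}_{-1}\to\mathfrak{g}_{-2}$ the wedge; this is the parabolic obtained by crossing the last (short) node of the $B_6$ Satake diagram displayed in the statement. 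Up to the automorphism $A\mapsto-A^T$ of $\gla(6,\bbR)$, its negative part $\mathfrak{g}_{-2}\oplus\mathfrak{g}_{-1}$ is isomorphic, as a graded Lie algebra with its $\gla(6,\bbR)$-action, to our $\mathfrak{n}_\minu$; and since the displayed crossed diagram is that of the \emph{split} real form, the form on $V$ indeed has signature $(7,6)$, which is anyway forced by the presence of the real split $\gla(6,\bbR)$ as $\mathfrak{g}_0$.

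\emph{Step 3: prolongation rigidity, and the main obstacle.} It remains to show that the full Tanaka prolongation of $\mathfrak{n}_\minu$ is exactly $\soa(7,6)$, i.e. $\mathfrak{n}_0=\gla(6,\bbR)$, $\mathfrak{n}_1=S^*$, $\mathfrak{n}_2=R^*$, $\mathfrak{n}_{\ge3}=0$; this is the only nontrivial point. A quick route: by Tanaka's Theorem~\ref{tansym}, $\mathfrak{g}(\mathfrak{n}_\minu)$ is the automorphism algebra of the flat model of the parabolic geometry $(\soa(7,6),P)$ of Step~2, and by the standard prolongation theory for simple graded Lie algebras, since this $|2|$-grading is not of contact type ($\dim\mathfrak{g}_{-2}=15\ne1$) and $\mathfrak{n}_\minu$ is not abelian, that prolongation coincides with $\soa(7,6)$ with the grading above. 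A self-contained route is to compute directly: a strata-preserving derivation is determined by its restriction $D_{-1}\in\gla(S)$ because $\omega$ is onto, via $D_{-2}(X\dz Y)=D_{-1}X\dz Y+X\dz D_{-1}Y$, and conversely every $D_{-1}\in\gla(6,\bbR)$ extends, so $\mathfrak{n}_0=\gla(6,\bbR)=\mathfrak{n}_{00}$; a short finite-dimensional computation then identifies $\mathfrak{n}_1$ with $S^*$, $\mathfrak{n}_2$ with $R^*$, shows $\mathfrak{n}_3=0$, and the resulting $78$-dimensional graded Lie algebra is semisimple of rank $6$ with $\mathfrak{n}_0=\gla(6,\bbR)$, hence the split real form of $B_6$, namely $\soa(7,6)$. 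Combining Steps~1--3 gives $\mathfrak{aut}(\mathcal D)\cong\soa(7,6)$ with the stated gradation, and exhibits $(M,\mathcal D)$ as the flat model of the parabolic geometry of type $(\soa(7,6),P)$. The hard part is precisely the prolongation rigidity in Step~3 — either citing the classification of prolongations of simple graded Lie algebras while checking we avoid the exceptional (contact, abelian) cases, or grinding the linear algebra for $\mathfrak{n}_1,\mathfrak{n}_2$; everything else is a two-line check of the magical equation together with the standard description of the $B_6$ parabolic whose nilradical is abelian-by-$\gla(6)$.
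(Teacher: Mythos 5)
Your proposal is correct and follows essentially the same route as the paper, which for this theorem merely says ``take $\rho$ the defining representation of $\gla(6,\bbR)$, $\tau=\rho\dz\rho$, and apply Corollary \ref{cruco}'': you check the magical equation for the wedge map, identify $\mathfrak{n}_\minu$ with the negative part of the $|2|$-grading of $B_6$ by the last (short) node, and invoke prolongation rigidity. In fact you supply more detail than the paper does on the one genuinely nontrivial point — that the Tanaka prolongation of $\mathfrak{n}_\minu$ is exactly $\soa(7,6)$ and nothing larger — correctly noting that this grading avoids the contact and depth-one exceptions of the Tanaka--Yamaguchi prolongation theory.
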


\begin{bibdiv}
  \begin{biblist}
 %  \bib{andreotti}{book}{
  %    author = {Andreotti, Aldo},
   %   author={Hill, Denson C.},
    % title = {Complex characteristic coordinates and tangential {Cauchy-Riemann} equations},
     %journal = {Annali della Scuola Normale Superiore di Pisa - Classe di Scienze},
%     pages = {299--324},
%     publisher = {Scuola normale superiore},
 %    volume = {Ser. 3, 26},
  %   number = {2},
   %  year = {1972},
    % zbl = {0256.32006},
    % mrnumber = {460724},
    % url = {http://www.numdam.org/item/ASNSP_1972_3_26_2_299_0/}
    %}

    \bib{AC}{article}
        {
          author={Dmitrij V. Alekseevsky},
          author={Vicente Cortes},
          title={Classification of N-(super)-extended Poincaré algebras and bilinear invariants of the spinor representation of Spin(p, q)},
          journal={Comm. Math. Phys.},
          volume={183(3)},
          pages={477–510},
          year={1997}
        }

        \bib{Alt}{article}
            {
              author={Andrea Altomani},
              author={Andrea Santi},
              title={Tanaka structures modeled on extended Poincaré algebras},
              journal={Indiana Univ. Math. Journ.},
              volume={63(1)},
              pages={91–117},
              year={2014}
              }
    \bib{bicquard}{article}
        {
author = { Olivier   Biquard },
title = {Quaternionic contact structures},
booktitle = {Quaternionic Structures in Mathematics and Physics},
pages = {23-30},
doi = {10.1142/9789812810038\_0003},
URL = {https://www.worldscientific.com/doi/abs/10.1142/9789812810038_0003},
eprint = {https://www.worldscientific.com/doi/pdf/10.1142/9789812810038_0003}
}
    
%\bib{cartan}{article}{
%author = {Cartan, \'Elie},
%journal = {Annales scientifiques de l'\'Ecole Normale Supérieure},
%language = {fre},
%pages = {109-192},
%publisher = {Elsevier},
%title = {Les systèmes de Pfaff à cinq variables et les équations aux dérivées partielles du second ordre},
%url = {http://eudml.org/doc/81284},
%volume = {27},
%year = {1910},
%}

\bib{CartanPhd}{article}{
   author={Cartan, \'Elie},
   title={\"Uber die einfachen Transformationsgruppen},
journal={Ber. Verh. k. Sachs. Ges. d. Wiss. Leipzig}
 date={1893},
 pages={395--420},
  }

\bib{CartanPhdF}{article}
    {
   author={Cartan, \'Elie},
   title={Sur la structure des grupes de transformations finis et continus},
journal={Oeuvres, 1}
 date={1894},
 pages={137-287},
}

\bib{CS}{book}{
   author={\v{C}ap, Andreas},
   author={Slov\'{a}k, Jan},
   title={Parabolic geometries. I},
   series={Mathematical Surveys and Monographs},
   volume={154},
   note={Background and general theory},
   publisher={American Mathematical Society, Providence, RI},
   date={2009},
   pages={x+628},
   isbn={978-0-8218-2681-2},
   review={\MR{2532439}},
   doi={10.1090/surv/154},
}

%\bib{chernmoser}{article}{
%  author = {S. S. Chern}
%  author = {J. K. Moser},
%title = {{Real hypersurfaces in complex manifolds}},
%volume = {133},
%journal = {Acta Mathematica},
%number = {none},
%publisher = {Institut Mittag-Leffler},
%pages = {219 -- 271},
%year = {1974},
%doi = {10.1007/BF02392146},
%URL = {https://doi.org/10.1007/BF02392146}
%}

%\bib{denny-gives}{article}{ }
\bib{He}{article}{
   author={Helgason, Sigurdur},
   title={Invariant differential equations on homogeneous manifolds},
   journal={BAMS},
   volume={83},
   date={1977},
   pages={751-756},
   }

%\bib{He}{book}{
%   author={Helgason, Sigurdur},
%   title={Differential geometry, Lie groups, and symmetric spaces},
%   series={Pure and Applied Mathematics},
%   volume={80},
%   publisher={Academic Press, Inc. [Harcourt Brace Jovanovich, Publishers],
%   New York-London},
%   date={1978},
%   pages={xv+628},
%   isbn={0-12-338460-5},
%   review={\MR{514561}},
%}

\bib{DJPZ}{article}{
 author = {Hill, C. Denson }
  author={Merker, Joël}
  author={Nie, Zhaohu}
  author={Nurowski, Paweł},
  title = {Accidental CR structures},
   publisher = {arXiv},
  year = {2023},
  doi = {10.48550/ARXIV.2302.03119},
  url = {https://arxiv.org/abs/2302.03119},
}

%\bib{MN1}{article}{
%   author={Medori, C.},
%   author={Nacinovich, M.},
%   title={Levi-Tanaka algebras and homogeneous CR manifolds},
%   journal={Compositio Math.},
%   volume={109},
%   date={1997},
%   number={2},
%   pages={195--250},
%   issn={0010-437X},
   %review={\MR{1478818}},
   %doi={10.1023/A:1000166119593},
%}

%\bib{medori}{article}{
%author={Medori, Costantino}
%author={Nacinovich, Mauro}
%title={Classification of semisimple Levi-Tanaka algebras}
%journal={Annali di Matematica Pura ed Applicata}
%pages={285-349}
%publisher={Springer}
%volume = {Vol. CLXXIV},
%     number = {IV},
%     year = {1998},
%     language={eng}
 %    url={https://doi.org/10.1007/BF01759376}
%}

%\bib{contactification}{article}{
%author={Nurowski, Pawel}
%title={Cartan's triality contactified to ${\bf E}_6$}
%journal={to be decided}
%}

%\bib{nurowskiG2CR}{article}{
%author={Nurowski, Pawel}
%title={Hunting for a $G_2$ snake}
%journal={Helga Conference}
%pages={285-349}
%publisher={Springer}
%volume = {Vol. CLXXIV},
%     number = {IV},
%     year = {201??},
%     language={eng}
 %    url={https://doi.org/10.1007/BF01759376}
%}
\bib{Krug}{article}
    {
      author={ M.G. Molina},
      author={ B. Kruglikov},
      author={I. Markina},
      author={A. Vasil’ev},
      title={Rigidity of 2-Step Carnot Groups},
      journal={Journ. Geom. Anal.},
      volume={28},
      pages={1477–1501},
      year={2018}
      }

\bib{weyman}{article}{
 author = {Kraśkiewicz, Witold}
 author={Weyman, Jerzy},
 title = {Geometry of orbit closures for the representations associated to gradings of Lie algebras of types $E_6$, $F_4$ and $G_2$},
 year = {2012},
 publisher = {arXiv},
 doi = {10.48550/ARXIV.1201.1102},
  url = {https://arxiv.org/abs/1201.1102} 
}

\bib{tanaka}{article}{
   author={Tanaka, Noboru},
   title={On differential systems, graded Lie algebras and pseudogroups},
journal={Journal of Mathematics of Kyoto University},
pages={1-82}
volume = {10},
     year = {1970},
}

\bib{traut}{book}{  % The book itself
    author    = {Trautman, Andrzej},            % Unique for book
    title={Clifford algebras and their representations},
   % booktitle = {Encyclopedia of Mathematical Physics vol. 1}, % Redundant for book
 year= {2006, http://trautman.fuw.edu.pl},       publisher = {in Encyclopedia of Mathematical Physics vol. 1, Elsevier},
editor= {J.-P. Françoise, G.L. Naber and Tsou S.T.},
 % Unique for book
    address   = {Oxford GB},
    pages={518-530},
    %url={http://trautman.fuw.edu.pl/publications/Papers-in-pdf/104.pdf},
}

%\bib{Yam}{article}{
  % author={Yamaguchi, Keizo},
  % title={Differential systems associated with simple graded Lie algebras},
  % conference={
   %   title={Progress in differential geometry},
   %},
   %book={
    %  series={Adv. Stud. Pure Math.},
     % volume={22},
      %publisher={Math. Soc. Japan, Tokyo},
  % },
  % date={1993},
  % pages={413--494},
   %review={\MR{1274961}},
   %doi={10.2969/aspm/02210413},
%}

\end{biblist}
\end{bibdiv}

\end{document}